\newtheorem{theorem}{Theorem}[section]
\newtheorem*{result}{Theorem}
\newtheorem{lemma}[theorem]{Lemma}
\newtheorem{proposition}[theorem]{Proposition}
\newtheorem{corollary}[theorem]{Corollary}
\theoremstyle{definition} \newtheorem{definition}[theorem]{Definition}
\theoremstyle{remark} \newtheorem{remark}[theorem]{Remark}
\numberwithin{equation}{section}
\newcommand{\cardone}{κ}
\newcommand{\cardtwo}{λ}
\newcommand{\cardthree}{μ}
\newcommand{\ordone}{α}
\newcommand{\ordtwo}{β}
\newcommand{\ordthree}{δ}
\newcommand{\ordfour}{γ}
\newcommand{\cardinal}[1]{\left\vert #1 \right\vert}
\newcommand{\card}[1]{\cardinal{#1}}
\newcommand{\order}[1]{ω_{#1}}
\newcommand{\cofinality}[1]{\operatorname{cf}(#1)}
\newcommand{\diag}[1]{\operatorname{diag}\!\left( #1 \right)}
\newcommand{\thelatticeweak}[1]{Π(#1)}
\newcommand{\thelattice}[1]{Π_{#1}}
\newcommand{\equ}[1]{\operatorname{Equ}(#1)}
\newcommand{\compl}[1]{\operatorname{compl}(#1)}
\newcommand{\lub}{\lor}
\newcommand{\glb}{\land}
\DeclareRobustCommand{\stirling}{\genfrac\{\}{0pt}{}}
\newcommand{\setof}[2]{\left\{\,#1\ \mid\ #2\,\right\}}
\newcommand{\sequenceof}[2]{\left(#1 \right)_{#2}}
\newcommand{\setsub}{\setminus}
\newcommand{\supth}{^{\text{th}}}
\newcommand{\powerset}[1]{\mathscr{P}\!\left(#1\right)}
\newcommand{\chain}[1]{\mathbf{#1}}
\newcommand{\poset}[1]{\mathbf{#1}}
\newcommand{\parti}[1]{\mathcal{#1}}
\newcommand{\keyframe}[1]{\parti{K}_{#1}}
\newcommand{\interm}[2]{\parti{K}_{#1}^{#2}}
\newcommand{\intermediate}[1]{\interm{\ordthree}{#1}}
\newcommand{\RESTRICT}[2]{#1 \sqcap #2} % intersection like notation
\newcommand{\restrict}[2][\cardtwo]{\RESTRICT{#2}{#1}}
\newcommand{\blockrestrict}[2]{#1\cap #2}
\newcommand{\bigeqinpart}[3]{\ensuremath{#1
     \underset{{#2}}{\equiv} #3}\xspace}
\let\eqinpart\smalleqinpart
\let\noteqinpart\smallnoteqinpart
\newcommand{\lowchain}[2]{#1^-_{#2}}
\newcommand{\uppchain}[2]{#1^+_{#2}}
\newcommand{\lowerchain}[2]{\lowchain{\chain{#1}}{#2}}
\newcommand{\upperchain}[2]{\uppchain{\chain{#1}}{#2}}
\newcommand{\Cplus}{\glb \upperchain{C}{x}}
\newcommand{\Cminus}{\lub \lowerchain{C}{x}}
\tikzset{
        hatch distance/.store in=\hatchdistance,
        hatch distance=10pt,
        hatch thickness/.store in=\hatchthickness,
        hatch thickness=2pt
    }
\pgfqpoint{\hatchdistance}{\hatchdistance}}
\tikzstyle{common}=[rectangle, draw, align=left, text
\tikzstyle{keyframe}=[common, fill=black!20]
\tikzstyle{diagonal}=[common, fill=white,
\tikzstyle{shaded}=[common, color=black!50, fill=white]
\tikzstyle{fake}=[common, color=white, fill=white]
\tikzstyle{sparse}=[node distance=2cm]
\tikzstyle{vdots}=[align=center, sparse]
\tikzstyle{arrow}=[color=black!20, ->, shorten >=2pt, shorten <=2pt]
\tikzstyle{textshaded}=[rectangle, align=left, color=black!50,
\tikzset{
  other/.style={},
  textother/.style={},
  diag/.style={},
  leave 1/.style={}
  leave 2/.style={}
}
\newcommand{\word}{\ldots}
\newcommand{\@side}[1]{\ifnum#1=0 west\else east\fi}
\newcounter{@reccalls} % number of recursive call
\newcounter{@totalcol} % number of columns ~ 2^@reccalls
\newcounter{@mycolumn} % current column.
\newcounter{@bottom} % number of bottom nodes (\vdots)
\newlength{\@reclength}
\newlength{\@oldlength}
\newlength{\@treewidth}
\newcounter{@treedepth}
\def\@toto|#1|{\@totoiter#1\bogo\bogototo\bogobogo}
\def\@totoiter#1#2\bogototo#3\bogobogo{%
  \stepcounter{@reccalls}%
  \setlength{\@oldlength}{\@reclength}
  \setlength{\@reclength}{\@oldlength * \real{0.48}}
  \ifx#1f\else
  \setcounter{@mycolumn}{\value{@mycolumn}*2+\the\numexpr#1\relax}
  \fi
  \ifx\bogo#2%
  % last call.
  \gdef\@myside{\@side{#1}} \gdef\@parent{#3}
  \setcounter{@totalcol}{1}
  \else% recursive call
  \@totoiter#2\bogototo#3#1\bogobogo%
  \setcounter{@totalcol}{\value{@totalcol}*2}
  \fi}
\newcommand{\@mylength}[1]{%
  \setcounter{@reccalls}{0}
  \setcounter{@totalcol}{0}
  \setcounter{@mycolumn}{0}
  \@toto|#1|
  \stepcounter{@mycolumn}
}
\def\@remfirst#1#2+{#2}
\newcommand{\remfirst}[1]{\@remfirst#1+}
\newcommand{\@drawintermediate}[4]{%
  \ifnum\value{@mycolumn}=#3% diagonal
    % fake to erase below.
    \node[minimum width=\@reclength, fake] (west#1zfake)
    [below=of #1.west, anchor=west] {$#20\word$};
    \node[minimum width=\@reclength, fake] (east#1zfake)
    [below=of #1.east, anchor=east] {$#21\word$};
    % and the real stuff.
    \node[minimum width=\@reclength, diag] (west#1z)
    [below=of #1.west, anchor=west] {$#20\word$};
    \node[minimum width=\@reclength, diag] (east#1z)
    [below=of #1.east, anchor=east] {$#21\word$};
  \else% not diagonal
  \node[minimum width=#4, other] (#1z)
  [below=of #1.center, anchor=center] {$#2\word$};
  \fi
}
\newcommand{\@drawbelowaux}[5]{%
  \@drawintermediate{#1}{#2}{#3}{#4}
  \ifnum\value{@totalcol}=#3% end: rename keyframe(s)
  \ifnum\value{@mycolumn}=#3% diagonal: split
  \node[minimum width=\@reclength, keyframe] (\@uppername0)
  [below=of #1.west, anchor=west] {$\@upperlabel0\word$};
  \node[minimum width=\@reclength, keyframe] (\@uppername1)
  [below=of #1.east, anchor=east] {$\@upperlabel1\word$};
  \else
  \node[minimum width=\@reclength, keyframe] (\@uppername#5)
  [below=of #1.center, anchor=center] {$\@upperlabel#5\word$};
  \fi
  \else %
  \ifnum\value{@mycolumn}=#3% diagonal: split
  \@drawbelowaux{west#1z}{#20}{\the\numexpr#3+1\relax}{\@reclength}{0}
  \@drawbelowaux{east#1z}{#21}{\the\numexpr#3+1\relax}{\@reclength}{1}
  \else
  \@drawbelowaux{#1z}{#2}{\the\numexpr#3+1\relax}{#4}{#5}
  \fi
  \fi
}
\newcommand{\@drawbelow}[2]{%
  \gdef\@uppername{#1}
  \gdef\@upperlabel{#2}
  \@drawbelowaux{#1}{#2}{1}{\@oldlength}{}
}
\newcommand{\@drawtree}[1]{%
  \setlength{\@reclength}{\@treewidth}
  \@mylength{#1}
  \ifthenelse{\value{@reccalls}=1}{% root of the tree
    \node[minimum width=\@oldlength, keyframe] (#1) {$\word$};
    \@drawbelow{#1}{}
  }{% not root
    \@drawbelow{#1}{\remfirst{#1}}
  }

  % arrows
  \begin{pgfonlayer}{\@arrowlayer}
    \draw[arrow] (#1.south) -- (#10.north);
    \draw[arrow] (#1.south) -- (#11.north);
  \end{pgfonlayer}

  % recursive calls
  \ifthenelse{\value{@reccalls}>\value{@treedepth}}{%
    \stepcounter{@bottom}
    \node[vdots, minimum width=\@reclength] (v\the@bottom)
    [below=of #10.center, anchor=center] {$\vdots$};
    \stepcounter{@bottom}
    \node[vdots, minimum width=\@reclength] (v\the@bottom)
    [below=of #11.center, anchor=center] {$\vdots$};
  }{%
    \@drawtree{#10}
    \@drawtree{#11}
  }
}
\newcounter{@ones} \newcounter{@expoones}
\def\@totoones|#1|{\@totoonesiter#1\bogo\bogototo}
\def\@totoonesiter#1#2\bogototo{%
  \ifx#11
  \stepcounter{@ones}
  \setcounter{@expoones}{\value{@expoones}*2}
  \else
  \relax
  \fi
  \ifx\bogo#2%
  % last call.
  \relax
  \else% recursive call
  \@totoonesiter#2\bogototo%
  \fi}
\newcommand{\@countingones}[1]{%
  \setcounter{@ones}{0}
  \setcounter{@expoones}{1}
  \@totoones|#1|
}
\newcommand{\@numberlevelsaux}[2]{%
  \ifnum#2<\value{@expoones}
  \node[textother] (n#1) [right=of #1] {$\mathcal{K}_{\the@ones}^{#2}$};
  \@numberlevelsaux{#1z}{\the\numexpr#2+1\relax}
  \else
  \relax
  \fi
}
\newcommand{\@numberlevels}[2]{%
  \@countingones{#2}
  \node (n#1) [right=of #2] {$\mathcal{K}_{#1}$};
  \ifnum#1>\value{@treedepth}
  \relax
  \else
  \@numberlevelsaux{#2z}{1}
  \@numberlevels{\the\numexpr#1+1\relax}{#21}
  \fi
}
\newcommand{\@leaves}[1]{%
  \tikzset{leave 1/.style={keyframe, sparse}}
  \def\@leavelabelw{$\bullet$}
  \tikzset{leave 2/.style={keyframe, sparse}}
  \def\@leavelabele{$\bullet$}
  \ifnum#1>\value{@bottom}
  \node (nomega) [right=of l\the\numexpr#1-1\relax east]
  {$\mathcal{K}_{\omega}$};
  \else
  \ifnum#1<\value{@bottom}
%  \tikzset{leave 2/.style={text height=\heightof{0}, sparse}}
  \tikzset{leave 2/.style={text height=height("0"), sparse}}
  \def\@leavelabele{\ldots}
  \fi
  \ifnum#1>1
%  \tikzset{leave 1/.style={text height=\heightof{0}, sparse}}
  \tikzset{leave 1/.style={text height=height("0"), sparse}}
  \def\@leavelabelw{\ldots}
  \fi
  \node[leave 1, minimum width=\@reclength] (l#1west)
  [below=of v#1.west, anchor=west] {\@leavelabelw};
  \node[leave 2, minimum width=\@reclength] (l#1east)
  [below=of v#1.east, anchor=east] {\@leavelabele};
  \@leaves{\the\numexpr#1+1\relax}
  \fi
}
\newcommand{\@drawtreeaux}[2]{%
  \setlength{\@treewidth}{#2}
  \setcounter{@treedepth}{#1}
  \setcounter{@bottom}{0}
  \@drawtree{f}
  \@numberlevels{0}{f}
  \setlength{\@oldlength}{\@reclength}
  \setlength{\@reclength}{\@oldlength * \real{0.48}}
  \@leaves{1}
}
\newcommand{\fulltree}[2]{%
  \gdef\@arrowlayer{background}
  \tikzset{
    other/.style=shaded,
    textother/.style=textshaded,
    diag/.style=diagonal
  }%
  \@drawtreeaux{#1}{#2}
}
\newcommand{\keyframes}[2]{%
  \gdef\@arrowlayer{foreground}
  \tikzset{
    other/.style=fake,
    textother/.style=fake,
    diag/.style=fake
  }%
  \@drawtreeaux{#1}{#2}
}
\tikzset{
        hatch distance/.store in=\hatchdistance,
        hatch distance=10pt,
        hatch thickness/.store in=\hatchthickness,
        hatch thickness=2pt
    }
\pgfqpoint{\hatchdistance}{\hatchdistance}}
\tikzstyle{lambda}=[rectangle, draw, align=left, text
\tikzstyle{hatched}=[fill=white,
\tikzstyle{kappa}=[rectangle, draw, dashed, align=left, text
\tikzstyle{label}=[text=black]
\tikzstyle{elem}=[circle,fill=black,inner sep=1pt]
\tikzstyle{arrow}=[color=black, ->, shorten >=2pt, shorten <=2pt]
\begin{document}

\title{Chains, Antichains, and Complements in Infinite Partition
  Lattices}

% Remove any unused author tags.

% author one information
\author{James Emil Avery}
\address{
  James Avery:
  Niels Bohr Institute, University of Copenhagen\\
  Blegdamsvej 17, 2100 Copenhagen \O\\
  Denmark\\
  E-Mail: \texttt{avery@diku.dk}
}  \curraddr{}
%\email{avery@diku.dk}
 \thanks{}

% author two information
\author{Jean-Yves Moyen}
\address{
  Jean-Yves Moyen:
  Laboratoire d'Informatique de Paris Nord, Universit{\' e} Paris XIII\\
  99, avenue J.-B. Cl{\' e}ment\\
  93430 Villetaneuse\\
  France\\
  E-Mail: \texttt{Jean-Yves.Moyen@lipn.univ-paris13.fr}
} 
\curraddr{}
%\email{Jean-Yves.Moyen@lipn.univ-paris13.fr}
\thanks{}

% author three information
\author[P.~Ruzicka]{Pavel Ruzicka}
\address{
  Pavel Ruzicka: 
  Department of Algebra , Room 307\\
  Sokolovska 83\\
  186 75, Prague\\
  Czech Republic\\
  E-Mail: \texttt{ruzicka@karlin.mff.cuni.cz}
}
\curraddr{}
\thanks{}

% author four information
\author{Jakob Grue Simonsen} 
\address{
  Jakob Grue Simonsen:
  Department of Computer Science, University of Copenhagen
  (DIKU)\\
  Njalsgade 128-132, 2300 Copenhagen S\\
  Denmark\\
  E-Mail: \texttt{simonsen@diku.dk}
} 
\curraddr{} 
\thanks{}

% \author[J.~E.~Avery]{James Emil Avery}
% \email{avery@diku.dk}
% \address{Niels Bohr Institute, University of Copenhagen\\
%   Blegdamsvej 17, 2100 Copenhagen \O\\
%   Denmark} 
% \curraddr{}
% \thanks{}

% % author two information
% \author[J.-Y.~Moyen]{Jean-Yves Moyen}
% \email{Jean-Yves.Moyen@lipn.univ-paris13.fr}
% \address{Laboratoire d'Informatique de Paris Nord, Universit{\' e} Paris XIII\\
%   99, avenue J.-B. Cl{\' e}ment\\
%   93430 Villetaneuse\\
%   France}
% \curraddr{Department of Computer Science, University of Copenhagen
%   (DIKU)\\
%   Njalsgade 128-132, 2300 Copenhagen S\\
%   Denmark}
% \thanks{}

% % author four information
% \author[J.~G.~Simonsen]{Jakob Grue Simonsen} 
% \email{simonsen@diku.dk}
% \address{Department of Computer Science, University of Copenhagen
%   (DIKU)\\
%   Njalsgade 128-132, 2300 Copenhagen S\\
%   Denmark
% } 
% \curraddr{} 
% \thanks{}

\subjclass[2020]{Primary 06B05; 06C15}

\keywords{Partition lattice; order theory; antichain; chain; complement;
  orthocomplemented lattice; cardinal; ordinal}

\date{}

\dedicatory{}

% colorblind friendly color scheme
% http://jfly.iam.u-tokyo.ac.jp/color/#pallet
\definecolor{CBorange}{RGB}{230,159,0}
\definecolor{CBskyblue}{RGB}{86,180,230} % light blue
\definecolor{CBgreen}{RGB}{0,158,115} % green
\definecolor{CByellow}{RGB}{240,228,66}
\definecolor{CBblue}{RGB}{0,114,178}
\definecolor{CBred}{RGB}{213,94,0} % vermillon
\definecolor{CBpink}{RGB}{204,121,167}

\begin{abstract}
  \vspace{-.3cm} We consider the partition lattice
  $\thelattice{\cardone}$ on any set of transfinite cardinality
  $\cardone$ and properties of $\thelattice{\cardone}$ whose analogues
  do not hold for finite cardinalities. Assuming the Axiom of Choice
  we prove: %
  (I) the cardinality of any maximal well-ordered chain is always
  exactly $\cardone$; %
  (II) there are maximal chains in $\thelattice{\cardone}$ of
  cardinality $> \cardone$; %
  (III) if, for every cardinal $\cardtwo < \cardone$, we have
  $2^{\cardtwo} < 2^\cardone$, there exists a maximal chain of
  cardinality $< 2^{\cardone}$ (but $≥ \cardone$) in
  $\thelattice{2^\cardone}$; %
  (IV) every non-trivial maximal antichain in $\thelattice{\cardone}$
  has cardinality between $\cardone$ and $2^{\cardone}$, and these
  bounds are realized. Moreover we can construct maximal antichains of
  cardinality $\max(\cardone, 2^{\cardtwo})$ for any
  $\cardtwo ≤ \cardone$; %
  (V) all cardinals of the form $\cardone^\cardtwo$ with
  $0 ≤ \cardtwo ≤ \cardone$ occur as the number of complements to
  some partition $\parti{P} \in \thelattice{\cardone}$, and only these
  cardinalities appear. Moreover, we give a direct formula for the
  number of complements to a given partition; %
  (VI) Under the Generalized Continuum Hypothesis, the cardinalities
  of maximal chains, maximal antichains, and numbers of complements
  are fully determined, and we provide a complete characterization.
\end{abstract}

\maketitle

Let $\cardone$ be a cardinal and let $S$ be a set of cardinality
$\cardone$. The set of partitions of $S$ forms a lattice when endowed
with the binary relation $≤$, called \emph{refinement}, defined by
$\parti{P} ≤ \parti{Q}$ if and only if each block of $\parti{P}$ is
a subset of a block of $\parti{Q}$.  This lattice is called the
\emph{partition lattice} on $S$, and is denoted $\thelatticeweak{S}$.
By the standard correspondence between partitions and equivalence
relations, it follows that $\thelatticeweak{S}$ is isomorphic
to the lattice $\equ{S}$ of equivalence relations on $S$ ordered by
set inclusion on $S \times S$.

As the particulars of $S$ do not affect the order-theoretic properties
of $\thelatticeweak{S}$ we shall without loss of generality restrict
our attention to the lattice $\thelattice{\cardone} =
\thelatticeweak{\cardone}$. Initiated by a seminal paper by Ore
\cite{Ore42}, many of the properties of $\thelattice{\cardone}$ that
hold for arbitrary cardinals $\cardone$ are well-known.  Indeed, it is
known that $\thelattice{\cardone}$ is complete, matroid (hence
atomistic and semimodular), non-modular (hence non-distributive) for
$\cardone ≥ 4$, relatively complemented (hence complemented), and
simple \cite[\S 8-9]{Birkhoff:lattice}, \cite{RivalStanford:algpart},
\cite[Sec.\ IV.4]{Gratzer:genlattice}.

For properties depending on $\cardone$, only a few results exist in
the literature for infinite $\cardone$. Cz{\'e}dli has proved that if
there is no inaccessible cardinal $≤ \cardone$ then the following
holds: If $\cardone ≥ 4$, $\thelattice{\cardone}$ is generated by
four elements \cite{Czedli:four}, and if $\cardone ≥ 7$,
$\thelattice{\cardone}$ is (1+1+2)-generated \cite{Czedli:oneonetwo}
(for $\cardone = \aleph_0$, slightly stronger results hold
\cite{Czedli:countable}). It appears that no further results are
known, beyond those holding for all cardinalities, finite or
infinite. The aim of the present work is to prove a number of results
concerning $\thelattice{\cardone}$ that depend on $\cardone$ being an
infinite cardinal.

\section{Preliminaries and notation}\label{sec:prels}
% Notations
%% cardinals, ordinals.

We work in ZF with the Axiom of Choice (AC). As usual, a set $S$ is
\emph{well-ordered} if and only if it is totally ordered and every
non-empty subset of $S$ has a least element. Throughout the paper, we
use von Neumann's characterization of ordinals: a set $S$ is an
ordinal if and only if it is strictly well-ordered by $\subsetneq$ and
every element of $S$ is a subset of $S$. The \emph{order type} of a
well-ordered set $S$ is the (necessarily unique) ordinal $\ordone$
that is order-isomorphic to $S$.  Cardinals and ordinals are denoted
by Greek letters $\ordone, \ordtwo, \ordfour, \ordthree, \ldots$ for
ordinals and $\cardone, \cardtwo, \ldots$ for cardinals. We denote by
$\order{\cardone}$ the initial ordinal of $\cardone$, and by
$\card{\ordone}$ the cardinality of $\ordone$. The cardinality of a
set is denoted $\cardinal{S}$ and its powerset is denoted
$\powerset{S}$. For a cardinal $\cardone$, we denote by $\cardone^+$
its successor and by $\cardone^-$ its predecessor cardinal. Note that
$\cardone^-$ is defined only if $\cardone$ is a successor cardinal.

Many standard results on cardinal arithmetic can be found in~\cite{HolzSteffensWeitz}, among
other places, and are used frequently throughout the proofs.

\smallskip

Recall that a \emph{chain} in a poset $(\poset{P},≤)$ is a subset of
$\poset{P}$ that is totally ordered by $≤$. Similarly, an
\emph{antichain} in $(\chain{P},≤)$ is a subset of $\chain{P}$ such
that any two distinct elements of the subset are $≤$-incomparable. A
chain (respectively, antichain) in $(\chain{P},≤)$ is \emph{maximal}
if no element of $\chain{P}$ can be added to the chain without losing
the property of being a chain (respectively, antichain). Observe that
if $\poset{P}$ contains a bottom element, $\bot$, or a top element, $\top$, it
belongs to any maximal chain. A chain $\chain{C}$ in $(\chain{P},≤)$ is
\emph{saturated} if, for any two elements $\parti{Q}<\parti{S}$ of the
chain, there is no element $\parti{R} \in \chain{P} \setsub \chain{C}$
such that $\parti{Q} < \parti{R} < \parti{S}$ and
$\chain{C} \cup \{\parti{R}\}$ is a chain; notably, a chain containing
$\bot$ and $\top$ is maximal if and only if it is saturated. We say
that a chain is \emph{endpoint-including} if it contains a least and a
greatest element, not necessarily equal to $\bot$ and
$\top$, respectively. By the Maximal Chain Theorem
\cite{Hausdorff:grund}, every chain in a poset is contained in a
maximal chain.

\smallskip

%% partitions, set of partitions.
We denote partitions (and equivalences) of $\cardone$ by capital
italic Roman letters $\parti{P},\parti{Q},\ldots$, and denote subsets
of $\thelattice{\cardone}$ such as chains and antichains by capital
boldface letters $\chain{C}, \chain{D},\ldots$;
%% blocks
If $\parti{P} = \{B_\ordthree\}$ is a partition, we call its elements,
$B_\ordthree$, \emph{blocks}.
%% top, bottom.
It is easily seen that $\bot = \setof{\{\ordfour\}}{\ordfour \in
  \cardone}$ and $\top = \{\cardone\}$; that is, the set of all
singleton subsets of $\cardone$, respectively the singleton set
containing all elements of $\cardone$.

%% <, \prec
As is usual, if $\parti{P}, \parti{Q} \in \thelattice{\cardone}$, we
write $\parti{P} \prec \parti{Q}$ if $\parti{P} < \parti{Q}$ and no
$\parti{R} \in \thelattice{\cardone}$ exists such that
$\parti{P} < \parti{R} < \parti{Q}$.  Furthermore,
$\parti{P} \preceq \parti{Q}$ denotes that either
$\parti{P} \prec \parti{Q}$ or $\parti{P} = \parti{Q}$.
It follows that $\parti{P} \prec \parti{Q}$ if and only if $\parti{Q}$
can be obtained by merging exactly two distinct blocks
of $\parti{P}$. If $\poset{X}$ is a subset of $\thelattice{\cardone}$, we
write $\parti{P} \prec_{\poset{X}} \parti{Q}$ if
$\parti{P}, \parti{Q} \in \poset{X}$ with $\parti{P} < \parti{Q}$, and
there exists no $\parti{R} \in \poset{X}$ such that
$\parti{P} < \parti{R} < \parti{Q}$. A subset
$\poset{X} \subseteq \thelattice{\cardone}$ is called \emph{covering}
if $\parti{P} \prec_{\poset{X}} \parti{Q}$ implies
$\parti{P} \prec \parti{Q}$.

A block $B$ induces an equivalence relation on $\cardone$, defined by
$\eqinpart{\ordthree}{B}{\ordfour}$ if and only if both
$\ordthree,\ordfour\in B$, and a partition $\parti{P}$ naturally
induces an equivalence relation defined by
$\eqinpart{\ordthree}{\parti{P}}{\ordfour}$ if and only if there is a
block $B\in\parti{P}$ with $\eqinpart{\ordthree}{B}{\ordfour}$.
Conversely, any equivalence relation corresponds to the partition
whose blocks are the maximal sets of equivalent
elements. 
This one-to-one correspondence allows us to consider a partition as
its corresponding equivalence relation when convenient, and vice
versa.

If $\parti{P} \in \thelattice{\cardone}$ contains exactly one block
$B$ with $\cardinal{B} ≥ 2$ and the remaining blocks are all
singletons, we call $\parti{P}$ a \emph{singular} partition, following
Ore \cite{Ore42}.

\smallskip

% join and meet
If $\chain{C} \subseteq \thelattice{\cardone}$, then its greatest
lower bound $\glb \chain{C}$ is the partition that satisfies
$\eqinpart{x}{\glb \chain{C}}{y}$ if and only if
$\eqinpart{x}{\parti{P}}{y}$ for all $\parti{P} \in \chain{C}$. That
is, the blocks of $\glb \chain{C}$ are all the nonempty intersections
whose terms are exactly one block from every partition
$\parti{P} \in \chain{C}$.  Conversely, its least upper bound
$\lub \chain{C}$ is the partition such that
$\eqinpart{\ordfour}{\lub \chain{C}}{\ordthree}$ if and only if there
exists a finite sequence of partitions
$\parti{P}^1,\ldots,\parti{P}^{k} \in \chain{C}$ and elements
$\ordtwo^0, \ldots, \ordtwo^{k}, \in \cardone$ such that
$\ordfour = \eqinpart{%
  \eqinpart{%
    \eqinpart{%
      \eqinpart{%
        \ordtwo^0}{\parti{P}^1}{\ordtwo^1}}{\parti{P}^2}{\ordtwo^2}}{\parti{P}^3}{\cdots}}{\parti{P}^k}{\ordtwo^k}
= \ordthree$.
A set of partitions is called {\em complete} if it contains both the
least upper bound and greatest lower bound of all its subset, and {\em
  closed} if this is true for every nonempty subset, i.e., a closed
set need not include $\bot$ and $\top$.

\smallskip

Finally, the {\em cofinality} $\cofinality{\cardone}$ of an infinite
cardinal $\cardone$ is the least cardinal $\cardtwo$ such that a set
of cardinality $\cardone$ can be written as a union of $\cardtwo$ sets
of cardinality strictly smaller than $\cardone$:
$\cofinality{\cardone} = \min\setof{\cardinal{I}} {\cardone =
  \cardinal{\bigcup_{i \in I} A_i} \land \forall i \in I,
  \cardinal{A_i} < \cardone}$.
Since $\cardone = \bigcup_{i \in \cardone}\{i\}$, we always have
$\cofinality{\cardone} ≤ \cardone$.

If $\cofinality{\cardone} = \cardone$, then the cardinal $\cardone$ is
called \emph{regular}, otherwise it is called \emph{singular}.  Under
AC, which is assumed throughout this paper, every infinite successor
cardinal is regular.  K{\"o}nig's Theorem \cite{Konig:1905} implies
$\cofinality{2^\cardone} > \cardone$, and we additionally have
$2^\cardone ≤ 2^\cardtwo$ whenever $\cardone < \cardtwo$.  By Easton's
theorem \cite{Easton:1970}, these are the only two constraints on
permissible values for $2^\cardone$ when $\cardone$ is regular and
when only ZFC is assumed.  In contrast, when the Generalized Continuum
Hypothesis (GCH) is assumed, cardinal exponentiation is completely
determined.

For infinite $\cardone$, $\thelattice{\cardone}$ has cardinality
$2^{\cardone}$ which provides a weak upper bound on the cardinality of
its subsets, in particular maximal chains, maximal antichains, and sets
of complements.

\section{Results}
We summarize here the main contributions of the paper.

\begin{result}[Well-ordered chains: Theorem \ref{thm:wf-all}]
  Let $\cardone$ be any cardinal. The cardinality of a maximal
  well-ordered chain in $\thelattice{\cardone}$ is always exactly
  $\cardone$.
\end{result}

\begin{result}[Long chains: Theorem~\ref{thm:long-nochainreach}]
  Let $\cardone$ be an infinite cardinal.  There exist chains of
  cardinality $> \cardone$ in $\thelattice{\cardone}$.
\end{result}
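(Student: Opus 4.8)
The plan is to reduce the statement, in two steps, to a purely combinatorial fact about linear orders, and then to settle that fact with König's theorem. First I would pass from partitions to subsets by means of \emph{singular} partitions. Fixing a point $\ast\in\cardone$, associate to each $B\subseteq\cardone\setminus\{\ast\}$ the singular partition $\mathcal{P}_B$ whose only non-singleton block is $B\cup\{\ast\}$. A one-line check gives $B\subseteq B'$ if and only if $\mathcal{P}_B\le\mathcal{P}_{B'}$, and $B\mapsto\mathcal{P}_B$ is injective, so this is an order-embedding of $(\powerset{\cardone\setminus\{\ast\}},\subseteq)$ into \thelattice{\cardone}. Hence any chain of subsets of cardinality $\theta$ produces a chain in \thelattice{\cardone} of the same cardinality; extending it to a maximal chain (Maximal Chain Theorem) only enlarges it, so it suffices to build chains of subsets of $\cardone\setminus\{\ast\}$ (a set of the same cardinality as \cardone) of cardinality $>\cardone$, and, under GCH, of cardinality $2^{\cardone}$.

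Second, I would realise chains of subsets as families of initial segments (down-sets) of a linear order: for any linear order $(L,<)$ the down-sets form a chain in $(\powerset{L},\subseteq)$ whose size is the number of Dedekind cuts of $L$. Thus the whole problem becomes: \emph{find a linear order $L$ with $\card{L}=\cardone$ that has more than \cardone cuts} (resp.\ $2^{\cardone}$ cuts). It is worth noting why a naive recursion cannot work: a strictly increasing \emph{well-ordered} family $\{A_\xi\}$ of subsets of \cardone has length at most \cardone, since picking $x_\xi\in A_{\xi+1}\setminus A_\xi$ injects the index set into \cardone (consistent with the bound on well-ordered chains proved earlier). The surplus length must therefore come from the \emph{unrealised} cuts of a densely ordered $L$.

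The heart of the argument, and the step I expect to be the main obstacle, is the construction of such an $L$ in ZFC. Set $\mu=\cofinality{\cardone}$ and fix cardinals $\lambda_i\nearrow\cardone$ cofinal in \cardone for $i<\mu$. Consider the tree of bounded sequences $\sequenceof{x_j}{j<i}$ with $i<\mu$ and $x_j<\lambda_j$, ordered lexicographically; its branches of length $i$ number $P_i:=\prod_{j<i}\lambda_j$, and König's theorem gives $P_\mu=\prod_{j<\mu}\lambda_j>\cardone$. The obstacle is that this tree can have more than \cardone nodes, so I would sidestep it by truncation: let $i^\ast\le\mu$ be least with $P_{i^\ast}>\cardone$. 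Then $P_i\le\cardone$ for every $i<i^\ast$, so the truncated tree $T=\setof{\sequenceof{x_j}{j<i}}{i<i^\ast,\ x_j<\lambda_j}$ has at most $\sum_{i<i^\ast}P_i\le\card{i^\ast}\cdot\cardone=\cardone$ nodes, while its $P_{i^\ast}>\cardone$ branches of length $i^\ast$ are pairwise separated by nodes of $T$ (here $\lambda_j\ge 2$ is used) and hence determine pairwise distinct cuts. Padding the ground set to cardinality exactly \cardone and pushing these $>\cardone$ down-sets through the embedding of the first step yields a chain in \thelattice{\cardone} of cardinality $>\cardone$.

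Finally, the GCH clause would then be immediate rather than a separate construction: the chain just produced has cardinality $\theta$ with $\cardone<\theta\le\card{\thelattice{\cardone}}=2^{\cardone}$, and GCH leaves no cardinal strictly between \cardone and $2^{\cardone}$, forcing $\theta=2^{\cardone}$. If one prefers an explicit witness, under GCH the full binary tree $2^{<\cardone}$ already has exactly \cardone nodes and $2^{\cardone}$ branches, giving $2^{\cardone}$ cuts directly.
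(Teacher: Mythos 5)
Your argument is correct, and its outer layers coincide with the paper's: both reduce the problem to exhibiting a chain of cardinality $>\cardone$ in $(\powerset{\cardone},\subseteq)$ by pushing subsets through singular partitions, both finish with the Maximal Chain Theorem, and both obtain the GCH clause by pinching the resulting cardinality between $\cardone$ and $2^{\cardone}=\cardinal{\thelattice{\cardone}}$. (Your device of anchoring every block at a fixed point $\ast$ is in fact a slightly tidier way to make the subset-to-partition map an order embedding than the paper's removal of $\emptyset$ and the one possible singleton from the chain.) Where you genuinely diverge is at the combinatorial core: the paper simply cites Sierpi\'nski's theorem that $\powerset{\cardone}$ contains a chain of cardinality $>\cardone$, whereas you reprove it from scratch via Dedekind cuts of a linear order of size $\le\cardone$, built as a truncated tree of bounded sequences along a cofinal sequence $\lambda_i\nearrow\cardone$, with K\"onig's theorem supplying $\prod_{j<\mu}\lambda_j>\cardone$. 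This is essentially the standard proof of Sierpi\'nski's result, so you lose nothing in correctness and gain a self-contained argument that makes visible exactly where Choice enters; the cost is length, plus two routine details you leave implicit: (i) the least $i^{\ast}$ with $P_{i^{\ast}}>\cardone$ is necessarily a limit ordinal (if $P_i\le\cardone$ then $P_{i+1}=P_i\cdot\lambda_i\le\cardone\cdot\cardone=\cardone$), which is what guarantees that two branches of length $i^{\ast}$ first differ at some $j$ with $j+1<i^{\ast}$ and hence are separated by a node of $T$; and (ii) one must fix a linear order on the nodes of $T$ compatible with the lexicographic order on branches (e.g.\ declaring each node smaller than every branch properly extending it) so that ``separated by a node'' really yields pairwise distinct down-sets. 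Both points are standard and easily supplied, so I would count your proof as a valid, more elementary alternative to the paper's citation-based one.
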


\begin{result}[Short chains: Theorem~\ref{thm:small_chain}]
  Let $\cardone$ be an infinite cardinal such that for every cardinal
  $\cardtwo < \cardone$ we have $2^\cardtwo < 2^\cardone$. Then there
  exists a maximal chain of cardinality $< 2^{\cardone}$ (but $≥
  \cardone$) in $\thelattice{2^\cardone}$.
\end{result}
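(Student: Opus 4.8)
The plan is to realize the underlying set of $\thelattice{2^\cardone}$ as the generalized Cantor space $2^{\cardone}$ of functions $f\colon\cardone\to\{0,1\}$ and to build the chain from a skeleton of \emph{keyframe} partitions, refined coordinate by coordinate, whose intermediate covering-gaps are filled by short maximal sub-chains. Concretely, for each ordinal $\ordthree\le\cardone$ (viewing $\cardone$ as its initial ordinal) I would let $\parti{K}_{\ordthree}$ be the partition whose blocks are the cylinders $\setof{g\in 2^{\cardone}}{g\vert_{\ordthree}=f\vert_{\ordthree}}$, so that two functions share a block iff they agree on their first $\ordthree$ coordinates. Then $\parti{K}_0=\top$, $\parti{K}_{\cardone}=\bot$ (distinct functions differ at some coordinate $<\cardone$), the family is $\le$-decreasing in $\ordthree$, and it is continuous at limits: $\parti{K}_{\ordthree}=\bigwedge_{\ordfour<\ordthree}\parti{K}_{\ordfour}$ for limit $\ordthree$, since agreeing on all coordinates below $\ordthree$ is the same as agreeing on the first $\ordthree$. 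The block set $B_{\ordthree}$ of $\parti{K}_{\ordthree}$ has cardinality $2^{\card{\ordthree}}$.

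First I would fill each successor gap. The interval $[\parti{K}_{\ordthree+1},\parti{K}_{\ordthree}]$ is isomorphic to the Boolean lattice $\powerset{B_{\ordthree}}$: passing from $\parti{K}_{\ordthree}$ toward $\parti{K}_{\ordthree+1}$ amounts to choosing, independently for each block, whether to split it along coordinate $\ordthree$ into its two halves, and a two-element block admits no intermediate partition. Into this interval I would insert the well-ordered maximal chain obtained by fixing a well-ordering of $B_{\ordthree}$ and splitting the blocks one at a time, taking meets at limit stages. Each successor step splits a single block and is therefore a covering, and the inserted chain has cardinality $2^{\card{\ordthree}}$. Let $\chain{C}$ be the union of all these gap-chains together with all keyframes.

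The main obstacle is verifying that $\chain{C}$ is \emph{maximal}, and not merely saturated gap by gap. I would use the criterion that a chain containing $\bot$ and $\top$ is maximal iff every jump (pair of consecutive elements) is a covering and every Dedekind cut of the chain has supremum equal to infimum and realized in the chain. Inside a gap both conditions hold by construction. The delicate case is a limit keyframe $\parti{K}_{\ordthree}$: every element of $\chain{C}$ indexed before stage $\ordthree$ is strictly above $\parti{K}_{\ordthree}$, their infimum is $\bigwedge_{\ordfour<\ordthree}\parti{K}_{\ordfour}=\parti{K}_{\ordthree}$, and $\parti{K}_{\ordthree}$ itself lies in $\chain{C}$; hence no partition $\parti{R}$ can satisfy $\parti{K}_{\ordthree}<\parti{R}$ while lying below everything above the cut, so nothing fits. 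The same computation at $\ordthree=\cardone$ governs the approach to $\bot$. This is where the careful cut-by-cut bookkeeping is needed, and it closes maximality.

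Finally I would compute the cardinality. Summing over gaps, $\card{\chain{C}}=\sum_{\ordthree<\cardone}2^{\card{\ordthree}}=\sup_{\cardtwo<\cardone}2^{\cardtwo}=:\sigma$, while the distinct keyframes already force $\card{\chain{C}}\ge\cardone$, so $\sigma\ge\cardone$. For the upper bound, $\sigma$ is the supremum of at most $\cardone$ many cardinals, each strictly below $2^{\cardone}$ by hypothesis; since König's theorem gives $\cofinality{2^{\cardone}}>\cardone$ as recorded above, such a supremum stays strictly below $2^{\cardone}$, so $\cardone\le\sigma<2^{\cardone}$, which is exactly the claimed range. Under GCH one has $2^{\cardtwo}=\cardtwo^{+}$ for every $\cardtwo$, whence $\sigma=\sup_{\cardtwo<\cardone}\cardtwo^{+}=\cardone$, yielding a maximal chain of cardinality exactly $\cardone$ in $\thelattice{2^{\cardone}}$.
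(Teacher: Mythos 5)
Your proposal is correct and follows essentially the same route as the paper: the same generalized Cantor space $\{0,1\}^{\order{\cardone}}$, the same keyframe partitions $\keyframe{\ordthree}$ by agreement on initial coordinates, the same gap-filling by well-ordering the blocks of $\keyframe{\ordthree}$ and splitting them one at a time, and the same cardinality computation $\sum_{\ordthree<\order{\cardone}}2^{\card{\ordthree}}<2^{\cardone}$ via K{\"o}nig's theorem. The only cosmetic difference is that you package maximality as a closure-plus-covering criterion on Dedekind cuts (using continuity of the keyframes at limits), whereas the paper argues directly by case analysis on whether the set of chain elements above a putative new partition has a minimum; the substance is identical.
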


\begin{result}[Antichains: Theorems~\ref{thm:antichain_all_card}
  and~\ref{rem:antichain_between_card}]
  Let $\cardone$ be an infinite cardinal. Each non-trivial maximal
  antichain in $\thelattice{\cardone}$ has cardinality between
  $\cardone$ and $2^{\cardone}$, and these bounds are tight (there
  exists maximal antichains with each of these two cardinalities).
\end{result}

\begin{result}[Complements: Theorems~\ref{thm:number_of_complements},
  \ref{cor:compl-existence}, and~\ref{thm:GCHkappa_complements}]
  Let $\cardone$ be an infinite cardinal.
  All cardinals of the form $\cardone^\cardtwo$ with $0 ≤ \cardtwo
  ≤ \cardone$ occur as the number of complements to some partition
  $\parti{P} \in \thelattice{\cardone}$, and these are the only
  cardinalities the set of complements can have.

  For non-trivial partitions $\parti{P} \notin \{\bot,\top\}$, the
  number of complements is between $\cardone$ and $2^\cardone$,
  i.e. $\cardone^{\cardtwo}$ with $1 ≤ \cardtwo ≤ \cardone$.  The
  number of complements to $\parti{P}$ is $2^\cardone$ unless (i)
  $\parti{P}$ contains exactly one block $B$ of cardinality
  $\cardone$, and (ii) $\cardinal{\cardone\setsub B} < \cardone$.  If
  $\parti{P}$ contains one block $B$ of size $\cardone$, then
  $\parti{P}$ has $\cardone^{\cardinal{\cardone \setsub B}}$
  complements.
\end{result}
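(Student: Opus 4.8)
The plan is to reduce the lattice condition "\parti{Q} is a complement of \parti{P}" to a combinatorial one and then count. Unwinding the descriptions of $\land$ and $\lor$ from Section~\ref{sec:prels}, $\parti{Q}$ is a complement of $\parti{P}$ exactly when (a) every block of \parti{Q} meets every block of \parti{P} in at most one point — this is $\parti{P}\land\parti{Q}=\bot$, since the blocks of the meet are the pairwise intersections — and (b) the bipartite \emph{incidence graph}, whose two vertex classes are the blocks of \parti{P} and the blocks of \parti{Q} and which has an edge $B\!-\!C$ whenever $B\cap C\neq\emptyset$, is connected — this is $\parti{P}\lor\parti{Q}=\top$, unwinding the finite zig-zag description of the join. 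Thus a complement is precisely a partition of \cardone into partial transversals of \parti{P} whose incidence graph is connected. I would isolate this equivalence as the first lemma and phrase everything afterwards in terms of (a) and (b).

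The lower bounds come from a single \emph{hub} construction. Fix a block $B$ of \parti{P} and a map $f\colon\cardone\setsub B\to B$ that is injective on each block of \parti{P} other than $B$, and let $\parti{Q}_f$ have the blocks $\{h\}\cup f^{-1}(h)$ for $h\in B$. Injectivity on each source block gives (a), and (b) is automatic because every block of $\parti{Q}_f$ contains a point of $B$, so the incidence graph is a star about the vertex $B$ to which every other \parti{P}-block attaches. Distinct $f$ give distinct $\parti{Q}_f$, so the number of complements is at least the number of admissible $f$, namely $\prod_{B'\neq B}(\text{injections }B'\hookrightarrow B)$; when $\card{B}=\cardone$ each factor is $\cardone^{\card{B'}}$ and the product is $\cardone^{\card{\cardone\setsub B}}$. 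For the matching upper bound I would treat the decisive case where $B$ is the unique block of size \cardone and $\cardthree:=\card{\cardone\setsub B}<\cardone$: in any complement the \cardone points of $B$ lie in \cardone distinct blocks, so \parti{Q} is determined by recording, for each of the \cardthree points of $\cardone\setsub B$, either which point of $B$ shares its block or the fact that its block avoids $B$, together with the partition of the latter points; this data ranges over a set of size $\le(\cardone+1)^{\cardthree}\cdot 2^{\cardthree}=\cardone^{\cardthree}$. Hence a partition with a block $B$ of size \cardone has exactly $\cardone^{\card{\cardone\setsub B}}$ complements, which collapses to $2^{\cardone}$ as soon as $\card{\cardone\setsub B}=\cardone$, in particular whenever two blocks have size \cardone.

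It remains to prove that a non-trivial \parti{P} with \emph{no} block of size \cardone has exactly $2^{\cardone}$ complements; the upper bound is $\card{\thelattice{\cardone}}=2^{\cardone}$, so only a matching lower bound is needed, obtained by supplying \cardone independent binary choices. If \cardone-many blocks are singletons I would route these singleton points freely into a fixed block of size $\ge2$ (no injectivity constraint arises, as the sources are distinct singleton blocks) while attaching the rest by a fixed transversal hooked to that block; all the varied cells share a \parti{P}-block, so connectivity survives and the $\ge 2^{\cardone}$ routings are distinct. Otherwise \cardone-many blocks have size $\ge2$ or there are fewer than \cardone blocks in all; then a single full transversal $C^{\ast}$ already forces (b), the leftover $R=\cardone\setsub C^{\ast}$ has size \cardone, and toggling a size-\cardone family of disjoint cross-block pairs inside $R$ (any transversal partition of $R$ extends $C^{\ast}$ to a complement) yields $2^{\cardone}$ of them. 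With $\bot$ and $\top$ each having the single complement $\top$, resp.\ $\bot$ (i.e.\ $\cardone^{0}$), I can assemble the global statements: every complement-count is $1=\cardone^{0}$, or $\cardone^{\card{\cardone\setsub B}}$ with $1\le\card{\cardone\setsub B}\le\cardone$, or $2^{\cardone}=\cardone^{\cardone}$, so exactly the values $\cardone^{\cardtwo}$ for $0\le\cardtwo\le\cardone$ occur and each is realised by an explicit \parti{P}; for non-trivial \parti{P} the exponent lies in $[1,\cardone]$, giving the range from \cardone to $2^{\cardone}$. The GCH clause is then pure cardinal arithmetic on the formula: under GCH, $\cardone^{\cardthree}=\cardone$ for $\cardthree<\cofinality{\cardone}$ and $\cardone^{\cardthree}=\cardone^{+}=2^{\cardone}$ for $\cofinality{\cardone}\le\cardthree\le\cardone$, whence a non-trivial \parti{P} has exactly \cardone complements iff it has a unique block $B$ of size \cardone with $\card{\cardone\setsub B}<\cofinality{\cardone}$, and $2^{\cardone}$ otherwise.

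The crux throughout is condition (b). In contrast to the transversal condition (a), which is local and stable under almost any manipulation of blocks, connectivity is global, and the constructions must preserve it while still exposing \cardone or $2^{\cardone}$ free choices; the hub construction is chosen precisely because connectivity there is automatic. The genuinely delicate point is the $2^{\cardone}$ lower bound for a \parti{P} with no block of size \cardone and no largest block — which forces \cardone singular with block sizes cofinal in \cardone — since then no hub can absorb the larger blocks and one must build a connected spine and verify that the leftover carries a cross-block matching of full size \cardone (a routine but not entirely trivial fact about complete multipartite graphs with all parts $<\cardone$). The encoding upper bound in the small case is the other step requiring care, as one must check that the recorded data reconstructs the complement uniquely.
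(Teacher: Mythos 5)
Your proposal is correct, and its core machinery coincides with the paper's: the reduction of complementarity to ``partial transversals of \parti{P} with connected incidence graph'', the injection-into-the-large-block lower bound together with an encoding upper bound to get exactly $\cardone^{\cardinal{\cardone\setsub B}}$ (the paper's Lemma~\ref{lem:oneblock_high}, whose upper bound counts $\sum_{\epsilon\le\cardtwo^2}2^\epsilon\cardone^\epsilon$ rather than your partial-map-plus-partition code, but to the same effect), and --- at the genuinely delicate point you correctly single out --- a family of $\cardone$ disjoint cross-block pairs built from blocks whose sizes are cofinal in a singular \cardone (the bijections $\sigma_\ordone$ in the paper's Lemma~\ref{lem:noblock_high}). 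The one real organizational difference is in the ``no block of size \cardone'' case: the paper first proves the standalone bound $\cardinal{\compl{\parti{P}}}\ge 2^{\cardinal{\parti{P}}}$ (Lemma~\ref{lem:general_condition_two}) via a two-anchor construction encoding an arbitrary subset of the blocks, which at once disposes of every subcase with $\cardinal{\parti{P}}=\cardone$ or \cardone regular, leaving only singular \cardone with $\cofinality{\cardone}\le\cardinal{\parti{P}}<\cardone$ for the matching argument; you instead split on whether \cardone-many blocks are singletons and exhibit the $2^\cardone$ independent choices directly (routing singleton points into a fixed block, or toggling cross-block pairs in the complement of a transversal). Both routes are sound; the paper's auxiliary lemma is marginally more economical and yields the extra inequality $\cardinal{\compl{\parti{P}}}\ge 2^{\cardinal{\parti{P}}}$ recorded in Theorem~\ref{thm:number_of_complements}(2), while your version proves only what the stated result needs. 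Your realizability argument and GCH clause agree with Corollary~\ref{cor:compl-existence} and Theorem~\ref{thm:GCHkappa_complements}.
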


\begin{result}[Full characterizations under GCH, Theorem \ref{cor:allGCH}]
  Under the Generalized Continuum Hypothesis, when $\cardone$ is an
  infinite cardinal:
  \begin{enumerate}
  \item Any maximal well-ordered chain in $\thelattice{\cardone}$ always
    has cardinality $\cardone$.
  \item Any general maximal chain in $\thelattice{\cardone}$ has cardinality
    \begin{enumerate}
    \item $\cardone^-$, $\cardone$, or $\cardone^+$ (and all three are
      always achieved) if $\cardone$ is a successor cardinal; and
    \item either $\cardone$ or $\cardone^+$ (and both are achieved) if
      $\cardone$ is a limit cardinal.
    \end{enumerate}
  \item Any non-trivial maximal antichain in $\thelattice{\cardone}$ has
    cardinality either $\cardone$ or $\cardone^+$, and both are
    achieved.
  \item Any non-trivial partition has either $\cardone$ or
    $\cardone^+$ complements.  $\parti{P}\notin\{\bot,\top\}$ has
    $\cardone$ complements if and only if (i) $\parti{P}$ contains
    exactly one block, $B$, of cardinality $\cardone$, and (ii)
    $\cardinal{\cardone\setsub B} < \cofinality{\cardone}$; otherwise,
    $\parti{P}$ has $\cardone^+$ complements.
  \end{enumerate}
\end{result}

\section{Some basic properties}
\subsection{Saturated chains in complete lattices}
In this section, we prove a few properties of chains on complete
lattices (not necessarily the partition lattice) that will
be used in the later sections.

\begin{definition}
  \label{def:upper-lower}
  Let $\chain{C}$ be a chain in a complete lattice $\poset{L}$.
  Define the {\em lower}, respectively {\em upper}, subchain relative
  to $x\in\poset{L}$ as
  $\lowerchain{C}{x} = \setof{y\in\chain{C}}{y<x}$ and
  $\upperchain{C}{x} = \setof{y\in\chain{C}}{x<y}$.
\end{definition}

\begin{lemma}
  \label{lem:upperlower}
  Let $\chain{C}$ be a closed chain in a complete lattice $\poset{L}$,
  and $x\in\poset{L}$ such that $\chain{C}\cup \{x\}$ is a chain, and
  the sets $\lowerchain{C}{x}$ and $\upperchain{C}{x}$ are nonempty.
  \begin{itemize}
  \item If $x \notin \chain{C}$, then $\Cminus \prec_{\chain{C}}
    \Cplus$.
  \item If $x \in \chain{C}$, then either (i) $\Cminus = x = \Cplus$,
    (ii) $\Cminus \prec_{\chain{C}} \Cplus$, or (iii)
    $\Cminus \prec_{\chain{C}} x \prec_{\chain{C}} \Cplus$.
  \end{itemize}
  This also implies that when $x \in \chain{C}$,
  $\Cminus \preceq_{\chain{C}} x \preceq_{\chain{C}} \Cplus$.
\end{lemma}

This is easily proved by checking each case. The key points are: i)
Because the chain $\chain{C}$ is closed, there exists no
$y \in \chain{C}$ with $\Cminus < y < \Cplus$; and ii)
$x\in \chain{C} \Rightarrow \Cminus ≤ x ≤ \Cplus$.

\begin{lemma}
  \label{lem:closed-covering-saturated}
  An endpoint-including chain $\chain{C}$ in a complete lattice
  $\poset{L}$ is saturated if and only if it is closed and covering;
  and it is maximal if and only if it is complete and covering.
\end{lemma}

\begin{proof}
  Let in the following $\chain{C}$ be a chain in a complete lattice
  $\poset{L}$, such that $\chain{C}$ has minimal element $c_{\min}$
  and maximal element $c_{\max}$. Since $\chain{C}$ is totally
  ordered, $c_{\min} = \glb\chain{C}$ is its unique least element, and
  $c_{\max} = \lub\chain{C}$ is its unique greatest element.

  \textbf{Saturated implies closed.}  Assume that there exists a
  nonempty subset $\poset{D}\subseteq\chain{C}$ with greatest lower
  bound $\glb\poset{D}\notin\chain{C}$. By construction,
  \begin{equation}\label{Eq:chain}
    c_{\min} = \glb \chain{C} < \glb \poset{D} < \lub \chain{C} =
    c_{\max}
  \end{equation}
  Let $x$ be an arbitrary element of $\poset{C}$. Since $\poset{C}$ is
  a chain, either $d ≤ x$ for some $d \in \poset{D}$, which implies
  $\glb\poset{D} ≤ x$; or $x < d$ for all $d \in \poset{D}$ which
  implies $x ≤ \glb\poset{D}$. It follows that
  $\chain{C} \cup \glb\poset{D}$ is a chain. This together with
  Equation~\eqref{Eq:chain} contradicts that $\chain{C}$ is saturated.
  Dually we prove that $\lub\poset{D} \in \poset{C}$ for every
  nonempty $\poset{D} \subseteq \poset{C}$. Hence $\chain{C}$ is
  saturated only if it is closed.

  \textbf{Saturated implies covering.}  Assume $\chain{C}$ is not
  covering, i.e.~ there exists $x, z \in \chain{C}$ with
  $x \prec_{\chain{C}} z$ but $x \not \prec_{\poset{L}} z$. The second
  relation implies that there exists $y\in\poset{L}\setsub\chain{C}$
  with $x < y < z$.  Because $x \prec_{\chain{C}} z$, every other
  element of $\chain{C}$ is either smaller than $x$ or larger than
  $z$, hence comparable with $y$, whereby $\chain{C}\cup\{y\}$ is a
  chain.  Hence $\chain{C}$ is saturated only if it is covering.

  \textbf{Closed and covering implies saturated.}  Assume that
  $\chain{C}$ is closed and covering, and choose any $x\in\poset{L}$
  with $c_{\min} < x < c_{\max}$ for which $\chain{C}\cup\{x\}$ is
  still a chain. Then $c_{\min}\in\lowerchain C x$ and
  $c_{\max}\in\upperchain C x$, so both sets are nonempty.
  If $x \notin \chain{C}$, Lemma~\ref{lem:upperlower}
  implies $\Cminus \prec_{\chain{C}} \Cplus$, and, because $\chain{C}$
  is covering, $\Cminus \prec \Cplus$.  But $\Cminus ≤ x ≤ \Cplus$, so
  this implies $x = \Cminus$ or $x = \Cplus$, contradicting
  $x \notin \chain{C}$, as both are in
  $\chain{C}$. Hence, $\chain{C}$ is saturated.

  \textbf{Maximal is equivalent to complete and covering.}  A maximal
  chain is a saturated chain that contains $\top$ and $\bot$; and a
  complete sublattice is a closed sublattice that contains $\top$ and
  $\bot$, yielding the lemma's second statement.
\end{proof}

\subsection{Meets and joins of chains in $\thelattice{\cardone}$}
\begin{definition}\label{def:xy-upper-lower}
  Let $\chain{C}$ be a chain in $\thelattice{\cardone}$. We define the
  \emph{lower} and \emph{upper} subchains relative to
  $x, y \in \cardone$ as
  $\lowerchain{C}{x,y} = \setof{\parti{P} \in
    \chain{C}}{\noteqinpart{x}{\parti{P}}{y}}$
  and
  $\upperchain{C}{x,y} = \setof{\parti{P} \in
    \chain{C}}{\eqinpart{x}{\parti{P}}{y}}$;
\end{definition}

\begin{lemma}\label{lem:meet-join-chain}
  Let $\chain{C}$ be a chain in $\thelattice{\cardone}$ and
  $\chain{D}$ be any set of $\cardone$-partitions.
  \begin{enumerate}
  \item $\eqinpart{x}{\glb \chain{D}}{y}$ if and only if
    $\eqinpart{x}{\parti{P}}{y}$ for all $\parti{P} \in \chain{D}$;
  \item $\eqinpart{x}{\lub \chain{C}}{y}$ if and only if
    $\eqinpart{x}{\parti{P}}{y}$ for some $\parti{P} \in \chain{C}$;
  \item[(3)] If $\chain{C}$ is closed, given elements
    $x,y \in \cardone$ for which $\lowerchain{C}{x,y}$ and
    $\upperchain{C}{x,y}$ are both nonempty, we have
    \begin{enumerate}
    \item $\glb \upperchain{C}{x,y} \in\upperchain{C}{x,y}$,
    \item $\lub \lowerchain{C}{x,y} \in\lowerchain{C}{x,y}$,
    \item
      $\lub \lowerchain{C}{x,y} \prec_{\chain{C}}
      \glb\upperchain{C}{x,y}$
      (and, if $\chain{C}$ is covering,
      $\lub\lowerchain{C}{x,y} \prec \glb \upperchain{C}{x,y}$).
    \end{enumerate}
  \end{enumerate}
\end{lemma}

\begin{proof}\
  \begin{enumerate}
  \item This is the definition of $\glb \chain{D}$.
  \item By definition of $\eqinpart{x}{\lub \chain{C}}{y}$, there
    exist {\em finite} sequences $\{x^i\}$ and $\{\parti{P}^i\}$ such
    that $x = \eqinpart{%
      \eqinpart{%
        \eqinpart{%
          \eqinpart{x^0}{\parti{P}^1}{x^1}}%
        {\parti{P}^2}{x^2}}%
      {\parti{P}^3}{\cdots}}%
    {\parti{P}^n}{x^n} = y$.
    But because $\chain{C}$ is totally ordered, the finite set
    $\{\parti{P}^i\} \subset \chain{C}$ has a greatest element
    $\parti{P}^k$, and thus
    $\eqinpart{x_0}{\parti{P}^k}{\eqinpart{x_1}{\parti{P}^k}{\eqinpart{\cdots}{\parti{P}^k}{x_n}}}$,
    whereby $\eqinpart{x}{\parti{P}^k}{y}$.

  \item[(3)] (a) follows immediately from (1), and (b) from the
    negation of (2). For (c), notice that (a) and (b) imply
    $\lub\lowerchain{C}{x,y} \neq \glb\upperchain{C}{x,y}$, whereby
    $\lub\lowerchain{C}{x,y} < \glb\upperchain{C}{x,y}$.  Because
    every $\parti{P}\in\chain{C}$ lies either in $\lowerchain{C}{x,y}$
    or $\upperchain{C}{x,y}$, no $\parti{P}\in\chain{C}$ can have the
    property
    $\lub\lowerchain{C}{x,y} < \parti{P} < \glb\upperchain{C}{x,y}$,
    whereby
    $\lub\lowerchain{C}{x,y} \prec_{\chain{C}}
    \glb\upperchain{C}{x,y}$.
    The final statement is simply the definition of covering.
  \end{enumerate}
\end{proof}

\begin{corollary}\label{cor:lub-blocks-union}
  Given a non-empty chain $\chain{C}$, each block of $\lub \chain{C}$
  is the union, and each block of $\glb\chain{C}$ is the intersection,
  of an increasing sequence of blocks, one from each
  $\parti{P} \in \chain{C}$.
\end{corollary}

\begin{proof}
  Fix $x \in \cardone$, and for each $\parti{P} \in \chain{C}$ let
  $B_{\parti{P}}$ be the block of $\parti{P}$ containing $x$. Let $B$
  (resp. $B'$) be the block of $\glb\chain{C}$ (resp. $\lub\chain{C}$)
  that contain $x$. The second equivalence in each case are
  Lemma~\ref{lem:meet-join-chain}(1-2).
  The statement for the greatest lower bound is derived as
  \[y \in B \Leftrightarrow %
  \eqinpart{x}{\glb\chain{C}}{y} \Leftrightarrow %
  \forall \parti{P} \in \chain{C}, \eqinpart{x}{\parti{P}}{y}
  \Leftrightarrow %
  \forall \parti{P} \in \chain{C}, y \in B_{\parti{P}}
  \Leftrightarrow %
  y \in  \bigcap_{\parti{P}\in\chain{C}} B_{\parti{P}}
  \vspace{-.2cm}
  \]
  and for the least upper bound as
  \[y \in B' \Leftrightarrow %
  \eqinpart{x}{\lub\chain{C}}{y} \Leftrightarrow %
  \exists \parti{P} \in \chain{C}, \eqinpart{x}{\parti{P}}{y}
  \Leftrightarrow %
  \exists \parti{P} \in \chain{C}, y \in B_{\parti{P}}
  \Leftrightarrow %
  y \in  \bigcup_{\parti{P}\in\chain{C}} B_{\parti{P}}
  \vspace{-.2cm}
  \]
\end{proof}

Notice that the choice of $x$, hence also of the $B_{\parti{P}}$ in
the union, is far from unique. These lemmas essentially state that
``nothing happens when going to the limit''. All the equivalences
between elements that are present in the limit (\emph{e.g.}, the join)
were actually already here in some partition of the chain. Notably, if
the chain is well-ordered, the ``merge'' between two elements must
happen between a (partition indexed by an) ordinal and its successor,
and does not suddenly appear at a (partition indexed by a) limit
ordinal.

\subsection{Restriction}
\begin{definition}[Restriction]
  Given a partition $\parti{P}$ of $\cardone$, we define its {\em
    restriction to $\cardtwo\le\cardone$} as
  $\restrict{\parti{P}} =
  \setof{\blockrestrict{B}{\cardtwo}}{B\in\parti{P}} \setsub
  \{\emptyset\}$,
  which is a partition of $\cardtwo$.  Similarly, a set $\poset{D}$ of
  partitions restricts to a subset of $\thelattice{\cardtwo}$ as
  $ \restrict{\poset{D}} =
  \setof{\restrict{\parti{P}}}{\parti{P}\in\poset{D}}$.
\end{definition}

\begin{lemma}\label{lem:restriction} (i)
  $\restrict{\top_{\cardone}} = \top_{\cardtwo}$; %
  (ii) $\restrict{\bot_{\cardone}} = \bot_{\cardtwo}$; %
  (iii) $\parti{P} < \parti{Q}$ implies
  $\restrict{\parti{P}} \le \restrict{\parti{Q}}$; %
  (iv) if $\chain{C}$ is a chain, so is $\restrict{\chain{C}}$; %
  (v) if $\parti{P},\parti{Q}$ are comparable, then
  $\restrict{\parti{P}} < \restrict{\parti{Q}}$ implies
  $\parti{P} < \parti{Q}$; and %
  (vi) $\parti{P} \prec \parti{Q}$ implies
  $\restrict{\parti{P}} \preceq \restrict{\parti{Q}}$.
\end{lemma}

All these Facts can be checked easily by direct applications of the
definitions.

\begin{lemma}%TODO: Something about continuity
  \label{lem:restrict-lub}
  If $\chain{C}$ is a chain in $\thelattice{\cardone}$ and
  $\cardtwo\le \cardone$, then
  $\restrict{(\lub\chain{C})} =
  \lub\left(\restrict{\chain{C}}\right)$,
  and
  $\restrict{(\glb\chain{C})} =
  \glb\left(\restrict{\chain{C}}\right)$.
\end{lemma}

\begin{proof}
  Let $\cardtwo \le \cardone$, and $\chain{C}$ be a chain in
  $\thelattice{\cardone}$.
  If $\chain{C}$ is empty, then the result follows directly
  from Lemma \ref{lem:restriction}(i-ii) and the definition
  of meet and join on the empty set.

  Assume now that $\chain{C}$ is nonempty, and fix $x \in \cardtwo$. For each
  $\parti{P} \in \chain{C}$, let $B_{\parti{P}}$ be the block of
  $\parti{P}$ containing $x$, and $B_{\tilde{\parti{P}}}$ be the block
  of $\tilde{\parti{P}} = \restrict{\parti{P}}$ containing
  $x$.

  \noindent \textbf{Greatest lower bound.} Let $B$ be the block in
  $\glb \chain{C}$, and $B_\cardtwo$ be the block in
  $\glb \left(\restrict{\chain{C}}\right)$, that contains
  $x$. Applying Corollary~\ref{cor:lub-blocks-union} to both
  $B_\cardtwo$ and to $B$ yields
  \[B_\cardtwo = %
  \bigcap_{\tilde{\parti{P}} \in \restrict{\chain{C}}}
  B_{\tilde{\parti{P}}} = %
  \bigcap_{\parti{P} \in \chain{C}} B_{\tilde{\parti{P}}} = %
  \bigcap_{\parti{P} \in \chain{C}} (B_{\parti{P}} \cap \cardtwo) = %
  (\bigcap_{\parti{P} \in \chain{C}} B_{\parti{P}}) \cap \cardtwo = %
  B \cap \cardtwo
  \]
  Note that restriction is not injective, hence we may have
  $\parti{P} \neq \parti{P'}$ but
  $\restrict{\parti{P}} = \restrict{\parti{P'}}$. In this case,
  $B_{\tilde{\parti{P}}} = B_{\tilde{\parti{P'}}}$, thus the second
  equality above is correct.

  As this holds for every $x \in \cardtwo$, each block of
  $\glb \left(\restrict{\chain{C}}\right)$ is the restriction of a
  block of $\glb \chain{C}$, therefore
  $\glb \left(\restrict{\chain{C}}\right) = \restrict{(\glb
    \chain{C})}$.

  \noindent \textbf{Least upper bound:} The proof is symmetrical with
  key equalities being:
  \[B_\cardtwo = %
  \bigcup_{\tilde{\parti{P}} \in \restrict{\chain{C}}}
  B_{\tilde{\parti{P}}} = %
  \bigcup_{\parti{P} \in \chain{C}} B_{\tilde{\parti{P}}} = %
  \bigcup_{\parti{P} \in \chain{C}} (B_{\parti{P}} \cap \cardtwo) = %
  (\bigcup_{\parti{P} \in \chain{C}} B_{\parti{P}}) \cap \cardtwo = %
  B \cap \cardtwo
  \]
\end{proof}

\begin{lemma}
  \label{lem:chain-restrict-wo-compl}
  Restriction preserves well-order and completeness of chains.
\end{lemma}

\begin{proof}
  Let $\cardtwo ≤ \cardone$ and $\chain{C}$ be a chain in
  $\thelattice{\cardone}$. By Lemma~\ref{lem:restriction}(iv),
  $\restrict{\chain{C}}$ is a chain in $\thelattice{\cardtwo}$. Let
  $\chain{D}_{\cardtwo} \subseteq \restrict{\chain{C}}$ be any,
  possibly empty, subset. We have
  $\chain{D}_{\cardtwo} = \restrict{\chain{D}}$, with
  $\chain{D} = \setof{\parti{P} \in \chain{C} }{ \restrict{\parti{P}}
    \in \chain{D}_{\cardtwo}}$.
  Recall that subsets of chains are chains themselves, hence by
  Lemma~\ref{lem:restrict-lub},
  $\lub \chain{D}_{\cardtwo} = \lub \left(\restrict{\chain{D}}\right)
  = \restrict{(\lub\chain{D})}$,
  and
  $\glb \chain{D}_{\cardtwo} = \glb \left(\restrict{\chain{D}}\right)
  = \restrict{(\glb\chain{D})}$.

  \noindent\textbf{Well-order:}
  If $\chain{C}$ is well-ordered, then $\glb \chain{D} \in \chain{D}$
  is its least element. By construction of $\chain{D}$,
  $\glb \chain{D}_{\cardtwo}  =\restrict{\left(\glb \chain{D}\right)}\in \chain{D}_{\cardtwo}$
  which is thus the least element of $\chain{D}_\cardtwo$. Hence, $\restrict{\chain{C}}$ is
  well-ordered.

  \noindent\textbf{Completeness:}
  If $\chain{C}$ is complete, then $\glb \chain{D} \in \chain{C}$.
  Hence,
  $\glb \chain{D}_{\cardtwo} = \restrict{(\glb\chain{D})} \in
  \restrict{\chain{C}}$.
  Similarly, $\lub \chain{D}_{\cardtwo} \in
  \restrict{\chain{C}}$. Thus, $\restrict{\chain{C}}$ is complete.
\end{proof}

\begin{lemma}\label{lem:chain-restrict-max}
  Restriction preserves maximality of chains.
\end{lemma}

\begin{proof}
  Because of the nature of the proof, we explicitly tell in which set
  of partitions the inequalities hold (even when it is the whole
  lattice). We abusively write $\parti{P} <_{\cardtwo} \parti{Q}$ (and
  so on) instead of $\parti{P} <_{\thelattice{\cardtwo}} \parti{Q}$,
  for the sake of clarity.

  Let $\cardtwo ≤ \cardone$ and $\chain{C}$ be a maximal chain in
  $\thelattice{\cardone}$. It is complete and covering by
  Lemma~\ref{lem:closed-covering-saturated}. Hence,
  $\restrict{\chain{C}}$ is a complete chain due to
  Lemma~\ref{lem:chain-restrict-wo-compl} and we only need to show
  that it is also covering.

  Let $\tilde{\parti{P}}, \tilde{\parti{Q}} \in \restrict{\chain{C}}$
  such that
  $\tilde{\parti{P}} \prec_{\restrict{\chain{C}}} \tilde{\parti{Q}}$.
  By construction, there exist $x,y \in \cardtwo$ such that
  $\noteqinpart{x}{\tilde{\parti{P}}}{y}$ and
  $\eqinpart{x}{\tilde{\parti{Q}}}{y}$, whereby
  $\tilde{\parti{P}} \in \lowerchain{(\restrict{C})}{x,y}$ and
  $\tilde{\parti{Q}} \in \upperchain{(\restrict{C})}{x,y}$. Note that
  because both $x$ and $y$ are in $\cardtwo$, we have
  $\lowchain{(\restrict{\chain{C}})}{x,y} =
  \restrict{\lowerchain{C}{x,y}}$
  and
  $\uppchain{(\restrict{\chain{C}})}{x,y} =
  \restrict{\upperchain{C}{x,y}}$.

  By definition of restrictions, there exist (non-unique)
  $\parti{P}, \parti{Q} \in \chain{C}$ such that
  $\tilde{\parti{P}} = \restrict{P}$ and
  $\tilde{\parti{Q}} = \restrict{\parti{Q}}$, and we must have
  $\noteqinpart{x}{\parti{P}}{y}$ and $\eqinpart{x}{\parti{Q}}{y}$,
  whereby $\parti{P} \in \lowerchain{C}{x,y}$ and
  $\parti{Q} \in \upperchain{C}{x,y}$. Since $\chain{C}$ is maximal,
  it is complete and covering, hence
  Lemma~\ref{lem:meet-join-chain}(3)(c) yields
  $\lub\lowerchain{C}{x,y} \prec_{\cardone} \glb\upperchain{C}{x,y}$.
  Because they differ on $x,y\in\cardtwo$, we have
  $\restrict{(\lub\lowerchain{C}{x,y})} \prec_{\cardtwo}
  \restrict{(\glb\upperchain{C}{x,y})}$
  by Lemma~\ref{lem:restriction}(vi), hence
  Lemma~\ref{lem:restrict-lub} yields
  $\lub \left(\restrict{\lowerchain{C}{x,y}}\right) \prec_{\cardtwo}
  \glb \left(\restrict{\upperchain{C}{x,y}}\right)$,
  i.e.,
  $\lub \lowchain{(\restrict{\chain{C}})}{x,y} \prec_{\cardtwo} \glb
  \uppchain{(\restrict{\chain{C}})}{x,y}$.

  By completeness of $\restrict{\chain{C}}$, it contains both
  $\lub \lowchain{(\restrict{\chain{C}})}{x,y}$ and
  $\glb \uppchain{(\restrict{\chain{C}})}{x,y}$, thus
  $\lub \lowchain{(\restrict{\chain{C}})}{x,y}
  \prec_{\restrict{\chain{C}}} %
  \glb \uppchain{(\restrict{\chain{C}})}{x,y}$.
  Because
  $\tilde{\parti{P}} \in \lowchain{(\restrict{\chain{C}})}{x,y}$, we
  have
  $\tilde{\parti{P}} ≤_{\cardtwo} \lub
  \lowchain{(\restrict{\chain{C}})}{x,y}$
  and similarly,
  $\glb \uppchain{(\restrict{\chain{C}})}{x,y} ≤_{\cardtwo}
  \tilde{\parti{Q}}$,

  Because
  $\tilde{\parti{P}} \prec_{\restrict{\chain{C}}} \tilde{\parti{Q}}$
  by definition, we must have $\tilde{\parti{P}} = %
  \lub \lowchain{(\restrict{\chain{C}})}{x,y}
  \prec_{\restrict{\chain{C}}} %
  \glb \uppchain{(\restrict{\chain{C}})}{x,y} = \tilde{\parti{Q}}$,
  therefore $\tilde{\parti{P}} \prec_{\cardtwo} \tilde{\parti{Q}}$ and
  $\restrict{\chain{C}}$ is covering.
  Being both complete and covering, $\restrict{\chain{C}}$ is
  maximal.
\end{proof}

\section{Well-ordered chains in $\thelattice{\cardone}$}
\label{sec:well-ordered}
For finite $\cardone = n$, it is immediate that any maximal chain in
$\thelattice{n}$ has cardinality $n$: Each step reduces the number of
blocks by $1$, whereby going from $\bot$ with $n$ blocks to $\top$
with one block requires $n-1$ steps, hence $n$ elements in the chain.
For $n ≥ 3$, maximal chains are not unique.  In this section, we show
that maximal well-ordered chains in $\thelattice{\cardone}$ {\em
  always} have cardinality $\cardone$, whether $\cardone$ is finite or
infinite.

If a maximal chain in $\thelattice{\cardone}$ is well-ordered of order
type $\ordone$, then $\ordone$ is a successor ordinal.\footnotemark
For clarity, we will write the order type of a maximal chain as
$\ordone+1$ to emphasize that it is a successor ordinal. Because
$\card{\ordone} = \card{\ordone + 1}$, this has no impact on the
cardinality of the chain. Such chains can be written as
$\chain{C} = \setof{\parti{P}_{\ordtwo}}{\ordtwo ≤ \ordone}$ --- or as
$\chain{C} = \setof{\parti{P}_{\ordtwo}}{\ordtwo < \ordone+1}$ to
emphasize the order type --- with $\parti{P}_{0} = \bot$ and
$\parti{P}_{\ordone} = \top$.%
\footnotetext{Any maximal chain in $\thelattice{\cardone}$ has a
  maximal element, namely $\top$.  But since a well-ordered set of
  limit-ordinal type has no maximal element, this implies that the
  order type of a well-ordered maximal chain must be a
  successor-ordinal.}

\begin{lemma}\label{lem:wf-cardinal}
  Let $\cardone$ be any cardinal. If a chain in
  $\thelattice{\cardone}$ is well-ordered of order type $\ordone$,
  then $\card{\ordone} ≤ \cardone$.
\end{lemma}

Note that we are here speaking of any well-ordered chain, not
necessarily a maximal one, so its order type may be anything.

\begin{proof}
  The lemma holds trivially when $\cardone$ is finite, as detailed above. 
  Assume now that $\cardone$ is an infinite cardinal.
  As $\thelattice{\cardone}$ is isomorphic to $\equ{\cardone}$, the
  set of equivalence relations on $\cardone$ ordered by $\subseteq$,
  there is, for any chain of order type $\ordone$ in
  $\thelattice{\cardone}$, a chain of order type $\ordone$ in the
  poset $(\powerset{\cardone \times \cardone},\subseteq)$. Let
  $\chain{C} = \setof{\parti{E}_{\ordtwo}}{\ordtwo < \ordone}$ be such
  a chain and observe that for every $\ordtwo$ with
  $\ordtwo+1 < \ordone$ there exists at least one
  $\ordfour_\ordtwo \in \parti{E}_{\ordtwo + 1}
  \setsub \parti{E}_{\ordtwo}$.
  As $\chain{C}$ is totally ordered under $\subseteq$, this implies
  $\ordfour_\ordtwo\notin\parti{E}_{\ordtwo'}$ when
  $\ordtwo'<\ordtwo$, i.e. the $\ordfour_\ordtwo$ do not repeat.
  Hence,
  $\setof{\ordfour_\ordtwo}{\ordtwo+1 < \ordone} \subseteq \cardone
  \times \cardone$
  has cardinality $\card{\ordone}$, implying
  $\card{\ordone} ≤ \cardinal{\cardone \times \cardone} =
  \cardone$.
\end{proof}

\begin{lemma}\label{lem:wf-cofinality}
  Every well-ordered maximal chain of order type $\ordone+1$ in
  $\thelattice{\cardone}$ satisfies
  $\cofinality{\cardone} ≤ \cardinal{\ordone}$.
\end{lemma}

\begin{proof}
  Let $\chain{C} = \setof{\parti{P}_{\ordtwo}}{\ordtwo < \ordone + 1}$
  be a maximal well-ordered chain in $\thelattice{\cardone}$ of order
  type $\ordone+1$. Consider the partitions in this chain that have at
  least one block of cardinality $\cardone$. Since
  $\top = \parti{P}_{\ordone}$, there is at least one such
  partition. Let $\ordthree$ be the least ordinal such that
  $\parti{P}_{\ordthree}$ contains a block of cardinality
  $\cardone$. It exists, because every non-empty set of ordinals has a
  least element. Let $B^{\ordthree}$ be a block of cardinality
  $\cardone$ in $\parti{P}_{\ordthree}$.

  By Lemma~\ref{lem:upperlower} and maximality of $\chain{C}$ we have
  $\lub\lowerchain{C}{\parti{P}_{\ordthree}}\preceq \parti{P}_{\ordthree}$,
  and $\lub\lowerchain{C}{\parti{P}_{\ordthree}}\in\chain{C}$. That
  is, we can write
  $\parti{P}_{\ordfour} = \lub\lowerchain{C}{\parti{P}_{\ordthree}}$
  either for $\ordfour=\ordthree$ or $\ordfour+1=\ordthree$.  Consider
  for the sake of contradiction that
  $\parti{P}_{\ordfour} \prec \parti{P}_{\ordthree}$.  Then
  $B^{\ordthree}$ would be either a block of $\parti{P}_{\ordfour}$ or
  the union of two such blocks, at least one of them with cardinality
  $\cardone$.  Both cases contradict the hypothesis of $\ordthree$
  being the smallest ordinal for which $\parti{P}_{\ordthree}$
  contains a block of cardinality $\cardone$.  Hence
  $\ordthree=\ordfour$, and
  $\parti{P}_{\ordthree} = \lub\lowerchain{C}{\parti{P}_{\ordthree}} =
  \lub_{\ordtwo < \ordthree} \parti{P}_{\ordtwo}$.

  Corollary~\ref{cor:lub-blocks-union} implies that a sequence
  $\{B^{\ordtwo}\}_{\ordtwo<\ordthree}$ exists such that  
  $B^{\ordthree} = \bigcup_{\ordtwo<\ordthree} B^{\ordtwo}$.  By
  definition of $\ordthree$, we have $\card{B^{\ordtwo}} < \cardone$
  for every $\ordtwo<\ordthree$, and so by the definition of
  co-finality,
  $\cofinality{\cardone} \le \card{\ordthree} \le \card{\ordone}$
\end{proof}

\begin{corollary}\label{cor:wf-regular}
  If $\cardone$ is a regular cardinal, then every well-ordered maximal
  chain in $\thelattice{\cardone}$ indexed by an ordinal $\ordone$
  satisfies $\cardinal{\ordone} = \cardone$.
\end{corollary}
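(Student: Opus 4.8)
The plan is to obtain the corollary as an immediate consequence of Theorems~\ref{thm:wf-cardinal} and~\ref{thm:wf-cofinality}, squeezing $\card{\ordone}$ between $\cofinality{\cardone}$ and $\cardone$ and then invoking regularity. No new construction is needed; everything has already been established by the two preceding theorems.

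First I would record that a maximal chain necessarily contains $\top$ and hence has a largest element, so a well-ordered maximal chain indexed by $\ordone$ has $\ordone$ a successor ordinal; write $\ordone = \ordtwo + 1$. Applying Theorem~\ref{thm:wf-cardinal} directly to this chain, which has order type $\ordone$, yields the upper bound $\card{\ordone} \leq \cardone$.

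Next I would apply Theorem~\ref{thm:wf-cofinality}, whose statement concerns maximal chains of order type $\ordtwo + 1$, to conclude $\card{\ordtwo} \geq \cofinality{\cardone}$. Since $\cofinality{\cardone}$ is infinite for infinite $\cardone$, the ordinal $\ordtwo$ is infinite, so $\card{\ordone} = \card{\ordtwo + 1} = \card{\ordtwo} \geq \cofinality{\cardone}$. Combining the two bounds gives $\cofinality{\cardone} \leq \card{\ordone} \leq \cardone$, and regularity of $\cardone$ means precisely that $\cofinality{\cardone} = \cardone$, which forces $\card{\ordone} = \cardone$.

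I do not expect any genuine obstacle here; the argument is pure bookkeeping. The only two points requiring a moment of care are matching the successor-ordinal indexing across the two theorems, so that the $\ordtwo$ appearing in Theorem~\ref{thm:wf-cofinality} is identified with the predecessor of the corollary's $\ordone$, and observing that the passage from $\ordtwo$ to $\ordtwo + 1$ leaves the cardinality unchanged, which is legitimate exactly because $\ordtwo$ is infinite.
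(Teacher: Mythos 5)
Your proof is correct and is exactly the argument the paper intends: the corollary is stated without proof immediately after Theorems~\ref{thm:wf-cardinal} and~\ref{thm:wf-cofinality}, as the squeeze $\cofinality{\cardone} \leq \card{\ordone} \leq \cardone$ combined with $\cofinality{\cardone} = \cardone$ for regular $\cardone$. Your care with the successor-ordinal indexing and the observation that $\card{\ordtwo+1} = \card{\ordtwo}$ for infinite $\ordtwo$ is exactly the bookkeeping needed to reconcile the two theorems' statements.
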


\begin{theorem}\label{thm:wf-all}
  If $\cardone$ is any infinite cardinal, then every well-ordered maximal
  chain of order type $\ordone+1$ in $\thelattice{\cardone}$ satisfies
  $\card{\ordone} = \cardone$.
\end{theorem}

\begin{proof}
  Because of the previous results, we only need to check that 
  $\card{\ordone} ≥ \cardone$ whenever $\cardone$ is singular. 
  Indeed, the
  case of regular $\cardone$ is handled by
  Corollary~\ref{cor:wf-regular} and the upper bound is due to
  Lemma~\ref{lem:wf-cardinal}.

  \noindent\textbf{Cardinal arithmetic:}  It suffices to show that
  $\cardtwo ≤ \card{\ordone}$ for every regular $\cardtwo < \cardone$:
  By standard cardinal arithmetic, the singular $\cardone$ can be
  expressed as the sum of $\cofinality{\cardone}$ smaller successor
  (hence regular) cardinals $\cardtwo_{\ordthree}$. If the above
  inequality holds, then:
  \[
  \cardone = \sum_{\ordthree ≤ \cofinality{\cardone}}
  \cardtwo_{\ordthree} ≤ \sum_{\ordthree ≤
    \cofinality{\cardone}} \card{\ordone} = \cofinality{\cardone}
  \cdot \card{\ordone} = \max(\cofinality{\cardone}, \card{\ordone})
  \]
  And because $\cardone > \cofinality{\cardone}$ by singularity, we
  can finally conclude that $\cardone ≤ \card{\ordone}$.

  \noindent\textbf{Cardinality:}  Let $\chain{C}$ be a maximal, well-ordered
  chain in $\thelattice{\cardone}$.  By
  Lemmas~\ref{lem:chain-restrict-wo-compl}
  and~\ref{lem:chain-restrict-max}, $\restrict{\chain{C}}$ is a
  maximal well-ordered chain in $\thelattice{\cardtwo}$ for every
  regular $\cardtwo<\cardone$. Hence, by
  Corollary~\ref{cor:wf-regular}, we have
  $\cardinal{\restrict{\chain{C}}} = \cardtwo$. Moreover, by
  construction,
  $\cardinal{\restrict{\chain{C}}} ≤ \cardinal{\chain{C}} =
  \card{\ordone}$.
  Thus, for any regular $\cardtwo < \cardone$ we have
  $\cardtwo ≤ \card{\ordone}$ and we can conclude that
  $\cardone ≤ \card{\ordone}$, whereby $\cardone = \card{\ordone}$.
\end{proof}

\begin{remark}
  Notice that any ordinal $\ordone+1$ with $\card{\ordone} = \cardone$
  appears as the order type of some maximal well-ordered chain in
  $\thelattice{\cardone}$. Indeed, the chain of singular partitions
  in $\thelatticeweak{\ordone+1}$ with non-singleton block 
  $B_{\ordtwo} = \setof{\ordthree}{\ordthree ≤ \ordtwo}$ works. In
  other words, it suffices to find a well-order of order type
  $\ordone+1$ on $\cardone$ (which exists by a cardinality argument)
  and add the elements to a single block in this order. 
\end{remark}

\section{Long chains in $\thelattice{\cardone}$}
\label{sec:long-chains}
% diagonal partition.

For any nonempty $S\subseteq\cardone$, we define the partition
\[\diag{S} = \{S\}\cup \setof{\{\ordfour\}}{\ordfour\in\cardone\setsub
  S}.
\]
When $\cardinal{S} ≥ 2$, $\diag{S}$ is the singular partition with
non-singular block $S$.

\begin{remark}
  \label{rem:diagonal-bijection}
  It is easy to verify that $\diag{-}$ is an order isomorphism
  between subsets of $\cardone$ of size $≥ 2$ and the singular
  partitions.  In particular, chains in
  $(\powerset{\cardone},\subseteq)$ that contain only sets of length
  $≥ 2$ are mapped to chains with the same cardinality in
  $(\thelattice{\cardone},≤)$.
\end{remark}

\begin{theorem}\label{thm:long-nochainreach}
  Let $\cardone$ be an infinite cardinal. There is a chain of
  cardinality $> \cardone$ in $\thelattice{\cardone}$. The chain may
  be chosen to be maximal.
\end{theorem}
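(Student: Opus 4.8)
The plan is to transport the problem into the subset lattice via Remark~\ref{rem:diagonal-bijection} and then exhibit a long chain of subsets. Since \diag{\cdot} is an order isomorphism from the subsets of \cardone of size $\ge 2$ onto the singular partitions, and preserves the cardinality of chains, it suffices to produce a chain $\mathcal{A}$ in $(\powerset{\cardone},\subseteq)$ with $\card{\mathcal{A}} > \cardone$ all of whose members have at least two elements; its image $\setof{\diag{A}}{A \in \mathcal{A}}$ is then a chain of the same cardinality in \thelattice{\cardone}. The condition on block sizes is essentially free: the sets of size $\le 1$ number at most \cardone (the empty set and the \cardone singletons), so discarding them from a chain of size $> \cardone$ leaves a sub-chain that still has size $> \cardone$.

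First I would record the obstruction that dictates the shape of the construction. By Theorem~\ref{thm:wf-cardinal}, any well-ordered chain in \thelattice{\cardone}, equivalently in $(\powerset{\cardone},\subseteq)$, has cardinality at most \cardone. Hence the chain $\mathcal{A}$ we seek cannot be well-ordered by $\subseteq$: it must be densely ordered, and the natural way to manufacture such chains is by Dedekind cuts. Concretely, I would fix a linear order $(L,<)$ together with a subset $D \subseteq L$ that is dense in $L$ (between any two points of $L$ there lies a point of $D$) with $\card{D} \le \cardone$, and assign to each $x \in L$ the cut $A_x = \setof{d \in D}{d < x}$. Because $D$ is dense, $x \mapsto A_x$ is injective and strictly order preserving, so $\setof{A_x}{x \in L}$ is a chain in $(\powerset{D},\subseteq) \subseteq (\powerset{\cardone},\subseteq)$ of cardinality $\card{L}$. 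Everything therefore reduces to the single inequality that there is a linear order $L$ of cardinality $> \cardone$ admitting a dense subset of cardinality $\le \cardone$.

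This inequality is the heart of the matter, and I expect it to be the main obstacle. The model case $\cardone = \aleph_0$ is the classical one: the rationals are dense in the reals, giving a chain of size $2^{\aleph_0} > \aleph_0$ in $\powerset{\aleph_0}$ via the Dedekind cuts of $\mathbb{Q}$. For a general \cardone the difficulty is that the naive analogues (e.g.\ the lexicographic order on ${}^{<\cardone}2$, or eventually-constant sequences) need not have a dense subset of size $\le \cardone$, since $2^{<\cardone}$ may exceed \cardone. I would instead build $L$ as the set of branches of a tree $T$ having at most \cardone nodes but strictly more than \cardone branches, ordered lexicographically, with the nodes of $T$ (suitably realized as branch points) furnishing the dense subset $D$; producing such a tree --- equivalently, verifying the standard fact that the Dedekind number of \cardone strictly exceeds \cardone --- is where the genuine combinatorial and cardinal-arithmetic work lies. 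I would treat the regular and singular cases separately, using a cofinal sequence $\sequenceof{\mu_i}{i < \cofinality{\cardone}}$ of cardinals below \cardone in the singular case to keep the node count at \cardone while still forcing more than \cardone branches.

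Finally, the maximality clause is immediate. Having obtained a chain $\chain{C}_0 = \setof{\diag{A}}{A \in \mathcal{A}}$ in \thelattice{\cardone} with $\card{\chain{C}_0} > \cardone$, the Maximal Chain Theorem yields a maximal chain $\chain{C} \supseteq \chain{C}_0$. Since enlarging a chain cannot decrease its cardinality, $\card{\chain{C}} \ge \card{\chain{C}_0} > \cardone$, so \chain{C} is a maximal chain of the required length.
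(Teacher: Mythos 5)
Your overall architecture is exactly the paper's: reduce to exhibiting a chain of cardinality $>\cardone$ in $(\powerset{\cardone},\subseteq)$, discard the at most $\cardone$-many members of size $\le 1$, push the result through \diag{\cdot} using Remark~\ref{rem:diagonal-bijection}, and finish with the Maximal Chain Theorem; all of that is correct and matches the paper step for step. The one divergence is at what you rightly call the heart of the matter: the paper simply cites Sierpi\'nski's theorem that $\powerset{\cardone}$ contains a chain of cardinality $>\cardone$, whereas you set out to re-derive it and leave the derivation as a sketch. Your sketch is in fact the standard proof of that theorem, and it does go through: take $\cardtwo$ least with $2^{\cardtwo} > \cardone$; then $2^{<\cardtwo} = \sum_{\ordone<\cardtwo} 2^{\card{\ordone}} \le \cardtwo\cdot\cardone = \cardone$, so the binary tree $\{0,1\}^{<\cardtwo}$ has at most \cardone nodes but $2^{\cardtwo}>\cardone$ branches, and the lexicographic cuts of the branches through the node set give the desired chain. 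Note that this choice of $\cardtwo$ handles all cases uniformly, so the regular/singular case split you anticipate is unnecessary. Two small caveats: as written, the existence of the long chain of subsets is asserted but not proved, so the argument is incomplete until that tree construction is actually carried out (or a citation supplied); and your motivational claim that a non-well-ordered chain ``must be densely ordered'' is not literally true (failure of well-ordering only gives an infinite descending sequence), though nothing in the proof depends on it.
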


\begin{proof}
  By a result of Sierpi{\'n}ski \cite{Sierpinski:subsets} (see also
  \cite[Thm.\ 4.7.35]{Harzheim:orderedsets}), there is a chain
  $\chain{D}'$ in the poset $(\powerset{\cardone},\subseteq)$ of
  cardinality $\cardtwo > \cardone$. There is at most one singleton
  element $\{\ordone\}\in\chain{D}'$, and as $\cardtwo$ is infinite,
  $\chain{D} = \chain{D}'\setsub\{\emptyset,\{\ordone\}\}$ is still a
  chain of cardinality $\cardtwo$.

  Then, by Remark~\ref{rem:diagonal-bijection}, the set $\chain{C} =
  \setof{\diag{S}}{S \in \chain{D}}$ is a chain in
  $\thelattice{\cardone}$ of cardinality $\cardtwo>\cardone$.  By the
  Maximal Chain Theorem, any chain in a poset is contained in a
  maximal chain, and the result follows.
\end{proof}

We note that Theorem \ref{thm:long-nochainreach} shows that
under GCH, there are chains whose cardinalities reach the trivial 
upper bound, $2^\cardone$. 
For the special case of $\cardone = \aleph_0$, we may obtain existence
of a maximal chain in $\thelattice{\aleph_0}$ of cardinality
$2^{\aleph_0}$ without use of (G)CH:

\begin{lemma}
  There is a maximal chain of cardinality $2^{\aleph_0}$ in
  $\thelattice{\aleph_0}$.
\end{lemma}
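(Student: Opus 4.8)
The plan is to exploit the fact, recorded earlier in the paper, that the whole lattice \thelattice{\aleph_0} has cardinality $2^{\aleph_0}$. Consequently, once we exhibit \emph{any} chain of cardinality $2^{\aleph_0}$ in \thelattice{\aleph_0}, every maximal chain extending it must itself have cardinality exactly $2^{\aleph_0}$, since it is squeezed between $2^{\aleph_0}$ (from below) and $\card{\thelattice{\aleph_0}} = 2^{\aleph_0}$ (from above). Thus the entire problem reduces to producing a single chain of cardinality $2^{\aleph_0}$. Here, in contrast to the general Theorem~\ref{thm:long-nochainreach}, we need the \emph{sharp} value of the continuum rather than merely something $> \aleph_0$, and we want it without invoking any form of the continuum hypothesis.

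First I would build a chain of cardinality $2^{\aleph_0}$ in the poset $(\powerset{\aleph_0},\subseteq)$ directly, via the classical Dedekind-cut construction. Identifying $\aleph_0$ with the countable set $\mathbb{Q}$ of rationals, I set $A_r = \setof{q \in \mathbb{Q}}{q < r}$ for each real $r$. The family $\setof{A_r}{r \in \mathbb{R}}$ is totally ordered by inclusion; the map $r \mapsto A_r$ is injective, because between any two distinct reals there is a rational lying in exactly one of the two cuts; and each $A_r$ is infinite, as the rationals are unbounded below. Hence this is a chain of cardinality $\cardinal{\mathbb{R}} = 2^{\aleph_0}$ consisting entirely of sets of cardinality $\geq 2$.

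Next I would transport this chain into \thelattice{\aleph_0} using the diagonal embedding of Remark~\ref{rem:diagonal-bijection}. Since every $A_r$ has cardinality $\geq 2$, the remark applies verbatim, so $\chain{C} = \setof{\diag{A_r}}{r \in \mathbb{R}}$ is a chain in \thelattice{\aleph_0} of the same cardinality $2^{\aleph_0}$. Finally, by the Maximal Chain Theorem, $\chain{C}$ is contained in a maximal chain \chain{M}. As $\cardinal{\chain{M}} \leq \card{\thelattice{\aleph_0}} = 2^{\aleph_0}$ while $\chain{C} \subseteq \chain{M}$ forces $\cardinal{\chain{M}} \geq 2^{\aleph_0}$, we conclude $\cardinal{\chain{M}} = 2^{\aleph_0}$, as required.

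The only genuine obstacle is the first step: securing a chain of cardinality \emph{exactly} $2^{\aleph_0}$ in $\powerset{\aleph_0}$ without assuming (G)CH. The Sierpi\'nski result cited for Theorem~\ref{thm:long-nochainreach} guarantees only a chain strictly longer than the ground cardinal, which for $\aleph_0$ need not reach the continuum unless the continuum hypothesis is assumed; it is the special Dedekind-cut family over $\mathbb{Q}$ that pins the length at $2^{\aleph_0}$ outright. Everything after that — the cardinality-$\geq 2$ bookkeeping needed to apply \diag{\cdot}, and the squeeze argument capping \chain{M} at $2^{\aleph_0}$ — is routine.
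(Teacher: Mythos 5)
Your proposal is correct and follows essentially the same route as the paper: the left Dedekind cuts $D_r = \setof{q \in \mathbb{Q}}{q < r}$ give a chain of cardinality $2^{\aleph_0}$ in $\powerset{\mathbb{Q}}$ whose members all have cardinality $\geq 2$, which Remark~\ref{rem:diagonal-bijection} transports into $\thelattice{\aleph_0}$, and the Maximal Chain Theorem together with the upper bound $\cardinal{\thelattice{\aleph_0}} = 2^{\aleph_0}$ finishes the argument. No gaps.
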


\begin{proof}
  For each $r \in \mathbb{R}$, define the left Dedekind cut
  $D_r = \setof{q \in \mathbb{Q}}{q < r}$ and note that $r < r'$
  implies $D_r \subsetneq D_{r'}$ by density of $\mathbb{Q}$ in
  $\mathbb{R}$. Hence, the set of left Dedekind cuts is a chain in
  $\powerset{\mathbb{Q}}$ of cardinality
  $\cardinal{\mathbb{R}} = 2^{\aleph_0}$, all elements of which have
  cardinality $≥ 2$, and consequently, by
  Lemma~\ref{rem:diagonal-bijection}, the set of partitions
  $\setof{\diag{D_r}}{r \in \mathbb{R}}$ is a chain in
  $\thelattice{\aleph_0}$ of cardinality $2^{\aleph_0}$. By the
  Maximal Chain Theorem, this chain can be extended to a maximal
  chain; note that $2^{\aleph_0}$ is an upper bound on the cardinality
  of any chain in $\thelattice{\aleph_0}$.
\end{proof}

For arbitrary infinite $\cardone$ we do not know whether existence of
a maximal chain in $\thelattice{\cardone}$ of cardinality $2^\cardone$
can be proved without assuming GCH. 

\section{Short chains}
We now turn to the question of whether there are maximal chains of
cardinality strictly less than $\cardone$ in
$\thelattice{\cardone}$. It is immediate that there are no maximal
chains of finite cardinality in $\thelattice{\aleph_0}$. Indeed, each
step in a maximal chain merges exactly two blocks. Hence, starting
from $\bot$ which has infinitely many blocks, after a finite number of
steps it is impossible to reach $\top$ or any other partition that has
only finitely many blocks. For larger cardinals, there is a general
construction that proves existence of short chains. In the special
case where GCH is assumed, it proves existence of a maximal chain of
cardinality $\cardone^-$ for every successor cardinal
$\cardone$. Because this construction is notationally cumbersome, we
first explain the result for the particular case of
$\cardone = 2^{\aleph_0}$, the cardinality of the continuum, in order
to aid the reader's understanding.

\subsection{Short chains in $\thelattice{2^{\aleph_0}}$}\ \\
We build a maximal chain of size $\aleph_0$ in
$\thelattice{2^{\aleph_0}}$. The proof boils down to the fact that
there are countably many binary strings of finite length but
uncountably many binary strings of countably infinite length,
hence an infinite binary tree of depth $\omega$ has uncountably
many leaves but only countably many inner nodes.

The construction proceeds in two steps:
\begin{enumerate}
\item Starting from $\top$, inductively construct a countable chain of
  ``keyframe'' partitions such that the $n^{\text{th}}$ keyframe has
  $2^n$ blocks. The greatest lower bound of this chain will be $\bot$
  with $2^{\aleph_0}$ singleton blocks.
\item Complete the chain into a maximal chain by adding ``inbetween''
  partitions between the keyframes. We will need $2^n-1$ extra
  partitions between the $(n-1)^{\text{st}}$ and the $n^{\text{th}}$
  keyframe.\footnotemark
\end{enumerate}
\footnotetext{``Keyframe'' and ``Inbetween'' are terms from animation:
  A ``keyframe'' is a drawing that defines the start or end of a
  movement. The animation frames between keyframes--the
  ``inbetweens''--are drawn to make the transitions between keyframes
  smooth.}

As a finite number of inbetween partitions are added for each $n$,
only a countable number of partitions are added in total, and so the
full chain has countable length.

\textbf{Keyframes.} Consider partitions of the set $\{0, 1\}^{\omega}$
of countably infinite sequences of bits; note that the cardinality of
$\{0, 1\}^{\omega}$ is $2^{\aleph_0}$. Let $u \in \{0, 1\}^{\omega}$
and let $[u]_k$ be the set of elements of $\{0, 1\}^{\omega}$ with the
same first $k$ bits:
$[u]_k = \setof{u_0u_1\ldots u_{k-1}v}{v \in \{0, 1\}^{\omega}}$.
Define the $k^{\text{th}}$ keyframe to be
$\keyframe{k} = \setof{[u]_k}{u \in \{0, 1\}^{\omega}}$. Similarly,
define $\keyframe{\omega} = \bot$.

Thus, the keyframes are as shown in Figure~\ref{fig:keyframes}. Each level
of the picture corresponds to one partition, with its blocks
depicted. It is instructive to note that the set of blocks of all
partitions form an infinite binary tree and that the number of nodes
in each keyframe is finite (whence the corresponding partition has
only finitely many blocks).  The blocks of $\keyframe{\omega}$
constitute the bottom most level of the tree, and hence are the leaves
of the infinite binary tree, of which there are $2^{\card{\omega}}$.
The total number of \emph{keyframe partitions} corresponds to the 
number of levels in the infinite tree, and is thus clearly
countable. The total number of internal nodes in the tree is
$\sum_{\ordone < \omega} 2^{\card{\ordone}} = \aleph_0$.

\begin{figure}
  \includegraphics[width=\linewidth]{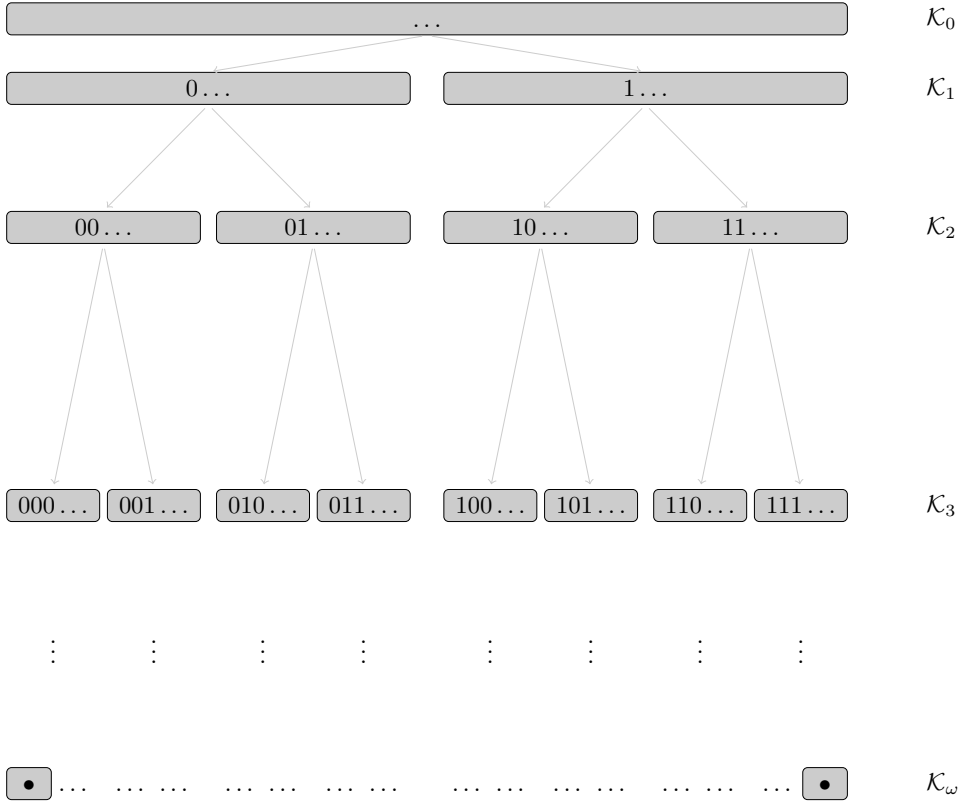}
  \caption{Keyframe partitions}
  \label{fig:keyframes}
\end{figure}

\textbf{Completing the chain.} The set of keyframe partitions clearly
form a chain, but just as clearly this chain is not maximal. Inbetween
partitions must be added between the keyframes to obtain a maximal
chain.

As seen in Figure~\ref{fig:keyframes}, between the $k^{\text{th}}$ and
the $(k+1)^{\text{st}}$ keyframe, $2^k$ blocks have been split. In
order to locally saturate the chain, it suffices to add $2^k-1$ new
partitions, each with one more of the $2^k$ blocks split in 
two. This will ensure that each partition is a successor to the
previous one.

After adding these new partitions, the picture is now as shown in
Figure~\ref{fig:fulltree}. This chain is easily seen to be maximal. At
each (non-keyframe) partition, the new blocks, resulting from
splitting one of the blocks of the parent partition, are shown with
hatching.

\begin{figure}
  \includegraphics[width=\linewidth]{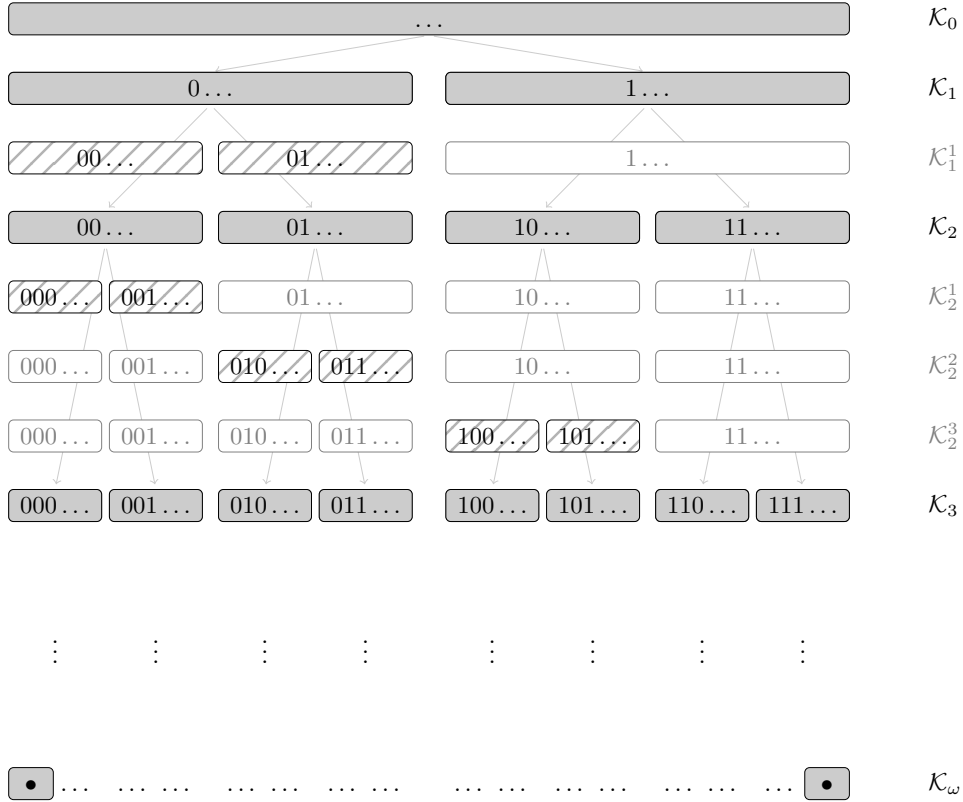}
  \caption{A short maximal chain}
  \label{fig:fulltree}
\end{figure}

Since there are $2^k$ inbetween partitions between the $k^{\text{th}}$
and $(k+1)^{\text{st}}$ keyframes, the total number of inbetween
partitions is $\sum_{k<\omega} 2^k$, which is countable. Since there
are also only countably many keyframes, the total number of partitions
in the chain is countable.

\subsection{Short chains: general construction for large cardinals}
\newcommand{\logcard}{\cardtwo}
\newcommand{\powcard}{\cardone}
\newcommand{\examplecard}{\cardthree}
\def\theord{\order{\logcard}}

Let $\logcard$ be any infinite cardinal and $\powcard =
2^{\logcard}$. 
We will build a maximal chain of cardinality strictly less than $\powcard$ 
in $\thelattice{\powcard} = \thelattice{2^{\logcard}}$.

\textbf{Underlying set.} Let $S$ be the set of binary sequences
indexed by the initial ordinal of $\logcard$. That is $S =
\{0,1\}^{\order{\logcard}}$.  $S$ contains $\powcard$ elements; we shall
use $\thelatticeweak{S}$ as a concrete instance of $\thelattice{\powcard}$.

\textbf{Keyframes.} Let $f \in S$ and $\ordthree ≤ \theord$. We denote
by $[f]_{\ordthree}$ the set of sequences that agree with $f$ on the
initial $\ordthree$ elements:
$$[f]_{\ordthree} = \setof{ g \in S}{\forall \ordtwo < \ordthree,
  f(\ordtwo) = g(\ordtwo) }
$$
Observe that $[f]_{\ordthree}$ is uniquely defined by a binary
sequence indexed by $\ordthree$.  The bracketed notation $[f]$ is used
to emphasize that $[f]$ is truly the equivalence class of $f$ in the
corresponding equivalence, even though everything is written in terms
of partitions.

Let $\keyframe{\ordthree} = \setof{[f]_{\ordthree}}{f \in S}$. Then,
$\keyframe{\ordthree}$ is a partition of $S$ which we call a
\emph{keyframe partition}. From the definition, it follows that
$\keyframe{0} = \top$ and that $\keyframe{\theord} = \bot$.  Note that
$\keyframe{\ordthree}$ consists of $2^{\card{\ordthree}}$ blocks and
is in bijective correspondence with $\{0,
1\}^{\ordthree}$. Let $\chain{K} = \setof{ \keyframe{\ordthree}}{\ordthree ≤ \theord }$.

\begin{lemma}\label{lem:ordo_huge}
  If $\ordone < \ordtwo$ then $[f]_{\ordtwo} \subset [f]_{\ordone}$ 
  and $\keyframe{\ordtwo} < \keyframe{\ordone}$.  Furthermore,
  $\chain{K}$ is a chain in $\thelattice{\powcard}$.
\end{lemma}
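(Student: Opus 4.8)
The plan is to establish the statement in three stages: first the set-level inclusion of equivalence classes, then the lift to the refinement order, and finally the chain property as an immediate corollary.

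First I would prove the inclusion $[f]_{\ordtwo} \subseteq [f]_{\ordone}$ for $\ordone < \ordtwo$: if $g$ agrees with $f$ on every coordinate below $\ordtwo$, then since $\ordone < \ordtwo$ it agrees on every coordinate below $\ordone$ as well, so $g \in [f]_{\ordone}$. To see that the inclusion is strict, I would exhibit a witness. Because $\ordone < \ordtwo \le \theord$, the index $\ordone$ is a legitimate coordinate of the sequences in $S$, so one may define $g \in S$ by flipping $f$ at position $\ordone$, that is $g(\ordone) = 1 - f(\ordone)$ and $g(\ordfour) = f(\ordfour)$ for all $\ordfour \ne \ordone$. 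Then $g$ agrees with $f$ on every coordinate $< \ordone$, so $g \in [f]_{\ordone}$, whereas $g(\ordone) \ne f(\ordone)$ together with $\ordone < \ordtwo$ gives $g \notin [f]_{\ordtwo}$. Hence $[f]_{\ordtwo} \subsetneq [f]_{\ordone}$.

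Next I would lift this to the refinement order. Every block of $\keyframe{\ordtwo}$ has the form $[f]_{\ordtwo}$ for some $f \in S$, and by the inclusion just established it is contained in the block $[f]_{\ordone}$ of $\keyframe{\ordone}$; by the definition of refinement this yields $\keyframe{\ordtwo} \le \keyframe{\ordone}$. For strictness I would invoke the fact that in any partition the block containing a given point is unique: in $\keyframe{\ordone}$ the unique block containing $f$ is $[f]_{\ordone}$, which strictly contains $[f]_{\ordtwo}$, so $[f]_{\ordtwo}$ cannot itself be a block of $\keyframe{\ordone}$. Thus $\keyframe{\ordtwo} \ne \keyframe{\ordone}$, and therefore $\keyframe{\ordtwo} < \keyframe{\ordone}$. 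Finally, the chain property drops out at once: given two distinct keyframes $\keyframe{\ordone}$ and $\keyframe{\ordtwo}$ with $\ordone, \ordtwo \le \theord$, their indices are comparable as ordinals, say $\ordone < \ordtwo$, and the foregoing gives $\keyframe{\ordtwo} < \keyframe{\ordone}$. Hence any two members of $\setof{\keyframe{\ordthree}}{\ordthree \le \theord}$ are comparable, so the family is a chain in \thelattice{\powcard}.

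I expect no serious obstacle in this lemma. The only two points requiring genuine care are verifying that the coordinate $\ordone$ is a valid index when constructing the witness for strict inclusion — which is exactly what the hypothesis $\ordone < \ordtwo \le \theord$ guarantees — and the passage from strict set inclusion of the nested classes $[f]_{\ordtwo} \subsetneq [f]_{\ordone}$ to strict inequality of the partitions, which rests on the uniqueness of the block containing a fixed element.
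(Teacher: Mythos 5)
Your proof is correct and takes essentially the same route as the paper's, which simply declares the first claim ``immediate by definition of the refinement ordering'' and derives the rest from it. You merely make explicit the details the paper leaves implicit, namely the witness for strictness obtained by flipping $f$ at coordinate $\ordone$ (legitimate since $\ordone < \ordtwo \le \theord$) and the uniqueness-of-blocks argument giving $\keyframe{\ordtwo} \neq \keyframe{\ordone}$.
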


\begin{proof}
  The first point is immediate by definition of the refinement
  ordering: if $f$ and $g$ agree on their initial $\ordtwo$ elements,
  then they trivially agree on their initial $\ordone$ elements, so
  $[f]_{\ordtwo} \subsetneq[f]_{\ordone}$.  Since this holds for every
  $f\in S$, we further get $\keyframe{\ordtwo} < \keyframe{\ordone}$.
  The second point is an immediate consequence of the first.
\end{proof}

\begin{lemma}\label{lem:keyframes-closed}
  $\chain{K}$ is closed.
\end{lemma}

\begin{proof}
  Let $D$ be a set of ordinals and
  $\chain{D} = \setof{\keyframe{\ordthree}}{\ordthree \in D}$.  Being
  a set of ordinals, $D$ admits a minimal element $\ordone$. Then
  $\keyframe{\ordone}$ is an upper bound for $\chain{D}$ and being
  part of it, must be the least upper bound.

  Let $\ordtwo$ be the smallest ordinal not in $D$. If
  $\ordtwo = \ordtwo'+1$ is a successor ordinal, then
  $\keyframe{\ordtwo'}$ is both part of $\chain{D}$ and a lower bound,
  hence it's the greatest lower bound.

  Suppose that $\ordtwo$ is a limit ordinal and that
  $\glb \chain{D} > \keyframe{\ordtwo}$. Then, there exists $f, g$
  such that $\eqinpart{f}{\glb \chain{D}}{g}$ but
  $\noteqinpart{f}{\keyframe{\ordtwo}}{g}$. By definition of
  $\keyframe{\ordtwo}$, there exists $\ordfour < \ordtwo$ with
  $f(\ordfour) \neq g(\ordfour)$. However, by definition of $\ordtwo$,
  there must exists $\ordfour' \in D$ with
  $\ordfour ≤ \ordfour' < \ordtwo$. $f(\ordfour) \neq g(\ordfour)$
  thus implies $\noteqinpart{f}{\keyframe{\ordfour'}}{g}$ which
  contradicts $\eqinpart{f}{\glb \chain{D}}{g}$. Therefore,
  $\chain{K}$ is closed.
\end{proof}

\newcommand{\ordtwotod}{\order{2^{\card{\ordthree}}}}

\textbf{Locally saturating the chain.} We fix $\ordthree < \theord$
and construct a saturated chain bounded below by
$\keyframe{\ordthree+1}$ and above by $\keyframe{\ordthree}$.

$\keyframe{\ordthree}$ has cardinality $2^{\card{\ordthree}}$.  Fix,
by the Axiom of Choice, a well-ordering $<_{\ordthree}$ of
$\keyframe{\ordthree}$ with order type $\ordtwotod$. For
$0 ≤ \ordone < \ordtwotod$, let the block $S_{\ordthree, \ordone}$ be
the $\ordone\supth$ element of $\keyframe{\ordthree}$ according to
this ordering.
% JYM:
% "of order type bla" means everything strictly smaller than bla. eg,
% of order type omega means the natural numbers. Thus we need to add
% the formal element with the limit ordinal to make everything work
% smoothly.
%
% with finite ordinals, we have, say S_2 with 4 blocks (check the
% figure of the tree) numbered 0, 1, 2, 3 which is of order type
% 4. But to go from S_2 to S_3, we need S_2^0 = S_2, S_2^1 (one block
% split), S_2^2, S_2^3 and S_2^4 = S_3 (four blocks splits).

For each $0 ≤ \ordone < \ordtwotod$, define the set
$[f]_{\ordthree}^{\ordone}$ as
$$[f]_{\ordthree}^{\ordone} = \left\{
  \begin{array}{ll}
    [f]_{\ordthree + 1} & \text{if } [f]_{\ordthree}
    <_{\ordthree} S_{\ordthree, \ordone}\\
% Next line voluntarily left empty to avoid "\\[f]" interpertation.

    [f]_{\ordthree} & \text{if } S_{\ordthree, \ordone}
    ≤_{\ordthree} [f]_{\ordthree}
  \end{array}
\right.
$$
and let $\intermediate{\ordone} = \setof{ [f]_{\ordthree}^{\ordone}}{f
  \in S}$.  Observe that the sets $[f]_{\ordthree}^{\ordone}$ are
pairwise disjoint and that their union is $S$. Thus,
$\intermediate{\ordone}$ is a partition of $S$ (an ``inbetween''
partition), obtained by splitting the initial $\ordone$ blocks of
$\keyframe{\ordthree}$.
By construction there are $2^{\card{\ordthree}}$ inbetween partitions
between $\keyframe{\ordthree}$ and $\keyframe{\ordthree+1}$.

\begin{lemma}\label{lem:initial_intermediate}
  For all $f \in S$, $[f]_{\ordthree}^0 = [f]_{\ordthree}$. Thus
  $\intermediate{0} = \keyframe{\ordthree}$.
\end{lemma}

\begin{proof}
  By construction, $S_{\ordthree, 0}$ is minimal for $<_{\ordthree}$,
  thus it is impossible for $[f]_{\ordthree}$ to be strictly smaller.
\end{proof}

\begin{lemma}\label{lem:intermediate_split}
  Let $\ordone < \ordtwo$. Then for each $f$,
  $[f]_{\ordthree}^{\ordtwo} \subseteq [f]_{\ordthree}^{\ordone}$.
\end{lemma}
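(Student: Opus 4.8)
The plan is to reduce the inclusion to a straightforward case analysis driven by where the block $[f]_{\ordthree}$ sits relative to the two pivot blocks $S_{\ordthree,\ordone}$ and $S_{\ordthree,\ordtwo}$ in the well-ordering $<_{\ordthree}$. The two ingredients I would isolate first are: (1) the monotonicity of the enumeration, namely that $\ordone < \ordtwo$ forces $S_{\ordthree,\ordone} <_{\ordthree} S_{\ordthree,\ordtwo}$, which is immediate since $S_{\ordthree,\cdot}$ is by definition the increasing enumeration of $\keyframe{\ordthree}$ in the order type \ordtwotod; and (2) the refinement inclusion $[f]_{\ordthree+1} \subseteq [f]_{\ordthree}$, which is exactly Lemma~\ref{lem:ordo_huge} applied to $\ordthree < \ordthree+1$. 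With these in hand the definition of $[f]_{\ordthree}^{\ordone}$ shows that increasing the index can only ever \emph{refine} the assigned block, never coarsen it, which is precisely the content of the claim.

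Concretely, for a fixed $f$ I would split on the position of $[f]_{\ordthree}$ into three cases. First, if $[f]_{\ordthree} <_{\ordthree} S_{\ordthree,\ordone}$, then by monotonicity also $[f]_{\ordthree} <_{\ordthree} S_{\ordthree,\ordtwo}$, so both $[f]_{\ordthree}^{\ordone}$ and $[f]_{\ordthree}^{\ordtwo}$ equal $[f]_{\ordthree+1}$ and the inclusion is an equality. Third (symmetrically), if $S_{\ordthree,\ordtwo} \leq_{\ordthree} [f]_{\ordthree}$, then also $S_{\ordthree,\ordone} \leq_{\ordthree} [f]_{\ordthree}$, so both values equal $[f]_{\ordthree}$ and again the inclusion is an equality. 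The only genuinely non-trivial case is the intermediate one, $S_{\ordthree,\ordone} \leq_{\ordthree} [f]_{\ordthree} <_{\ordthree} S_{\ordthree,\ordtwo}$: here $[f]_{\ordthree}^{\ordone} = [f]_{\ordthree}$ while $[f]_{\ordthree}^{\ordtwo} = [f]_{\ordthree+1}$, and the desired $[f]_{\ordthree}^{\ordtwo} \subseteq [f]_{\ordthree}^{\ordone}$ is exactly ingredient (2).

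I do not anticipate a real obstacle; the entire argument is bookkeeping once monotonicity of the enumeration is recorded. The one point demanding a little care is ensuring the trichotomy on $[f]_{\ordthree}$ is exhaustive and that the boundary inequalities (strict versus non-strict) match the cases in the definition of $[f]_{\ordthree}^{\ordone}$ exactly, so that no block is double-counted or missed at the pivots. I would therefore state the three cases using the same $<_{\ordthree}$ and $\leq_{\ordthree}$ comparisons that appear in the definition, which makes the case boundaries align by construction.
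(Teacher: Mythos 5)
Your proposal is correct and follows essentially the same route as the paper: the same trichotomy on the position of $[f]_{\ordthree}$ relative to $S_{\ordthree,\ordone}$ and $S_{\ordthree,\ordtwo}$ under $<_{\ordthree}$, with the only non-trivial case being the intermediate one, resolved by $[f]_{\ordthree+1}\subsetneq[f]_{\ordthree}$ from Lemma~\ref{lem:ordo_huge}. Your attention to matching the strict/non-strict boundary comparisons to the definition is exactly the right care to take, and nothing is missing.
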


\begin{proof} By cases depending on $<_{\ordthree}$. First note that
  $\ordone < \ordtwo$ implies $S_{\ordthree, \ordone} <_{\ordthree}
  S_{\ordthree, \ordtwo}$ by construction.
  \begin{enumerate}
  \item If $[f]_{\ordthree} <_{\ordthree} S_{\ordthree, \ordone}
    <_{\ordthree} S_{\ordthree, \ordtwo}$ then
    $[f]_{\ordthree}^{\ordtwo} = [f]_{\ordthree+1} =
    [f]_{\ordthree}^{\ordone}$.
  \item If $S_{\ordthree, \ordone} ≤_{\ordthree} [f]_{\ordthree}
    <_{\ordthree} S_{\ordthree, \ordtwo}$ then
    $[f]_{\ordthree}^{\ordtwo} = [f]_{\ordthree+1} \subsetneq
    [f]_{\ordthree} = [f]_{\ordthree}^{\ordone}$.
  \item If $ S_{\ordthree, \ordone} <_{\ordthree} S_{\ordthree, \ordtwo}
    ≤_{\ordthree} [f]_{\ordthree}$ then $[f]_{\ordthree}^{\ordtwo} =
    [f]_{\ordthree} = [f]_{\ordthree}^{\ordone}$.
  \end{enumerate}\ \\[-2em]
\end{proof}

\begin{lemma}\label{lem:ordo_large}
  Let $\ordone < \ordtwo$. Then $\intermediate{\ordtwo} <
  \intermediate{\ordone}$.
\end{lemma}

\begin{proof}
  By Lemma \ref{lem:intermediate_split}, each block of
  $\intermediate{\ordtwo}$ is contained in a block of
  $\intermediate{\ordone}$, thus $\intermediate{\ordtwo} ≤
  \intermediate{\ordone}$.

  Since $\ordone < \ordtwo$, there exists $f$ such that $S_{\ordthree,
    \ordone} ≤_{\ordthree} [f]_{\ordthree} <_{\ordthree}
  S_{\ordthree, \ordtwo}$. Hence there are blocks of
  $\intermediate{\ordtwo}$ that are strictly contained in blocks of
  $\intermediate{\ordone}$, and thus the inequality is indeed strict.
\end{proof}

Let $\chain{C}_{\ordthree}' = \setof{\intermediate{\ordone}}{0 ≤ \ordone < \ordtwotod}$
  and
  $\chain{C}_{\ordthree} = \chain{C}_{\ordthree}' \cup
  \{\keyframe{\ordthree+1}\}$.

\begin{corollary}
  \label{cor:chain-chunk}
  $\chain{C}_{\ordthree}$ is a chain with minimum
  $\keyframe{\ordthree+1}$ and maximum
  $\keyframe{\ordthree} = \intermediate{0}$.
\end{corollary}

Note that including $\keyframe{\ordthree+1}$ in
$\chain{C}_{\ordthree}'$ (that is, defining only
$\chain{C}_{\ordthree}$) appears to be more natural, but because it is
both an element of $\chain{C}_{\ordthree}$ and of
$\chain{C}_{\ordthree+1}$, it becomes notationally awkward when taking
unions of chains and computing the cardinality of such unions.

\begin{lemma}\label{lem:inbetweens-closed}
  $\chain{C}_{\ordthree}$ is closed.
\end{lemma}

\begin{proof}
  Let $D$ be a set of ordinals and
  $\chain{D} = \setof{\intermediate{\ordone}}{\ordone \in D}$.  Being
  a set of ordinals $D$ admits a minimal element $\ordone'$.  Then
  $\intermediate{\ordone'}$ is an upper bound for $\chain{D}$ and
  being part of it, must be the least upper bound.

  Let $\ordtwo$ be the smallest ordinal not in $D$. If
  $\ordtwo = \ordtwo'+1$ is a successor ordinal, then
  $\intermediate{\ordtwo'}$ is both part of $\chain{D}$ and a lower
  bound, hence it's the greatest lower bound.

  Suppose that $\ordtwo$ is a limit ordinal and that
  $\keyframe{\ordthree+1} ≤ \intermediate{\ordtwo} < \glb \chain{D} ≤
  \keyframe{\ordthree}$.
  Then, there exists $f, g$ such that
  $\noteqinpart{f}{\intermediate{\ordtwo}}{g}$ but
  $\eqinpart{f}{\glb \chain{D}}{g}$ hence
  $\eqinpart{f}{\keyframe{\ordthree}}{g}$. The later implies
  $[f]_{\ordthree} = [g]_{\ordthree}$, therefore we must have
  $[f]_{\ordthree} <_{\ordthree} S_{\ordthree, \ordtwo}$ in order to
  have
  $[f]_{\ordthree}^{\ordtwo} = [f]_{\ordthree+1} \neq
  [g]_{\ordthree+1} = [g]_{\ordthree}^{\ordtwo}$.

  Now, because $\ordtwo$ is limit, there exists $\ordfour$ with
  $[f]_{\ordthree} <_{\ordthree} S_{\ordthree, \ordfour} <
  S_{\ordthree, \ordtwo}$
  and by definition of $\ordtwo$, we can choose $\ordfour \in
  D$.
  However, this implies
  $[f]_{\ordthree}^{\ordfour} = [f]_{\ordthree+1} \neq
  [g]_{\ordthree+1} = [g]_{\ordthree}^{\ordfour}$
  and thus $\noteqinpart{f}{\intermediate{\ordfour}}{g}$,
  contradicting $\eqinpart{f}{\glb \chain{D}}{g}$. Therefore,
  $\chain{C}_{\ordthree}$ is closed.
\end{proof}

For any $f \in S$ let $f_0 \in [f]_{\ordthree}$ (resp. $f_1\in
[f]_{\ordthree}$) be the binary sequence such that $f_0(\ordthree') =
0$ (resp. $f_1(\ordthree') = 1$) for all $\ordthree' ≥
\ordthree$. Notice that $[f_0]_{\ordthree} = [f_1]_{\ordthree}$ but
$[f_0]_{\ordthree+1} \neq [f_1]_{\ordthree+1}$.

For any $g \in [f]_{\ordthree}$, either $g(\ordthree) = 0$ and
$[g]_{\ordthree+1} = [f_0]_{\ordthree+1}$, or $g(\ordthree) = 1$ and
$[g]_{\ordthree+1} = [f_1]_{\ordthree+1}$. Thus, $[f]_{\ordthree} =
[f_0]_{\ordthree+1} \cup [f_1]_{\ordthree+1}$.

\begin{lemma}
  \label{lem:keyframe-order}
  For every $\ordone < \ordtwotod$, $\intermediate{\ordone+1} \prec
  \intermediate{\ordone}$.
\end{lemma}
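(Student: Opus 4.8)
The statement asserts that consecutive inbetween partitions $\intermediate{\ordone+1}$ and $\intermediate{\ordone}$ are in the covering relation $\prec$, meaning $\intermediate{\ordone+1}$ is obtained from $\intermediate{\ordone}$ by merging exactly two distinct blocks (equivalently, by the characterization stated just after defining $\prec$). By Lemma~\ref{lem:ordo_large} I already have the strict inequality $\intermediate{\ordone+1} < \intermediate{\ordone}$, so the remaining task is purely local: to show the difference between the two partitions affects exactly one block of $\keyframe{\ordthree}$, namely $S_{\ordthree,\ordone}$, splitting it into two halves.

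The plan is to compare $[f]_{\ordthree}^{\ordone}$ and $[f]_{\ordthree}^{\ordone+1}$ block-by-block using their defining case split, keyed on where $[f]_{\ordthree}$ sits relative to the threshold blocks. First I would observe that the only change when passing from index $\ordone$ to $\ordone+1$ occurs for those $f$ whose class $[f]_{\ordthree}$ equals the block $S_{\ordthree,\ordone}$: for such $f$ we have $[f]_{\ordthree} <_{\ordthree} S_{\ordthree,\ordone+1}$ but \emph{not} $[f]_{\ordthree} <_{\ordthree} S_{\ordthree,\ordone}$, so the definition flips from $[f]_{\ordthree}$ (case ``$S_{\ordthree,\ordone} \leq_{\ordthree} [f]_{\ordthree}$'') to $[f]_{\ordthree+1}$ (case ``$[f]_{\ordthree} <_{\ordthree} S_{\ordthree,\ordone+1}$''). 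For every other $f$ — i.e. those with $[f]_{\ordthree} <_{\ordthree} S_{\ordthree,\ordone}$ or $S_{\ordthree,\ordone+1} \leq_{\ordthree} [f]_{\ordthree}$ — both definitions agree, so the blocks are unchanged. This isolates the change to the single block $S_{\ordthree,\ordone}$.

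Next I would identify what that change is. For the $f$ with $[f]_{\ordthree} = S_{\ordthree,\ordone}$, in $\intermediate{\ordone}$ the block is the whole class $[f]_{\ordthree}$, whereas in $\intermediate{\ordone+1}$ it is replaced by the finer block $[f]_{\ordthree+1}$. Using the decomposition established just before the lemma, $[f]_{\ordthree} = [f_0]_{\ordthree+1} \cup [f_1]_{\ordthree+1}$, so the single block $S_{\ordthree,\ordone}$ of $\intermediate{\ordone}$ is precisely the union of the two distinct blocks $[f_0]_{\ordthree+1}$ and $[f_1]_{\ordthree+1}$ of $\intermediate{\ordone+1}$. Conversely, merging these two blocks of $\intermediate{\ordone+1}$ recovers $\intermediate{\ordone}$. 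By the stated characterization of the covering relation ($\parti{P}\prec\parti{Q}$ iff $\parti{Q}$ is obtained by merging exactly two distinct blocks of $\parti{P}$), this gives $\intermediate{\ordone+1}\prec\intermediate{\ordone}$.

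I do not expect a serious obstacle here; the argument is a careful bookkeeping of the two-line case definition of $[f]_{\ordthree}^{\ordone}$. The one point to state cleanly is that $S_{\ordthree,\ordone} <_{\ordthree} S_{\ordthree,\ordone+1}$ and that no block lies strictly between them in $<_{\ordthree}$ (as $\ordone+1$ is the immediate successor ordinal in the well-ordering), which is exactly what guarantees that only the block $S_{\ordthree,\ordone}$ is affected and that it splits into the two pieces $[f_0]_{\ordthree+1}$ and $[f_1]_{\ordthree+1}$ rather than some coarser or finer change.
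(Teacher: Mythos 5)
Your proposal is correct and follows essentially the same route as the paper: a case analysis on the position of $[f]_{\ordthree}$ relative to $S_{\ordthree,\ordone}$ and $S_{\ordthree,\ordone+1}$ under $<_{\ordthree}$, isolating the change to the unique block with $[f]_{\ordthree} = S_{\ordthree,\ordone}$, which splits into $[f_0]_{\ordthree+1}$ and $[f_1]_{\ordthree+1}$ via the decomposition $[f]_{\ordthree} = [f_0]_{\ordthree+1} \cup [f_1]_{\ordthree+1}$. Your explicit remark that $\ordone+1$ is the immediate successor in the well-ordering, so no block lies strictly between the two thresholds, is exactly the point that makes the bookkeeping work.
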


\begin{proof}
  Fix $\ordthree$ and $\ordone$. Let $f \in S$ and consider the block
  $[f]_{\ordthree}^{\ordone}$ of $\intermediate{\ordone}$. There are
  three cases:
  \begin{enumerate}
  \item If $[f]_{\ordthree}<_{\ordthree} S_{\ordthree, \ordone}
    <_{\ordthree} S_{\ordthree, \ordone+1}$ then
    $[f]_{\ordthree}^{\ordone} = [f]_{\ordthree}^{\ordone+1} =
    [f]_{\ordthree+1}$

  \item If $S_{\ordthree, \ordone} <_{\ordthree} S_{\ordthree,
        \ordone+1} ≤_{\ordthree} [f]_{\ordthree}$ then
    $[f]_{\ordthree}^{\ordone} = [f]_{\ordthree} =
    [f]_{\ordthree}^{\ordone+1}$.

  \item If
    $ [f]_{\ordthree} = S_{\ordthree, \ordone}<_{\ordthree}
    S_{\ordthree, \ordone+1}$
    then either $[f]^{\ordone+1}_{\ordthree} = [f_0]_{\ordthree+1}$ or
    $[f]^{\ordone+1}_{\ordthree} = [f_1]_{\ordthree+1}$.
  \end{enumerate}

  (1) and (2) follow immediately from the definition. For case (3), we
  first obtain $[f]^{\ordone}_{\ordthree} = [f]_\ordthree$ and
  $[f]^{\ordone+1}_\ordthree = [f]_{\ordthree+1}$ by definition. But
  since
  $[f]_\ordthree = [f_0]_{\ordthree+1} \cup [f_1]_{\ordthree+1}$, we
  must have either $f\in [f_0]_{\ordthree+1}$ or
  $f\in [f_1]_{\ordthree+1}$.  Hence
  $[f]^{\ordone+1}_\ordthree = [f]_{\ordthree+1}$ is either
  $[f_0]_{\ordthree+1}$ or $[f_1]_{\ordthree+1}$, as desired. By the
  above, every block of $\intermediate{\ordone}$ is a block of
  $\intermediate{\ordone+1}$ except for a single block (the unique
  block for which $[f]_{\ordthree} = S_{\ordthree, \ordone}$), which
  is obtained by taking the union of exactly two blocks of
  $\intermediate{\ordone+1}$.  Hence,
  $\intermediate{\ordone+1} \prec \intermediate{\ordone}$.
\end{proof}

\begin{lemma}\label{lem:inbetween-covering}
  $\chain{C}_{\ordthree}$ is covering.
\end{lemma}

\begin{proof}
  Let $\parti{P} \prec_{\chain{C}_{\ordthree}} \parti{Q}$ be two
  partitions of $\chain{C}_{\ordthree}$. By definition, we must have
  $\parti{Q} = \intermediate{\ordone}$ for some $\ordone$, therefore
  $\parti{P} = \intermediate{\ordone+1}$. By the previous lemma, we
  know that $\intermediate{\ordone+1} \prec \intermediate{\ordone}$,
  hence $\chain{C}_{\ordthree}$ is covering.
\end{proof}

\begin{proposition}\label{prop:maximal_intermediate}
  $\chain{C}_{\ordthree}$ is a saturated chain in
  $\thelattice{\powcard}$, with minimum $\keyframe{\ordthree+1}$ and maximum
  $\keyframe{\ordthree}$.
\end{proposition}

\begin{proof}
  Applying previous results, we find that $\chain{C}_{\ordthree}$ is
  an endpoint-including (Corollary~\ref{cor:chain-chunk}), closed
  (Lemma~\ref{lem:inbetweens-closed}) and covering
  (Lemma~\ref{lem:inbetween-covering}) chain. Therefore, by
  Lemma~\ref{lem:closed-covering-saturated}, it is saturated.
\end{proof}

\textbf{Completing the chain.} In the following, let $\chain{C} =
\{\bot\} \cup \left( \bigcup_{0 ≤ \ordthree < \theord}
  \chain{C}_{\ordthree}' \right)$.

\begin{theorem}\label{the:maximal_chain_C}
  $\chain{C}$ is a maximal chain in $\thelattice{\powcard}$.
\end{theorem}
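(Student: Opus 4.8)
The plan is to check two things about \chain{C}: that it is a chain at all, and that it is maximal. The first is routine gluing; the genuine content is maximality, which I would reduce to a dichotomy according to where a hypothetical insertable partition sits relative to the keyframes, with the limit case being the crux.

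First I would record that \chain{C} is a chain. Each chunk $\chain{C}_{\ordthree}'$ is a chain by Corollary~\ref{cor:chain-chunk}, and every member $\parti{Q}$ of it satisfies $\keyframe{\ordthree+1} \le \parti{Q} \le \keyframe{\ordthree}$, since $\parti{Q}$ arises from $\keyframe{\ordthree}$ by splitting some of its blocks into blocks of $\keyframe{\ordthree+1}$. Using that the keyframes are $\le$-decreasing in their index (Lemma~\ref{lem:ordo_huge}), for $\ordthree < \ordtwo$ every element of $\chain{C}_{\ordthree}'$ lies above $\keyframe{\ordthree+1}\ge\keyframe{\ordtwo}$ and hence above every element of $\chain{C}_{\ordtwo}'$. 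Thus members of distinct chunks are comparable, members of the same chunk are comparable, and $\bot$ lies below all of them; so \chain{C} is a chain.

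For maximality I would argue by contradiction: suppose $\parti{P}\notin\chain{C}$ and $\chain{C}\cup\{\parti{P}\}$ is still a chain. Then $\parti{P}$ is comparable to every keyframe $\keyframe{\ordthree}$ with $\ordthree\le\theord$ --- each such keyframe lies in \chain{C}, being $\bot$ for $\ordthree=\theord$ and the maximum of $\chain{C}_{\ordthree}'$ otherwise --- and since $\parti{P}$ equals none of them it is strictly comparable to each. As the keyframes decrease in their index, $\setof{\ordthree\le\theord}{\keyframe{\ordthree}<\parti{P}}$ is a nonempty final segment (it contains $\theord$, as $\parti{P}>\bot$) whose complement is a nonempty initial segment (it contains $0$, as $\parti{P}<\top$). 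Let $\ordthree$ be its least element; then $0<\ordthree\le\theord$ and $\keyframe{\ordtwo}>\parti{P}$ for all $\ordtwo<\ordthree$. If $\ordthree=\ordtwo+1$ is a successor, then $\keyframe{\ordtwo+1}<\parti{P}<\keyframe{\ordtwo}$, so $\parti{P}$ lies strictly between the minimum and maximum of the chain $\chain{C}_{\ordtwo}\subseteq\chain{C}$; since $\parti{P}$ is comparable to every element of $\chain{C}_{\ordtwo}$ and belongs to none of it, this contradicts the saturation of $\chain{C}_{\ordtwo}$ proved in Proposition~\ref{prop:maximal_intermediate}. If $\ordthree$ is a limit, then $\parti{P}<\keyframe{\ordtwo}$ for all $\ordtwo<\ordthree$, so $\parti{P}$ is a lower bound of $\setof{\keyframe{\ordtwo}}{\ordtwo<\ordthree}$; but $[f]_{\ordthree}=\bigcap_{\ordtwo<\ordthree}[f]_{\ordtwo}$ for limit $\ordthree$, so $\keyframe{\ordthree}$ is the greatest lower bound $\bigwedge_{\ordtwo<\ordthree}\keyframe{\ordtwo}$, whence $\parti{P}\le\keyframe{\ordthree}$, contradicting $\keyframe{\ordthree}<\parti{P}$.

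The hard part is the limit case: the observation I would need is that at a limit stage the keyframe is exactly the meet of all earlier keyframes (since the defining classes are nested intersections), so no partition can be squeezed strictly below every earlier keyframe yet strictly above $\keyframe{\ordthree}$; note this also disposes of $\ordthree=\theord$, as $\theord$ is itself a limit ordinal. The successor case reduces immediately to the already-established local saturation of a single chunk, and the chain property is bookkeeping with the inclusions $\keyframe{\ordthree+1}\le\intermediate{\ordone}\le\keyframe{\ordthree}$.
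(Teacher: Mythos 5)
Your proof is correct and follows essentially the same route as the paper's: establish the chain property by gluing the chunks $\chain{C}_{\ordthree}'$ via Lemma~\ref{lem:ordo_huge}, then for maximality locate the least keyframe index $\ordthree$ with $\keyframe{\ordthree}<\parti{P}$ and split into the successor case (handled by the local saturation of Proposition~\ref{prop:maximal_intermediate}) and the limit case. The only difference is cosmetic: in the limit case you invoke $\keyframe{\ordthree}=\bigwedge_{\ordtwo<\ordthree}\keyframe{\ordtwo}$ (a correct and slightly cleaner packaging), whereas the paper unfolds the same fact by exhibiting two sequences $f,g$ in one block of $\parti{P}$ that are separated by some $\keyframe{\ordone+1}$ with $\ordone+1<\ordthree$.
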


\begin{proof}
  By Proposition \ref{prop:maximal_intermediate},
  $\chain{C}_{\ordthree} = \chain{C}_{\ordthree}' \cup
  \{\keyframe{\ordthree+1}\}$ is a saturated chain with minimum
  $\keyframe{\ordthree+1}$ and maximum $\keyframe{\ordthree}$, whereby
  each $\chain{C}_{\ordthree}'$ is a chain. Noting that $\ordthree <
  \ordfour$ implies $\keyframe{\ordfour} < \keyframe{\ordthree}$ by
  Lemma \ref{lem:ordo_huge}, every element of $\chain{C}_{\ordthree}'$
  is comparable to every element of $\chain{C}_{\ordfour}'$. Hence,
  $\chain{C}$ is a chain.

    Suppose that there exists a partition
    $\parti{P} \notin \chain{C}$ such that
    $\chain{C} \cup \{\parti{P}\}$ is still a chain. We obviously have
    $\parti{P} \notin \chain{K} \subset \chain{C}$ and because
    $\chain{K}$ is closed (Lemma~\ref{lem:keyframes-closed}) we can
    apply Lemma~\ref{lem:upperlower} and conclude that
    $\lub \lowerchain{K}{\parti{P}} \prec_{\chain{K}} \glb
    \upperchain{K}{\parti{P}}$.
    Because they are both keyframes, we must have
    $\glb \upperchain{K}{\parti{P}} = \keyframe{\ordtwo}$ for some
    ordinal, hence
    $\lub \lowerchain{K}{\parti{P}} = \keyframe{\ordtwo+1}$. This
    entails $\keyframe{\ordtwo+1} < \parti{P} < \keyframe{\ordtwo}$,
    but we now by Proposition~\ref{prop:maximal_intermediate} that the
    chain is saturated between these two. Hence, $\chain{C}$ is
    maximal.
\end{proof}

\textbf{Length of the chain.} Recall that $\chain{C}_{\ordthree}'$ has
cardinality $2^{\card{\ordthree}}$.

\begin{lemma}\label{lem:chain-length}
  $\chain{C}$ has cardinality $\sum_{\ordthree < \theord}
  2^{\card{\ordthree}}$.
\end{lemma}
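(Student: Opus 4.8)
The plan is to realise $\chain{C}$ as a \emph{disjoint} union and then add up the cardinalities of the pieces. Recall that each chunk $\chain{C}_{\ordthree}'$ has cardinality $2^{\card{\ordthree}}$, so everything hinges on showing that the family $\setof{\chain{C}_{\ordthree}'}{\ordthree < \theord}$, together with the extra singleton $\{\bot\}$, is pairwise disjoint; once that is in hand, the cardinality of the union is simply the sum of the cardinalities of its members.

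The key observation driving disjointness is that each chunk occupies a distinct half-open interval of the ambient chain. By Corollary~\ref{cor:chain-chunk}, every $\parti{P} \in \chain{C}_{\ordthree}'$ satisfies $\keyframe{\ordthree+1} < \parti{P} \le \keyframe{\ordthree}$, the strict lower inequality holding because $\keyframe{\ordthree+1}$ is the minimum of $\chain{C}_{\ordthree}$ but was deliberately excluded from $\chain{C}_{\ordthree}'$. First I would fix $\ordthree < \ordfour < \theord$ and show the corresponding intervals are disjoint: since $\ordthree + 1 \le \ordfour$, Lemma~\ref{lem:ordo_huge} gives $\keyframe{\ordfour} \le \keyframe{\ordthree+1}$, so any $\parti{Q} \in \chain{C}_{\ordfour}'$ satisfies $\parti{Q} \le \keyframe{\ordfour} \le \keyframe{\ordthree+1} < \parti{P}$ for every $\parti{P} \in \chain{C}_{\ordthree}'$; in particular $\parti{P} \neq \parti{Q}$, whence $\chain{C}_{\ordthree}' \cap \chain{C}_{\ordfour}' = \emptyset$. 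Likewise, because $\theord$ is the initial ordinal of an infinite cardinal and hence a limit ordinal, we have $\ordthree + 1 < \theord$ for every $\ordthree < \theord$, so $\bot = \keyframe{\theord} < \keyframe{\ordthree+1} < \parti{P}$, showing $\bot$ lies in none of the chunks.

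With disjointness established the computation is routine:
\[
  \cardinal{\chain{C}} = 1 + \sum_{\ordthree < \theord} \cardinal{\chain{C}_{\ordthree}'} = 1 + \sum_{\ordthree < \theord} 2^{\card{\ordthree}}.
\]
Since $\theord$ is infinite, the sum ranges over $\card{\theord} = \logcard \ge \aleph_0$ nonzero terms and is therefore an infinite cardinal, which absorbs the leading $1$; this yields $\cardinal{\chain{C}} = \sum_{\ordthree < \theord} 2^{\card{\ordthree}}$, as claimed.

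The only genuinely delicate point is the strict inequality $\keyframe{\ordthree+1} < \parti{P}$ for $\parti{P} \in \chain{C}_{\ordthree}'$, i.e.\ confirming that no inbetween partition $\intermediate{\ordone}$ with $\ordone < \ordtwotod$ coincides with the fully split $\keyframe{\ordthree+1}$. This is precisely why the construction uses $\chain{C}_{\ordthree}'$ rather than $\chain{C}_{\ordthree}$, and it follows because for every $\ordone < \ordtwotod$ the block $S_{\ordthree,\ordone}$ remains unsplit in $\intermediate{\ordone}$, so $\intermediate{\ordone}$ still contains a block of $\keyframe{\ordthree}$ and is thus strictly coarser than $\keyframe{\ordthree+1}$. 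Everything else is bookkeeping with the strictly decreasing keyframes supplied by Lemma~\ref{lem:ordo_huge}.
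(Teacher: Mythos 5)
Your proof is correct and follows essentially the same route as the paper's: decompose $\chain{C}$ as the union of $\{\bot\}$ and the chunks $\chain{C}_{\ordthree}'$ and sum their cardinalities. The paper's version is just the bare chain of equalities, whereas you additionally justify the pairwise disjointness of the chunks (via the intervals $\left(\keyframe{\ordthree+1},\keyframe{\ordthree}\right]$) and the absorption of the extra $1$ for $\bot$ --- details the paper leaves implicit.
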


\begin{proof}
  $\cardinal{\chain{C}} = \cardinal{\bigcup_{\ordthree < \theord}
    \chain{C}_{\ordthree}' \cup \{\bot\}} = \cardinal{\bigcup_{
      \ordthree < \theord} \chain{C}_{\ordthree}'} = \sum_{\ordthree <
    \theord} \cardinal{\chain{C}_{\ordthree}} = \sum_{\ordthree <
    \theord} 2^{\card{\ordthree}}$
\end{proof}

\begin{lemma}\label{lem:use-Konig}
  Let $\logcard$ be a cardinal such that for any
  $\examplecard < \logcard$, we have $2^\examplecard <
  2^\logcard$.
  Then
  $\sum_{\ordthree < \order{\logcard}} 2^{\card{\ordthree}} <
  2^\logcard$.
\end{lemma}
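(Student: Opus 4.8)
The plan is to reduce the claim to König's theorem via the cofinality of $2^{\logcard}$. First I would record two elementary facts about the index set. Since $\order{\logcard}$ is the initial ordinal of $\logcard$, it has cardinality $\logcard$, so the sum has $\logcard$ terms. Moreover every $\ordthree < \order{\logcard}$ satisfies $\card{\ordthree} < \logcard$, so $\card{\ordthree}$ is a cardinal $\examplecard < \logcard$, and the hypothesis gives $2^{\card{\ordthree}} = 2^{\examplecard} < 2^{\logcard}$. Thus we are summing $\logcard$-many cardinals, each strictly below $2^{\logcard}$.

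Next I would invoke König's theorem in the form $\cofinality{2^{\logcard}} > \logcard$ recorded in the preliminaries. Its relevant consequence is that no set of fewer than $\cofinality{2^{\logcard}}$ ordinals below $2^{\logcard}$ can be cofinal in $2^{\logcard}$. Applying this to $\setof{2^{\card{\ordthree}}}{\ordthree < \order{\logcard}}$ --- a set of cardinality at most $\logcard < \cofinality{2^{\logcard}}$ all of whose members are $< 2^{\logcard}$ --- gives that its supremum $S$ satisfies $S < 2^{\logcard}$.

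Finally I would combine this with the standard bound $\sum_{\ordthree < \order{\logcard}} 2^{\card{\ordthree}} \leq \logcard \cdot S$ for sums of cardinals. Since $\logcard$ is infinite and $S \geq 1$, we have $\logcard \cdot S = \max(\logcard, S)$, and as both $\logcard$ and $S$ lie strictly below $2^{\logcard}$ so does their maximum. This yields $\sum_{\ordthree < \order{\logcard}} 2^{\card{\ordthree}} < 2^{\logcard}$, as required.

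The routine ingredients --- the cardinality of the initial ordinal, the bound $\sum_i \kappa_i \leq |I|\cdot\sup_i\kappa_i$, and $a\cdot b = \max(a,b)$ for infinite cardinals --- need no comment. The crux, and the reason the hypothesis $2^{\examplecard} < 2^{\logcard}$ is indispensable, is the passage to a \emph{strict} inequality: without it some term could equal $2^{\logcard}$ and the sum need not fall short. All the genuine content is carried by König's theorem, which forces $\cofinality{2^{\logcard}} > \logcard$ and hence prevents $\logcard$-many cardinals below $2^{\logcard}$ from accumulating to $2^{\logcard}$.
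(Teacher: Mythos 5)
Your proof is correct and follows essentially the same route as the paper's: both arguments reduce the claim to K\"onig's theorem in the form $\cofinality{2^{\logcard}} > \logcard$, observing that a sum of $\logcard$-many cardinals each strictly below $2^{\logcard}$ cannot reach $2^{\logcard}$. The only difference is presentational --- you argue directly via the supremum and the bound $\sum_i \cardone_i \leq \cardinal{I}\cdot\sup_i\cardone_i$, whereas the paper phrases the same point as a proof by contradiction.
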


\begin{proof}
If $\ordthree < \order{\logcard}$, then
$\cardinal{\ordthree} < \logcard$, thus by hypothesis,
$2^{\cardinal{\ordthree}} < 2^{\logcard}$.

Assume now, for contradiction, that
$\sum_{\ordthree < \order{\logcard}} 2^{\card{\ordthree}} ≥
2^{\logcard}$. 
Recall that
\[
\cofinality{2^\logcard} = \inf\setof{\cardinal{I}}{2^\logcard =
  \cardinal{\bigcup_{i \in I} A_i} \glb \forall i \in I,
  \cardinal{A_i} < 2^\logcard}
\]
which is the same as
$\cofinality{2^\logcard} = \inf\setof{\cardinal{I}}{2^\logcard =
  \sum_{i \in I} \examplecard_i \glb \forall i \in I, \examplecard_i <
  2^\logcard}$.

  It then follows that $\cofinality{2^{\logcard}} ≤
  \cardinal{\order{\logcard}} = \logcard$. But by a standard
  consequence of K{\"o}nig's Theorem \cite{Konig:1905} we always have
  $\cofinality{2^\logcard} > \logcard$ under the Axiom of Choice.
\end{proof}

We can now prove our main result on short chains.

\begin{theorem}\label{thm:small_chain}
  Let $\logcard$ be an infinite cardinal such that for every cardinal
  $\examplecard < \logcard$ we have $2^\examplecard < 2^\logcard$.  Then there
  exists a maximal chain of cardinality $< 2^{\logcard}$ (but $≥
  \logcard$) in $\thelattice{2^\logcard}$.
\end{theorem}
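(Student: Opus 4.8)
The plan is to assemble Theorem~\ref{thm:small_chain} directly from the machinery already developed in this subsection. The chain \chain{C} constructed in Theorem~\ref{the:maximal_chain_C} is maximal in \thelattice{2^\logcard} by that theorem, so the only task remaining is to pin down its cardinality and verify that it lies strictly between \logcard and $2^{\logcard}$. The key observation is that all the hard combinatorial work — showing each $\chain{C}_{\ordthree}$ is saturated (Proposition~\ref{prop:maximal_intermediate}) and that the union is maximal — is done, and the cardinality bookkeeping was isolated in Lemma~\ref{lem:chain-length}.

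**First I would** invoke Theorem~\ref{the:maximal_chain_C} to assert that \chain{C} is a maximal chain in \thelattice{2^\logcard}. Then, by Lemma~\ref{lem:chain-length}, its cardinality is exactly $\sum_{\ordthree < \order{\logcard}} 2^{\card{\ordthree}}$. For the upper bound, I would apply Lemma~\ref{lem:use-Konig}, whose hypothesis is precisely the hypothesis of the present theorem ($2^\examplecard < 2^\logcard$ for every $\examplecard < \logcard$): this gives $\sum_{\ordthree < \order{\logcard}} 2^{\card{\ordthree}} < 2^{\logcard}$, so \chain{C} has cardinality strictly less than $2^{\logcard}$.

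**For the lower bound**, I would note that any maximal chain in \thelattice{2^\logcard} must contain the keyframes $\keyframe{\ordthree}$ for $\ordthree \leq \theord = \order{\logcard}$ (these are in \chain{C} since $\keyframe{\ordthree} = \intermediate{0} \in \chain{C}_{\ordthree}'$ for $\ordthree < \theord$, and $\keyframe{\theord} = \bot \in \chain{C}$). By Lemma~\ref{lem:ordo_huge} the keyframes are pairwise distinct and form a chain of order type $\order{\logcard}+1$, hence of cardinality \logcard, so $\cardinal{\chain{C}} \geq \logcard$. This gives the desired $\logcard \leq \cardinal{\chain{C}} < 2^{\logcard}$.

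**The only mild subtlety** I anticipate is confirming that the hypotheses line up exactly: the theorem assumes $2^\examplecard < 2^\logcard$ for all cardinals $\examplecard < \logcard$, which is verbatim the hypothesis of Lemma~\ref{lem:use-Konig}, so no extra argument is needed there. The lower-bound estimate is the one place where I must be slightly careful — rather than trying to compute $\sum_{\ordthree < \order{\logcard}} 2^{\card{\ordthree}}$ from below term-by-term, it is cleaner to argue directly that the \logcard-many distinct keyframes already sit inside \chain{C}. Thus the whole proof is a short synthesis, with no genuine obstacle beyond citing the four preceding results in the right order.
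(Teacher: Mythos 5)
Your proposal is correct and follows essentially the same route as the paper: cite Theorem~\ref{the:maximal_chain_C} for maximality, Lemma~\ref{lem:chain-length} for the cardinality $\sum_{\ordthree < \order{\logcard}} 2^{\card{\ordthree}}$, and Lemma~\ref{lem:use-Konig} for the strict upper bound. The only (harmless) difference is that you justify the lower bound $\geq \logcard$ by counting the distinct keyframes inside $\chain{C}$, whereas the paper leaves this implicit (it also follows at once from the sum having $\logcard$ terms, each at least $1$).
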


\begin{proof}
  By Theorem \ref{the:maximal_chain_C} and
  Lemma~\ref{lem:chain-length}, there is a maximal chain in
  $\thelattice{2^\logcard}$ of cardinality 
  $\sum_{\ordthree
    < \order{\logcard}}
  2^{\card{\ordthree}}$. By Lemma \ref{lem:use-Konig} we have
  $\sum_{\ordthree
    < \order{\logcard}} 2^{\card{\ordthree}} < 2^\logcard$.

  Consequently, the chain is of cardinality $<
  2^\logcard$ (but $≥ \logcard$).
\end{proof}
For the case of $\logcard = \aleph_0$, we trivially have
$2^n < 2^{\aleph_0}$ for all $n < \aleph_0$, and thus there is a
countable maximal chain in $\thelattice{2^{\aleph_0}}$.
The condition is also satisfied for all strong limit cardinals.

\section{Antichains and maximal antichains}
An \emph{antichain} in $(\poset{P},≤)$ is a subset $\chain{A}
\subseteq \poset{P}$ in which no two distinct elements of $\poset{P}$
are $≤$-comparable. An antichain is \emph{maximal} if adding an
element to it results in a set that is not an antichain.  Observe that
in a poset, the \emph{trivial} antichains $\{\bot\}$ and $\{\top\}$
are always maximal antichains.

There is no known tight bound on the cardinality of maximal antichains
in $\thelattice{n}$ for finite $n$, but some asymptotic results are
known \cite{Canfield1998188,BlinovskyHarper:antichain}.
The cardinality of a maximal antichain in $\thelattice{n}$ is
$\Theta\left(n^{a}(\log n)^{-a-1/4} S(n,K_n)\right)$ where
$a = (2 - e\log2)/4$ and $S(n,K_n) = \max_k\stirling{n}{k}$ is the
largest Stirling number of the second kind for fixed $n$.

\begin{theorem}\label{thm:antichain_all_card}
  Let $\cardone$ be infinite. There is a maximal antichain of
  cardinality $\cardone$ and a maximal antichain of cardinality
  $2^{\cardone}$ in $\thelattice{\cardone}$.
\end{theorem}

\begin{proof}
  Atoms of $\thelattice{\cardone}$ are singular partitions with the
  non-singleton block containing only two elements and are thus in
  bijection with the set of two-element subsets of $\ordone$, hence
  there are $\cardone$ atoms.  By atomicity of
  $\thelattice{\cardone}$, atoms are mutually incomparable (they form
  an antichain) and every other non-$\bot$ element lies over an atom
  (the antichain is maximal).

  Similarly, co-atoms are two blocks partitions of the form
  $\{A, \cardone \setminus A\}$. By picking the subset not containing
  $0$, they are in bijection with non-empty subsets of
  $\cardone \setminus \{0\}$, hence there are $2^{\cardone}$ co-atoms.
  By co-atomicity of $\thelattice{\cardone}$, co-atoms form a maximal
  antichain.
\end{proof}

Since $\thelattice{\cardone}$ itself has cardinality $2^\kappa$, there
is no antichain of a greater size. The proof that there is no
maximal antichain shorter than $\kappa$ is more invoved.

\begin{theorem}\label{thm:antichain_tight_bounds}
  For infinite $\cardone$, no non-trivial maximal antichain in
  $\thelattice{\cardone}$ has cardinality less than $\cardone$.
\end{theorem}

\begin{proof}
  Let $\cardtwo < \cardone$ and
  $\chain{A} = \setof{\parti{A}_{\ordthree}}{\ordthree <
    \order{\cardtwo}}$
  be a non-trivial antichain of cardinality $\cardtwo$ in
  $\thelattice{\cardone}$. We will show that $\chain{A}$ is not
  maximal by building a partition $\parti{P}\notin\chain{A}$ that is not comparable 
  to any of its elements.

  \smallskip

  We first consider the case where $\cardtwo$ is \emph{infinite}.

  \textbf{Small partitions.} For each $\parti{A}_{\ordthree}$, we
  build the set $S_{\ordthree}$ of all the elements that are not in a
  singleton block in $\parti{A}_{\ordthree}$: 
  \[
  S_{\ordthree} = \setof{x \in \cardone}{\text{There exists $y \neq x$
      with $\eqinpart{x}{\parti{A}_{\ordthree}}{y}$}}
  \]
  Now, we call a partition $\parti{A}_{\ordthree}$ \emph{small} if
  $\cardinal{S_{\ordthree}} ≤ \cardtwo$, i.e.~there are less 
  than $\cardtwo$ elements in all the non-singleton blocks of
  $\parti{A}_{\ordthree}$. Given an antichain $\chain{A}$, either it
  contains some small partitions or not.

  \textbf{No small partitions.} Suppose that $\chain{A}$ contains no
  small partitions. Since it is not trivial, we know that 
  $\top \notin \chain{A}$. Hence, for each
  $\parti{A}_{\ordthree} \in \chain{A}$ we can pick an element
  $x_{\ordthree}$ which is not in the same block as $0$, i.e.~
  $\noteqinpart{0}{\parti{A}_{\ordthree}}{x_{\ordthree}}$. Note that
  the $x_{\ordthree}$ are not necessarily distinct. Now, build the
  singular partition $\parti{P}$ whose non-singleton block is
  $P_0 = \{0\} \cup \setof{x_{\ordthree}}{\ordthree <
    \order{\cardtwo}}$.
  We claim that $\parti{P} \notin \chain{A}$ but
  $\chain{A} \cup \{\parti{P}\}$ is still an antichain, hence
  $\chain{A}$ is not maximal.

  Because $P_0$ has cardinality at most $\cardtwo$ and all the remaining blocks are 
  singletons, $\parti{P}$ is small and hence not in $\chain{A}$ by assumption.
  Moreover, it is not possible that
  $\parti{A}_{\ordthree} ≤ \parti{P}$. Indeed, because $\parti{P}$
  contains only one non-singleton block, $P_0$, the comparison can
  hold only if $P_0$ contains $S_{\ordthree}$, the union of all the
  non-singleton blocks of $\parti{A}_{\ordthree}$. But $S_{\ordthree}$
  has cardinality larger than $\cardtwo$ (because
  $\parti{A}_{\ordthree}$ is not small) while $P_0$ has cardinality at
  most $\cardtwo$. Hence, $S_{\ordthree} \not \subset P_0$ and
  $\parti{A}_{\ordthree} \not ≤ \parti{P}$.

  Next, for all $\ordthree < \order{\cardtwo}$ we have 
  $\eqinpart{0}{\parti{P}}{x_{\ordthree}}$ by construction, but
  $\noteqinpart{0}{\parti{A}_{\ordthree}}{x_{\ordthree}}$, so
  $\parti{P} \not ≤ \parti{A}_{\ordthree}$. Thus, $\chain{A}\cup\{\parti{P}\}$
  is an antichain, and hence $\chain{A}$ was not maximal.

  % So, if $\chain{A}$ contains no small partition, we can build a
  % partition $\parti{P}$ that can expand the antichain, hence
  % $\chain{A}$ is not a maximal antichain.

  \textbf{With small partitions.} We now look at the case where
  $\chain{A}$ does contain small partitions. Let $Δ$ be the set of
  indices of small partitions:
  $Δ = \setof{\ordthree}{\cardinal{S_{\ordthree}} ≤ \cardtwo}$.  Note
  that by construction, since $\cardinal{\chain{A}} = \cardtwo$, we
  have $\cardinal{Δ} ≤ \cardtwo$. Finally, let $S$ be the union of all
  non-singleton blocks among all the small partitions in $\chain{A}$:
  $S = \bigcup_{\ordthree \in Δ} S_{\ordthree}$.

  It follows from cardinal arithmetic that
  $\cardinal{S} ≤ \cardtwo$: 
  \[
    \cardinal{S} = \cardinal{\bigcup_{\ordthree \in Δ} S_{\ordthree}}
    ≤ \sum_{\ordthree \in Δ} \cardinal{S_{\ordthree}} ≤
    \sum_{\ordthree \in Δ} \cardtwo = \cardinal{Δ} \times \cardtwo ≤
    \cardtwo \times \cardtwo = \cardtwo
  \]

  The construction proceeds as follows:
  Since
  $\cardinal{S} ≤ \cardtwo < \cardone$, we have
  $\cardinal{\cardone \setminus S} = \cardone$. Thus, in
  $\cardone \setminus S$, we can pick $\cardtwo$ different elements
  $y_{\ordthree}$, one for each $\ordthree < \order{\cardtwo}$. 
  We claim that for each $\ordthree < \order{\cardtwo}$ we can find a
  $x_{\ordthree} \in S$ with
  $\noteqinpart{x_{\ordthree}}{\parti{A}_{\ordthree}}{y_{\ordthree}}$.
  Indeed, if that were not the case then the block of
  $\parti{A}_{\ordthree}$ containing $y_{\ordthree}$ would also
  contain the entirety of $S$ and thus any of the small partitions in 
  $\chain{A}$ would be smaller than $\parti{A}_{\ordthree}$,
  contradicting the fact that $\chain{A}$ is an antichain.

  Note that the $y_{\ordthree}$ are distinct by hypothesis and that
  $x_{\ordthree} \neq y_{\ordthree}$, because the former is in $S$ and
  the latter is not. However, it is possible that the $x_{\ordthree}$ are
  not distinct.

  We now build the equivalence (and partition) $\parti{Q}$ generated
  by the relation
  $\setof{(x_{\ordthree}, y_{\ordthree})}{\ordthree <
    \order{\cardtwo}}$.
  Because the $x_{\ordthree}$ are not necessarily distinct, $\parti{Q}$ may 
  contain blocks with more than two elements; indeed transitivity may
  link some of the pairs together. However, because  
  the $y_{\ordthree}$ {\em are} distinct, the separation of every $x_{\ordthree}$ and 
  $y_{\ordthree}$ ensures that we know each non-singleton block of
  $\parti{Q}$ to contain exactly one $x_{\ordthree}$ and one or more
  $y_{\ordthree}$. 

  For every $\ordthree < \order{\cardtwo}$ we have by construction
  $\eqinpart{x_{\ordthree}}{\parti{Q}}{y_{\ordthree}}$, but
  $\noteqinpart{x_{\ordthree}}{\parti{A}_{\ordthree}}{y_{\ordthree}}$.
  It follows from this both that $\parti{Q} \notin \chain{A}$,
  and that $\parti{Q} \not ≤ \parti{A}_{\ordthree}$ for every $\parti{A}_{\ordthree}\in\chain{A}$.

  We now prove non-maximality of $\chain{A}$ by showing that the extension $\chain{A}\cup\{\parti{Q}\}$ is an antichain.
  Consider first the case when $\parti{A}_{\ordthree}$ is small. By construction of
  $S$, there is at least one non-singleton block of
  $\parti{A}_{\ordthree}$ that is contained in $S$ (as $S$ is
  precisely the union of such blocks for all the small partitions). On
  the other hand, each non-singleton block of $\parti{Q}$ contains
  exactly one $x_{\ordthree}$ and all its other elements are
  $y_{\ordthree}$, chosen from $\cardone\setsub S$. Hence, each block of $\parti{Q}$ 
  contains at most one element of $S$ and thus contains no
  non-singleton block of $\parti{A}_{\ordthree}$, whereby
  $\parti{A}_{\ordthree} \not ≤ \parti{Q}$.

  Second, consider the case when $\parti{A}_{\ordthree}$ is not small. By 
  construction, the union of all the non-singleton blocks of
  $\parti{Q}$ is the set
  $\setof{x_{\ordthree}}{\ordthree < \order{\cardtwo}} \cup
  \setof{y_{\ordthree}}{\ordthree < \order{\cardtwo}}$,
  of cardinality $≤ \cardtwo$, while the union of all
  non-singleton blocks of $\parti{A}_{\ordthree}$ has cardinality
  $> \cardtwo$ (as it is not a small partition). 
  Thus, it is not
  possible for the non-singleton blocks of $\parti{A}_{\ordthree}$
  to be contained in the non-singleton blocks of $\parti{Q}$, and we conclude
  $\parti{A}_{\ordthree} \not ≤ \parti{Q}$.
  Thus, $\parti{Q}$ is not in the antichain and is not
  comparable with its elements, hence $\chain{A}$ is not maximal.

  \smallskip

  \textbf{Finite antichains.} We finally turn to the case where
  $\cardtwo = n$ is \emph{finite}. The proof is nearly the same as in the infinite case. We
  now call a partition $\parti{A}_{\ordthree}$ \emph{small} if the union
  of its non-singleton blocks is finite.
  The proof in the case of no small $\parti{A}_{\ordthree}$ is the same
  with $\parti{P}_0 = \{0, x_1, …, x_n\}$ finite, hence
  $\parti{P}$ is small.
  For the case where $\chain{A}$ contains small partitions, since there are only
  finitely many of them, $Δ$ is finite, hence $S$ is also finite.
  Therefore, the union of all non-singleton blocks of $\parti{Q}$ is
  finite, and the rest of the proof holds.
  
  \smallskip

  In conclusion, any antichain of cardinality strictly less than $\cardone$ is
  not maximal, so any maximal antichain in $\thelattice{\cardone}$
  has cardinality at least $\cardone$.
\end{proof}

Note that, while we here restrict our attention to infinite partition lattices,
for finite $n$, maximal non-trivial antichains in
$\thelattice{n}$ can be proved to have cardinality at least $n$ by
induction on $n$.

\smallskip

Theorems \ref{thm:antichain_all_card} and \ref{thm:antichain_tight_bounds} 
tell us that any maximal antichain in $\thelattice{\cardone}$ has
cardinality between $\cardone$ and $2^{\cardone}$ and since both of
the bounds occur as cardinality of a maximal antichain, they are as
tight as possible. 
However, the following construction shows that, in some models where GCH is 
violated, there are maximal antichains of cardinalities between these bounds.
Specifically, it is true whenever there exists $\cardtwo < \cardone < 2^\cardtwo < 2^\cardone$.
% k < k+ < 2^k
\begin{remark}\label{rem:antichain_between_card}
  Let $\cardtwo < \cardone$ be two infinite cardinals. Given a maximal
  antichain $\chain{A}$ in $\thelattice{\cardtwo}$, we can construct
  a maximal antichain $\chain{B}$ of cardinality $\max(\cardone,2^\cardtwo)$
  in $\thelattice{\cardone}$ as follows:

  For each partition $\parti{A} \in \chain{A}$, let $\parti{A'}$ be a
  partition of $\cardone$ constructed from $\parti{A}$ by adding all
  the singleton blocks $\{x\}, x \in \cardone \setminus \cardtwo$ and
  let $\chain{A'}$ be the collection of all these. Next, let
  $\parti{P}_{\ordone, \ordtwo}$ be the singular partition of
  $\thelattice{\cardone}$ with non-singleton block
  $\{\ordone, \ordtwo\}$ and define
  \[
  \chain{B} = \chain{A'} \cup \setof{\parti{P}_{\ordone,
      \ordtwo}}{\ordone < \ordtwo < \order{\cardone} \text{ and } \cardtwo
    ≤ \card{\ordtwo}}
  \]

  \textbf{$\chain{B}$ is an antichain.} Indeed, the $\parti{A'}$ are
  mutually incomparable because $\chain{A}$ is an antichain; the
  $\parti{P}_{\ordone, \ordtwo}$ are mutually incomparable by
  construction; and a $\parti{A'}$ is not comparable with a
  $\parti{P}_{\ordone, \ordtwo}$ because each of the non-singleton
  blocks of the former is included in $\cardtwo$ while the
  non-singleton block of the latter contains an element of
  $\cardone \setminus \cardtwo$ (as we have only taken the $\ordtwo$
  with $\card{\ordtwo} ≥ \cardtwo$).

  \textbf{$\chain{B}$ is maximal.} Indeed, consider a partition
  $\parti{Q'}$ of $\thelattice{\cardone}$ which is not in
  $\chain{B}$. If the elements of $\cardone \setminus \cardtwo$ are
  all in singleton blocks in $\parti{Q'}$ then, by maximality of $\chain{A}$, its restriction
  $\parti{Q} = \restrict{\parti{Q}'}$ is comparable with some $\parti{A}$, 
  and so $\parti{Q'}$ is comparable with
  $\parti{A'}$. On the other hand, if there is a non singleton block
  containing $\ordone, \ordtwo$ with
  $\ordtwo \in \cardone \setminus \cardtwo$ we immediately have
  $\parti{P}_{\ordone, \ordtwo} ≤ \parti{Q}$.

  Because $\cardtwo < \cardone$, there are
  $\card{\cardone \setsub \cardtwo} = \cardone$ different $\ordtwo$ with
  $\cardtwo ≤ \card{\ordtwo}$, thus there are $\cardone$ different
  $\parti{P}_{\ordone, \ordtwo}$ in $\chain{B}$. Because the
  $\parti{A'}$ and $\parti{P}_{\ordone, \ordtwo}$ are distinct,
  $\chain{B}$ has cardinality
  $\cardinal{\chain{A'}} + \cardone = \max(\cardone,
  \cardinal{\chain{A}})$.
  Since we know, by Theorem~\ref{thm:antichain_all_card}, that we can
  build a maximal antichain of cardinality $2^{\cardtwo}$ in
  $\thelattice{\cardtwo}$, we can use this construction to build
  maximal antichains of cardinality $\max(\cardone, 2^{\cardtwo})$ in
  $\thelattice{\cardone}$, for any value of $\cardtwo <
  \cardone$.

  \smallskip

  Thus, for every cardinality $\cardtwo$ such that 
  $\cardtwo < \cardone < 2^\cardtwo$, we can build an antichain
  with cardinality $2^\cardtwo$ with $\cardone < 2^\cardtwo < 2^\cardone$.

  We do not yet know whether all cardinalities
  between the bounds always occur as antichains. 
  Certainly, we can construct non-GCH models
  where there exist cardinals between $\cardone$ and 
  $2^\cardone$ that cannot be written as $2^\cardtwo$. It is not yet known whether
  they can be realized as the cardinality of a maximal 
  antichain through another construction.
\end{remark}
% Do all the cardinalities between the bounds occur?

\section{Complements}
Recall that in a bounded lattice $L$, elements $a,b \in L$ are
\emph{complements} if and only if $a \lub b = \top$ and
$a \glb b = \bot$. We denote by $\compl{\parti{P}}$ the set of all
complements to $\parti{P}$ in $\thelattice{\cardone}$.  For finite
$\cardone = n$, counting the number of elements in $\compl{\parti{P}}$
is a difficult combinatorial problem. The best known estimate, due to
Grieser \cite{Grieser1991144}, is that if
$\parti{P} = \{B_1,\ldots,B_m\}$ is a partition in $\thelattice{n}$,
then the number of complements $\parti{Q}$ of $\parti{P}$ satisfying
$\cardinal{\parti{Q}} = n - m +1$ is
$\prod_{i=1}^m \cardinal{B_i} \cdot (n-m+1)^{m-2}$.

In the following, we prove a succession of lemmas leading up to the
main result of the section, Theorem \ref{thm:number_of_complements},
which gives a complete characterization of the counts of complements
to partitions of infinite cardinals.

For later use, we first recall some fundamental results on cardinal
arithmetic:
\begin{lemma}\label{lem:basic_arith}
  Let $(\cardone_i)_{i \in I}$ be a family of cardinals.  The
  following hold:
  \begin{enumerate}
  \item \label{lem:arith:sum} {\em (\cite{HolzSteffensWeitz}, Lemma
      1.6.3(b.i))} If $\cardone_i > 0$ for all $i \in I$, $I \neq
    \emptyset$ and at least one of the cardinals $\vert I \vert$ and
    $\cardone_i$ (for some $i \in I$) is infinite, then
    \[
    \sum_{i\in I} \cardone_i = \max\{\vert I
    \vert,\sup\setof{\cardone_i}{i \in I}\} = \vert I \vert \cdot
    \sup\setof{\cardone_i}{i \in I}\]
  \item \label{lem:arith:prod} {\em (\cite{HolzSteffensWeitz}, Lemma
      1.6.15(a), Tarski),} If $\cardtwo ≥ \aleph_0$ is a cardinal
    and
    $\sequenceof{\cardone_{\ordone}}{\ordone < \cardtwo}$ is an
    increasing sequence of infinite cardinals, then \[\prod_{\ordone <
      \cardtwo} \cardone_{\ordone} =
    (\sup\setof{\cardone_{\ordone}}{\ordone < \cardtwo})^\cardtwo\]
  \item \label{lem:arith:pow} {\em (\cite{HolzSteffensWeitz}, Lemma
      1.6.15(d), Tarski)} If $\cardone$ is an infinite cardinal,
    then
    \[2^{\cardone} = \left(\sup_{\cardtwo<\cardone}
      2^\cardtwo\right)^{\cofinality{\cardone}}\]
  \end{enumerate}
\end{lemma}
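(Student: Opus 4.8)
The three parts are classical theorems of cardinal arithmetic valid under AC, and the plan is to prove each by trapping the quantity between matching lower and upper bounds. Two standard facts will do most of the work: Hessenberg's absorption law, $\cardone\cdot\cardtwo=\max\{\cardone,\cardtwo\}$ for nonzero cardinals at least one of which is infinite; and the observation that every subset of an infinite cardinal \cardtwo of full cardinality \cardtwo is cofinal in \cardtwo, since a bounded subset would have cardinality strictly below \cardtwo.

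For the sum in part~(1), the lower bound comes for free: each $\cardone_j$ injects into the disjoint union, so $\sup_i\cardone_i\le\sum_{i\in I}\cardone_i$, and since every $\cardone_i\ge 1$ the union contains at least $\card{I}$ points; hence $\sum_{i\in I}\cardone_i\ge\max\{\card{I},\sup_i\cardone_i\}$. For the upper bound I would replace each summand by $\cardthree=\sup_i\cardone_i$ to get $\sum_{i\in I}\cardone_i\le\card{I}\cdot\cardthree$, and then apply absorption, which is available precisely because the hypothesis forces one of $\card{I},\cardthree$ to be infinite while both are nonzero; this yields $\card{I}\cdot\cardthree=\max\{\card{I},\cardthree\}$. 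The two bounds meet, establishing both displayed equalities at once.

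For the product in part~(2), set $\cardthree=\sup_{\ordone<\cardtwo}\cardone_\ordone$. The upper bound $\prod_{\ordone<\cardtwo}\cardone_\ordone\le\prod_{\ordone<\cardtwo}\cardthree=\cardthree^{\cardtwo}$ is immediate, so the content lies in the reverse inequality $\prod_{\ordone<\cardtwo}\cardone_\ordone\ge\cardthree^{\cardtwo}$, and this is the step I expect to be the crux. The plan is to fix a bijection $\cardtwo\times\cardtwo\to\cardtwo$ and use it to partition the index set \cardtwo into \cardtwo pairwise disjoint blocks $\{A_\ordtwo\}_{\ordtwo<\cardtwo}$, each of cardinality \cardtwo. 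Each $A_\ordtwo$, having full cardinality, is cofinal in \cardtwo, so by monotonicity of the sequence $\sup_{\ordone\in A_\ordtwo}\cardone_\ordone=\cardthree$, and therefore $\prod_{\ordone\in A_\ordtwo}\cardone_\ordone\ge\cardthree$. Regrouping the product along this partition gives $\prod_{\ordone<\cardtwo}\cardone_\ordone=\prod_{\ordtwo<\cardtwo}\prod_{\ordone\in A_\ordtwo}\cardone_\ordone\ge\prod_{\ordtwo<\cardtwo}\cardthree=\cardthree^{\cardtwo}$, closing the gap.

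Part~(3) I would deduce from the first two. Choose a strictly increasing sequence $(\cardone_i)_{i<\cofinality{\cardone}}$ of cardinals below \cardone that is cofinal in \cardone; then $\cardone=\sum_{i<\cofinality{\cardone}}\cardone_i$ by part~(1), so $2^{\cardone}=2^{\sum_i\cardone_i}=\prod_{i<\cofinality{\cardone}}2^{\cardone_i}$. Writing $\cardthree=\sup_{\cardtwo<\cardone}2^{\cardtwo}$, each factor satisfies $2^{\cardone_i}\le\cardthree$, and since $\cardtwo\mapsto 2^{\cardtwo}$ is monotone and the $\cardone_i$ are cofinal we get $\sup_i 2^{\cardone_i}=\cardthree$; applying part~(2) to $(2^{\cardone_i})_i$ then yields $\prod_i 2^{\cardone_i}=\cardthree^{\cofinality{\cardone}}$, as claimed. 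The point demanding care—and, besides the partition trick of part~(2), the main obstacle—is that part~(2) as stated requires a \emph{strictly} increasing sequence, whereas the $2^{\cardone_i}$ may be only eventually constant (when the supremum is attained). In that degenerate case I would argue directly: the upper bound $\cardthree^{\cofinality{\cardone}}$ still holds, and restricting the product to a tail on which $2^{\cardone_i}=\cardthree$ (a set of size $\cofinality{\cardone}$) gives $\prod_i 2^{\cardone_i}\ge\cardthree^{\cofinality{\cardone}}$, recovering the equality.
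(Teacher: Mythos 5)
The paper does not actually prove this lemma: all three parts are quoted from Holz--Steffens--Weitz with explicit citations, so there is no in-paper argument to compare against. Your proofs of parts (1) and (2) are the standard ones and are correct: absorption plus a two-sided squeeze for the sum, and the partition of the index set \cardtwo into \cardtwo pairwise disjoint cofinal blocks of size \cardtwo for the product. Note that your argument for (2) uses only that the sequence is non-decreasing, a point that matters below.

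Part (3), however, has a genuine gap. Your first step is to ``choose a strictly increasing sequence $(\cardone_i)_{i<\cofinality{\cardone}}$ of cardinals below \cardone that is cofinal in \cardone''. Such a sequence exists only when \cardone is a limit cardinal: if $\cardone = \cardfour^+$ is a successor, every cardinal below \cardone is $\le \cardfour$, so no set of \emph{cardinals} below \cardone is cofinal in \cardone, and the decomposition $\cardone = \sum_i \cardone_i$ is unavailable. That case must be treated separately, though it is immediate, since then $\cofinality{\cardone} = \cardone$ and $2^\cardone \le \left(\sup_{\cardtwo<\cardone} 2^\cardtwo\right)^{\cardone} \le \left(2^\cardone\right)^{\cardone} = 2^\cardone$. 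A second, smaller issue: your dichotomy for the sequence $\left(2^{\cardone_i}\right)_i$ --- strictly increasing versus eventually constant --- omits the mixed case in which repetitions occur but the supremum is not attained (e.g.\ $2^{\aleph_0} = 2^{\aleph_1} < 2^{\aleph_2} = 2^{\aleph_3} < \cdots$), where neither branch of your argument applies as written. The cleanest repair for both points of part (3) is to observe that your own proof of part (2) goes through verbatim for arbitrary non-decreasing sequences of infinite cardinals, which dissolves the dichotomy entirely; only the successor-cardinal case then needs the separate one-line argument above.
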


\begin{lemma}\label{lem:general_condition_two}
  Let $\cardone$ be an infinite cardinal, and $\parti{P}\notin \{\bot,\top\}$ be a
  partition of $\cardone$. Then, $\cardinal{\compl{\parti{P}}} ≥
  2^{\cardinal{\parti{P}}}$.
\end{lemma}

\begin{proof}
  If $\cardinal{\parti{P}}$ is finite, then there must exist at least
  one block $B$ of cardinality $\cardone$ in $\parti{P}$ (otherwise,
  since $\cup \parti{P} = \cardone$, we would have
  $\cofinality{\cardone}$ finite). Any singular partition whose
  non-singleton block contains exactly one element from each block of
  $\parti{P}$ is a complement to $\parti{P}$. Since there are
  $\cardone$ choices for the element in $B$, there are at least
  $\cardone$ complements to $\parti{P}$. Thus
  $\cardinal{\compl{\parti{P}}} ≥ \cardone >
  2^{\cardinal{\parti{P}}}$.

  Assume now that $\cardinal{\parti{P}}$ is infinite. Since $\parti{P}
  \ne \bot$, we can choose a block $B_0$ from $\parti{P}$ containing
  distinct elements $ι \ne υ$.
  Write $\parti{P}' = \parti{P}\setsub \{B_0\}$, and note that
  $\cardinal{\parti{P}'} = \cardinal{\parti{P}}$.  By the Axiom of
  Choice, we may select from each block $B\in \parti{P}'$ an element
  $\ordfour(B)$. Given any subset $\parti{P}_1 \subseteq \parti{P}'$,
  let $\parti{P}_2 = \parti{P}' \setsub \parti{P}_1$. If we now define
  \begin{equation*}
    \begin{split}
      Q_1 & = \{ι\} \cup \setof{\ordfour(B)}{B\in \parti{P}_1}\\
      Q_2 & = \{υ\} \cup \setof{\ordfour(B)}{B\in \parti{P}_2}\\
      \parti{Q}_s & = %'s'ingletons
      \setof{\{\ordfour\}}{\ordfour\in\cardone\setsub(Q_1\cup Q_2)}
    \end{split}
  \end{equation*}
  then $\parti{Q} = \{Q_1,Q_2\} \cup \parti{Q}_s$ is a
  complement to $\parti{P}$, as can be verified as follows:
  (i) $\parti{Q}$ is a partition, since each element is included in
  exactly one block. (ii) $\parti{P} \glb \parti{Q} = \bot$, since
  each block $Q\in\parti{Q}$ contains at most one element from each
  block in $\parti{P}$.  (iii) Finally,
  $\parti{P} \lub \parti{Q} = \top$, i.e.
  $\eqinpart{x}{\parti{P} \lub \parti{Q}}{y}$ for all
  $x, y \in \cardone$: Consider $x, y \in \cardone$, and write
  $B_x, B_y$ for the blocks in $\parti{P}$ containing $x$ and $y$,
  respectively. If $B_x, B_y \in \parti{P}_1 \cup \{B_0\}$, then there
  exists\footnotemark\ a $\ordfour_x \in Q_1 \cap B_x$ and a
  $\ordfour_y \in Q_1 \cap B_y$. This yields
  \[
  \bigeqinpart{x}{B_x}{%
    \bigeqinpart{\ordfour_x}{Q_1}{%
      \bigeqinpart{\ordfour_y}{B_y}{y}}}
  \]
  \footnotetext{Namely, $\ordfour_x = \ordfour(B_x)$ if
    $B_x \in \parti{P}_1$ and $\ordfour_x = ι$ if $B_x = B_0$.}

  If instead $B_x \in \parti{P}_1 \cup \{B_0\}$ and
  $B_y \in \parti{P}_2$, there is a $\ordfour_x \in Q_1 \cap B_x$, whereby
  % and $\ordfour_y \in Q_2 \cap B_y$
  \[
  \bigeqinpart{x}{B_x}{%
    \bigeqinpart{\ordfour_x}{Q_1}{%
      \bigeqinpart{ι}{B_0}{%
        \bigeqinpart{υ}{Q_2}{%
          \bigeqinpart{\ordfour(B_y)}{B_y}{y}}}}}
  \]
  The remaining two cases, $B_x,B_y \in \parti{P}_2 \cup \{B_0\}$ and
  $B_x \in \parti{P}_2 \cup \{B_0\}$, $B_y \in \parti{P}_1$, are
  symmetrical. Thus $\parti{Q} \in \compl{\parti{P}}$.  Clearly, two
  different choices of the subset $\parti{P}_1 \subseteq \parti{P}'$
  yields different complements $\parti{Q}$, whereby
  $\cardinal{\compl{\parti{P}}} ≥ 2^{\cardinal{\parti{P}}}$.
\end{proof}

\begin{lemma}\label{lem:noblock_high}
  Let $\cardone$ be an infinite cardinal, and $\parti{P}\notin \{\bot,\top\}$ be a
  partition of $\cardone$.  If $\parti{P}$ contains no block of
  cardinality $\cardone$, then
  $\cardinal{\compl{\parti{P}}} = 2^{\cardone}$
\end{lemma}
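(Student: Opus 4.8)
The plan is to prove matching bounds. The upper bound is immediate: every complement lies in \thelattice{\cardone}, so $\card{\compl{\parti{P}}} \le \card{\thelattice{\cardone}} = 2^{\cardone}$. For the lower bound I would split on the cardinality of \parti{P}. If $\card{\parti{P}} = \cardone$, then Lemma~\ref{lem:general_condition_two} already yields $\card{\compl{\parti{P}}} \ge 2^{\card{\parti{P}}} = 2^{\cardone}$ and we are done. So the real work is the case $\card{\parti{P}} < \cardone$. Here I would first observe that, since the blocks of \parti{P} are disjoint and cover \cardone, we have $\cardone = \sum_{B \in \parti{P}} \card{B}$; as this sum is infinite, at least one of $\card{\parti{P}}$ and $\sup_{B}\card{B}$ is infinite, so the additivity formula of Lemma~\ref{lem:basic_arith} applies and gives $\cardone = \max\{\card{\parti{P}},\, \sup_{B \in \parti{P}} \card{B}\}$. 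Because $\card{\parti{P}} < \cardone$, this forces $\sup_{B \in \parti{P}} \card{B} = \cardone$; that is, the block sizes are cofinal in \cardone, even though each block has cardinality $< \cardone$ by hypothesis.

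In this case I would construct $2^{\cardone}$ complements explicitly by a toggling argument. First, reserve one element $r_B$ from each block $B$ and set $R = \setof{r_B}{B \in \parti{P}}$; since $R$ meets every block of \parti{P} exactly once, it is a partial transversal. Writing $U = \cardone \setsub R$, the cofinality of the block sizes lets me split the blocks of \parti{P} into two families $\chain{G}_1,\chain{G}_2$ so that $\bigcup\chain{G}_1$ and $\bigcup\chain{G}_2$ each have cardinality \cardone: take a strictly increasing cofinal sequence of blocks and alternate them between the two families, so each family inherits a cofinal set of block sizes, and distribute the remaining blocks arbitrarily. Removing the reserved elements, both $\bigcup\chain{G}_1 \cap U$ and $\bigcup\chain{G}_2 \cap U$ still have cardinality \cardone, so a bijection between them produces a family $\setof{\{x_\alpha,y_\alpha\}}{\alpha < \cardone}$ of \cardone pairwise-disjoint pairs that together exhaust $U$. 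Each such pair straddles two distinct blocks of \parti{P}, since its two endpoints lie in different families and hence in different blocks.

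For each $s \in \{0,1\}^{\cardone}$ I would define the partition $\parti{Q}_s$ whose blocks are: the single block $R$; the pair $\{x_\alpha,y_\alpha\}$ for every $\alpha$ with $s(\alpha)=1$; and the two singletons $\{x_\alpha\},\{y_\alpha\}$ for every $\alpha$ with $s(\alpha)=0$. Each $\parti{Q}_s$ is a complement of \parti{P}: every block is a partial transversal ($R$ by construction, each straddling pair because its endpoints lie in distinct blocks, and singletons trivially), so $\parti{P}\land\parti{Q}_s = \bot$; and the block $R$ meets every block of \parti{P}, forcing all blocks of \parti{P} to merge in $\parti{P}\lor\parti{Q}_s$, so $\parti{P}\lor\parti{Q}_s = \top$. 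Distinct $s$ give distinct complements, for if $s(\alpha)\neq s'(\alpha)$ then $x_\alpha$ and $y_\alpha$ share a block in exactly one of $\parti{Q}_s,\parti{Q}_{s'}$. Hence $\card{\compl{\parti{P}}} \ge 2^{\cardone}$, and together with the upper bound this gives $\card{\compl{\parti{P}}} = 2^{\cardone}$.

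The main obstacle is precisely the case $\card{\parti{P}} < \cardone$. There Lemma~\ref{lem:general_condition_two} is genuinely too weak, since $2^{\card{\parti{P}}}$ may fall strictly below $2^{\cardone}$ when \cardone is singular; the crux is therefore to manufacture \cardone \emph{independent} binary choices rather than merely $\card{\parti{P}}$ of them. The toggling construction supplies exactly these, and the delicate point is splitting the blocks into two families each of full cardinality \cardone, which is what the cofinality-of-block-sizes conclusion from the cardinal-arithmetic step is used for.
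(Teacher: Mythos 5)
Your proof is correct, and while it shares the paper's overall skeleton (upper bound from $\cardinal{\thelattice{\cardone}} = 2^\cardone$; the case $\cardinal{\parti{P}} = \cardone$ dispatched by Lemma~\ref{lem:general_condition_two}; the additivity formula forcing $\sup_{B\in\parti{P}}\cardinal{B} = \cardone$ when $\cardinal{\parti{P}} < \cardone$; a transversal block to guarantee the join is $\top$), your counting argument in the hard case is genuinely different from the paper's. The paper also fixes a cofinal sequence of blocks $B_\ordone$, but it generates complements by varying a \emph{bijection} $\sigma_\ordone$ between (pieces of) consecutive blocks, and then must evaluate $\prod_{\ordone<\cofinality{\cardone}} 2^{\cardinal{B_\ordone}}$ via Tarski's product formula and the identity $2^\cardone = \bigl(\sup_{\cardtwo<\cardone}2^\cardtwo\bigr)^{\cofinality{\cardone}}$ (Lemma~\ref{lem:basic_arith}(2),(3)). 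You instead manufacture a single fixed family of $\cardone$ pairwise-disjoint cross-block pairs and toggle each independently, so the count $2^\cardone$ falls out of independent binary choices with no nontrivial cardinal arithmetic beyond additivity; the price is the small extra step of splitting $\parti{P}$ into two subfamilies whose unions each have cardinality $\cardone$ (your ``alternation'' of a strictly increasing cofinal sequence works, with ordinal parity handling limit positions, since both parity classes are cofinal in $\cofinality{\cardone}$). Your version is arguably more elementary; the paper's version produces complements with blocks of unbounded size, but that extra generality is not needed for the lemma.
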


\begin{proof}
  If $\parti{P}$ contains no block of cardinality $\cardone$, then
  $\cardinal{\parti{P}} ≥ \cofinality{\cardone}$ (because
  $\cardone = \bigcup_{B\in \parti{P}} B$). Then
  Lemma~\ref{lem:general_condition_two} implies
  $\cardinal{\compl{\parti{P}}} ≥ 2^{\cardinal{\parti{P}}} ≥
  2^{\cofinality{\cardone}}$.
  Thus, if $\cardone$ is a regular cardinal, or if
  $\cardinal{\parti{P}} = \cardone$, we immediately obtain
  $\cardinal{\compl{\parti{P}}} = 2^\cardone$.

  Assume now that $\cardone$ is singular and
  $\cofinality{\cardone} ≤ \cardinal{\parti{P}} < \cardone$.  We can
  construct a complement $\parti{Q}$ to $\parti{P}$ as follows:
  \begin{enumerate}
  \item Let $A_0$ be a set containing exactly one element
    $\ordfour(B)$ from each block $B$ of $\parti{P}$. Let
    $\parti{P}' = \setof{B \setsub A_0}{B \in \parti{P}} \setminus \{
    \emptyset\}$, whereby $\cup\parti{P}' = \cardone\setsub A_0$.
  \item Any partition $\parti{Q}$ of $\cardone$ that contains $A_0$ as
    a block will have $\parti{P} \lub \parti{Q} = \top$: If
    $\ordtwo \in B_1$ and $\ordthree \in B_2$ for
    $B_1,B_2 \in \parti{P}$, then $\eqinpart{\ordtwo}{B_1}{%
      \eqinpart{\ordfour(B_1)}{A_0}{%
        \eqinpart{\ordfour(B_2)}{B_2}{\ordthree}}}$. %
    Hence, if a partition $\parti{Q}'$ of $\cardone\setsub A_0$
    satisfies $\cardinal{A'\cap B'} ≤ 1$ for all
    $A'\in \parti{Q}', B'\in\parti{P}'$, then
    $\parti{Q} = \{A_0\}\cup \parti{Q}'$ is a complement to
    $\parti{P}$.
  \item Because $\cardinal{A_0} = \cardinal{\parti{P}} < \cardone$, we
    have $\cardinal{\cardone \setsub A_0} = \cardone$, and hence
    $\sum_{B \in \parti{P}'} \cardinal{B} = \cardinal{\cup \parti{P}'}
    = \cardone$.
    By Lemma \ref{lem:basic_arith}(\ref{lem:arith:sum}), and using
    $\cardinal{\parti{P}'} < \cardone$, we obtain
    \[
    \cardone = \sum_{B \in \parti{P}'} \cardinal{B} = \max
    \left\{\cardinal{\parti{P}'}, \sup_{B \in \parti{P}'}
      \cardinal{B}\right\} = \sup_{B \in \parti{P}'} \cardinal{B}
    \]
  \item By definition of cofinality, there exists an increasing
    sequence
    $\sequenceof{\cardthree_\ordone}{\ordone < \cofinality{\cardone}}$
    of infinite cardinals strictly less than $\cardone$ that sums to
    $\cardone$. Because $\cardone$ is singular, Lemma
    \ref{lem:basic_arith}(\ref{lem:arith:sum}) implies
    $\cardone = \sup_{\ordone<\cofinality{\cardone}}
    \cardthree_\ordone$.

    Since also $\cardone = \sup\setof{|B|}{B \in \parti{P}'}$, we can
    choose by AC for every $\ordone<\cofinality{\cardone}$ some
    $B_\ordone\in \parti{P}'$ such that
    $|B_\ordone| ≥ \cardthree_\ordone$.  The sequence
    $\sequenceof{|B_\ordone|}{\ordone<\cofinality{\cardone}}$ clearly
    has supremum $\cardone$.

  \item For every successor ordinal
    $\ordone+1 < \cofinality{\cardone}$, split the block
    $B_{\ordone+1}$ into a small subset $B^-_{\ordone+1}$ of
    cardinality $\cardinal{B_\ordone}$ and a large subset
    $B^+_{\ordone+1}$ of cardinality $\cardinal{B_{\ordone+1}}$.  Then
    for every limit ordinal $\ordone$, define
    $B^+_\ordone = B_\ordone$.  Now choose for every ordinal
    $\ordone < \cofinality{\cardone}$ a bijection
    $σ_\ordone\colon B^+_\ordone \to B^-_{\ordone+1}$; let
    $A_{\ordone}^{\ordtwo} = \{ \ordtwo, σ_{\ordone}(\ordtwo) \}$ and
    let
    $A_\ordone = \setof{ A_{\ordone}^{\ordtwo}}{\ordtwo\in
      B^+_\ordone}$,
    it is a partition of $B_{\ordone}^+ \cup B_{\ordone+1}^-$,
    consisting of two-element blocks.
  \item Finally,
    % fill up everything that we haven't touched with singletons:
    let
    \[ S = \cardone\setsub \left(A_0\cup
      \bigcup_{\ordone<\cofinality{\cardone}} B_\ordone\right).
    \]
    Then it is easy to verify that
    \[\parti{Q} = \setof{\{\ordtwo\}}{\ordtwo\in S} \cup \{A_0\}
    \cup \bigcup_{\ordone < \cofinality{\cardone}} A_\ordone
    \]
    is a complement to $\parti{P}$. Indeed, each element of
    $A_{\ordone}$ contains exactly two elements from two different
    blocks $B_{\ordone}$ and $B_{\ordone+1}$ of $\parti{P}'$, and the
    other blocks are either $A_0$ or singletons. Thus, for any
    $A' \in \parti{Q}$ other than $A_0$ and any $B' \in \parti{P}'$,
    we have $\cardinal{A' \cap B'} ≤ 1$, and by Point (2) above,
    $\parti{Q}$ is a complement to $\parti{P}$.
  \end{enumerate}
  For $σ_{\ordone} \neq σ'_{\ordone}$ there exists
  $\ordtwo \in B^+_\ordone$ with
  $σ_{\ordone}(\ordtwo) \neq σ'_{\ordone}(\ordtwo)$ and hence
  $\{\ordtwo,σ_{\ordone}(\ordtwo)\} \neq
  \{\ordtwo,σ'_{\ordone}(\ordtwo)\}$,
  whence each choice of
  $\sequenceof{σ_\ordone}{\ordone<\cofinality{\cardone}}$ yields a
  distinct complement.  There are
  $\cardinal{B^-_{\ordone+1}}^{\cardinal{B^+_\ordone}} =
  \cardinal{B_\ordone}^{\cardinal{B_\ordone}} =
  2^{\cardinal{B_\ordone}}$
  ways to choose each bijection $σ_\ordone$. Since
  $\cardone = \sup_{\ordone < \cofinality{\cardone}}
  \cardinal{B_{\ordone}}$,
  for each $\cardtwo < \cardone$, there exists $\ordone$ with
  $\cardinal{B_{\ordone}} > \cardtwo$ hence
  $2^{\cardinal{B_{\ordone}}} ≥ 2^{\cardtwo}$. Consequently,
  $\sup_{\ordone < \cofinality{\cardone}} 2^{\cardinal{B_{\ordone}}} ≥
  \sup_{\cardtwo < \cardone} 2^{\cardtwo}$.
  Then, Lemma \ref{lem:basic_arith}(\ref{lem:arith:prod}) and
  (\ref{lem:arith:pow}) yields
  \[
  \cardinal{\compl{\parti{P}}} ≥ \prod_{\ordone<\cofinality{\cardone}}
  2^{\cardinal{B_\ordone}} =
  \left(\sup_{\ordone<\cofinality{\cardone}} 2^{\cardinal{B_\ordone}}
  \right)^{\cofinality{\cardone}} ≥ \left(\sup_{\cardtwo<\cardone}
    2^{\cardtwo} \right)^{\cofinality{\cardone}} = 2^\cardone
  \]
  As $2^{\cardone} = \cardinal{\thelattice{\cardone}}$ is an upper
  bound to the number of complements to $\parti{P}$, we obtain
  $\cardinal{\compl{\parti{P}}} = 2^{\cardone}$.
\end{proof}

\begin{lemma}\label{lem:oneblock_high}
  Let $\cardone$ be an infinite cardinal, and $\parti{P} \in \thelattice{\cardone}$
  be any partition of $\cardone$.  If $\parti{P}$ contains a block
  $B$ of cardinality $\cardone$, then
  $\cardinal{\compl{\parti{P}}} = \cardone^{\cardtwo}$, where
  $\cardtwo = \cardinal{\cardone \setsub B}$.
\end{lemma}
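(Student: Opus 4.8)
The plan is to prove the equality by establishing the two inequalities $\cardinal{\compl{\parti{P}}} \le \cardone^{\cardtwo}$ and $\cardinal{\compl{\parti{P}}} \ge \cardone^{\cardtwo}$ separately, writing throughout $R = \cardone\setsub\bar{B}$, so that $\cardinal{R} = \cardtwo \le \cardone$. The whole argument rests on one structural observation about an arbitrary complement $\parti{Q}$ of \parti{P}: the condition $\parti{P}\land\parti{Q} = \bot$ forces every block $Q\in\parti{Q}$ to meet every block of \parti{P}---and in particular the block $\bar{B}$---in at most one point. Hence $\cardinal{Q\cap\bar{B}}\le 1$, and any block $Q$ with $Q\cap R = \emptyset$ is forced to be a singleton contained in $\bar{B}$. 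This lets me encode \parti{Q} by the data $(\parti{R},\psi)$, where $\parti{R} = \setof{Q\cap R}{Q\in\parti{Q},\ Q\cap R\neq\emptyset}$ is the partition of $R$ traced out by \parti{Q}, and $\psi$ is the partial function sending a block $p\in\parti{R}$ to the unique element of $\bar{B}$ in the \parti{Q}-block whose $R$-trace is $p$ (undefined if that block misses $\bar{B}$). Since \parti{Q} is completely recovered from $(\parti{R},\psi)$---its $R$-meeting blocks are the $p\cup\{\psi(p)\}$ (or $p$ where $\psi(p)$ is undefined), and its remaining blocks are the singletons $\{b\}$ for $b\in\bar{B}$ outside the range of $\psi$---the map $\parti{Q}\mapsto(\parti{R},\psi)$ is injective.

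For the upper bound I would then simply count the possible data. For a fixed partition \parti{R} of $R$ there are at most $\cardone^{\cardinal{\parti{R}}}\le\cardone^{\cardtwo}$ partial functions $\parti{R}\to\bar{B}$, since $\cardinal{\parti{R}}\le\cardtwo$ and $\cardone+1 = \cardone$. The number of partitions \parti{R} of a set of size \cardtwo is at most $2^{\cardtwo}$ (each is an equivalence relation on $R$, hence a subset of $R\times R$). Summing over all \parti{R} gives $\cardinal{\compl{\parti{P}}}\le 2^{\cardtwo}\cdot\cardone^{\cardtwo} = \cardone^{\cardtwo}$, using $2^{\cardtwo}\le\cardone^{\cardtwo}$ and standard cardinal arithmetic (Lemma~\ref{lem:basic_arith}(\ref{lem:arith:sum})). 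The case \cardtwo finite---in particular $\cardtwo = 0$, where $\parti{P}=\top$ has $\bot$ as its only complement---is handled by the same bookkeeping, giving $\cardone^{\cardtwo}=\cardone$ respectively $=1$.

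For the lower bound I would exhibit $\cardone^{\cardtwo}$ distinct complements explicitly. To each injection $\phi\colon R\hookrightarrow\bar{B}$ associate the partition $\parti{Q}_{\phi}$ whose blocks are the doubletons $\{r,\phi(r)\}$ for $r\in R$ together with the singletons $\{b\}$ for $b\in\bar{B}\setsub\phi(R)$. Injectivity of $\phi$ makes $\parti{Q}_{\phi}$ a genuine partition; it meets each block of \parti{P} in at most one point, so $\parti{P}\land\parti{Q}_{\phi} = \bot$; and each $r\in R$ is joined to $\bar{B}$ through its block $\{r,\phi(r)\}$ while all of $\bar{B}$ lies in a single block of \parti{P}, so $\parti{P}\lor\parti{Q}_{\phi} = \top$. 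Distinct injections yield distinct partitions (the block containing a fixed $r$ differs), so it remains to count injections $R\hookrightarrow\bar{B}$: for $\cardtwo\ge 1$, splitting $\bar{B}$ into \cardtwo disjoint pieces each of size \cardone (possible since $\cardtwo\cdot\cardone = \cardone$) and choosing one coordinate from each piece produces $\cardone^{\cardtwo}$ injections, so their number is exactly $\cardone^{\cardtwo}$ (and for $\cardtwo = 0$ the empty injection gives $\bot$, matching $\cardone^{0}=1$).

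The main obstacle is the structural step underlying the upper bound: making precise that a complement is nothing more than a transversal partition of the small set $R$ decorated with a choice of ``anchor'' elements in $\bar{B}$, and in particular that every block avoiding $R$ must degenerate to a singleton. Once this encoding is in place the cardinal-arithmetic bookkeeping is routine, and the lower-bound construction is direct.
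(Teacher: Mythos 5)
Your proof is correct and follows essentially the same route as the paper: the lower bound via injections $R\hookrightarrow\bar{B}$ yielding doubleton-plus-singleton complements is identical, and the upper bound is the same counting argument, merely organized around a slightly different (but equivalent) injective encoding --- your trace-partition-plus-anchor data $(\parti{R},\psi)$ versus the paper's choice of $\cup\parti{Q}'$ together with its partition into non-singleton blocks. Your explicit justification that there are exactly $\cardone^{\cardtwo}$ injections is a small improvement over the paper's bare assertion.
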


\begin{proof}
  Assume that $\parti{P}$ has a block $B$ of cardinality $\cardone$,
  and write $\parti{P}$ as the disjoint union
  $\parti{P} = \{B\}\cup\parti{P}'$.  Let
  $\bar{B} = \cardone \setminus B$, and observe that
  $\bar{B} = \cup\parti{P}'$.  Denote
  $\cardtwo = \cardinal{\cup\parti{P}'} = \cardinal{\bar{B}} =
  \cardinal{\cardone \setsub B} ≤ \cardone$.
  If $\cardinal{\bar{B}} = 0$, then $\parti{P} = \top$, which has
  exactly one complement, namely $\bot$, whereby
  $\cardinal{\compl{\parti{P}}} = 1 = \cardone^0$ as desired.  Hence,
  in the following we can assume that $\parti{P} \ne \top$, such that
  $1 ≤ \cardtwo ≤ \cardone$.

  \textbf{Lower bound.} To show
  $\cardinal{\compl{\parti{P}}} ≥ \cardone^\cardtwo$, choose any
  injection $σ \colon \bar{B} \to B$, and write
  $B_\cardtwo = σ \left(\bar{B} \right)$,
  $B_s = B \setsub B_\cardtwo$. % 's' for 'singletons'.
  If we now define $A_\ordone = \{ \ordone, σ(\ordone) \}$, then the
  partition
  $\parti{Q} = \setof{ A_\ordone }{ \ordone \in \bar{B}} \cup
  \setof{\{\ordtwo\}}{\ordtwo \in B_s}$
  is a complement to $\parti{P}$, as is easily verified by checking
  each of the properties: (i) Every element of $\cardone$ is either in
  $\bar{B}$, $B_\cardtwo$, or $B_s$, hence $\parti{Q}$ is a partition;
  (ii) each block of $\parti{Q}$ is either a singleton (from $B_s$) or
  a doubleton (one of the $A_{\ordone}$) with one element in $B$ and
  one out, hence it intersects each block of $\parti{P}$ in at most
  one point; (iii) any $\ordone$ and $\ordone'$ are linked through
  their images $σ(\ordone)$ and $σ(\ordone')$ in $B$.

  There are $\cardone^\cardtwo$ ways of choosing $σ$, and each way
  leads to a distinct complement.  Hence,
  $\cardinal{\compl{\parti{P}}} ≥ \cardone^\cardtwo$.

  \textbf{Upper bound.} Observe that we have
  $\cardinal{\bar{B}} ≥
  \cardinal{\parti{P}'}$.
  Let now $\parti{Q}$ be any complement to $\parti{P}$. Then
  \begin{enumerate}
  \item $\parti{P} \glb \parti{Q} = \bot$ implies that for every $A
    \in \parti{Q}$ and $B \in \parti{P}$, $\cardinal{A \cap B} ≤ 1$,
    and hence $\cardinal{A} ≤ \cardinal{\parti{P}} ≤ \cardtwo$.
  \item Let $\parti{Q}' = \setof{A' \in \parti{Q}}{\cardinal{A'} ≥ 2}$
    be the set of non-singleton blocks of $\parti{Q}$. Since
    $\cardinal{A' \cap B} ≤ 1$, each $A'$ must intersect $\bar{B}$ in
    at least one point. As blocks of a partition, the $A'$ are
    pairwise disjoint, as are consequently these intersections. Hence
    choosing (by AC) an element in each $A' \cap \bar{B}$ defines an
    injection from $\parti{Q}'$ to $\bar{B}$, whereby
    $\cardinal{\parti{Q}'} ≤ \bar{B} ≤ \cardtwo$.
  \item An upper bound for the number of complements to $\parti{P}$
    can then be found in the following way: Specifying $\parti{Q}'$
    uniquely determines the complement $\parti{Q}$. $\parti{Q}$ has
    $\cardinal{\parti{Q}'} ≤ \cardtwo$ non-singleton blocks, each of
    size at most $\cardinal{\parti{P}} ≤ \cardtwo$, yielding
    $\cardinal{\bigcup \parti{Q}'} ≤ \cardinal{\cardtwo \times
      \cardtwo} = \cardtwo$.
    A complement $\parti{Q}$ is fully specified by (i) the union
    $\bigcup\parti{Q}'$ of the non-singleton blocks; and (ii) the
    partition of these unions into blocks of $\parti{Q}'$.  Write
    $ε = \cardinal{\bigcup \parti{Q}'}$. For each cardinality $ε$ that
    $\bigcup\parti{Q}'$ can attain, there are at most
    $\cardtwo^{ε} ≤ \cardone^ε$ ways to select the elements
    $\bigcup\parti{Q}'$ from $\bar{B}$. Since $\parti{Q}'$ is a
    partition of $ε$, there are at most
    $\cardinal{\thelattice{ε}} = 2^ε$ distinct ways to partition these
    elements into blocks of $\parti{Q}'$.
    Letting now $ε$ range over all potentially allowed cardinalities,
    i.e. all less than or equal to $\cardtwo$, we find
    \[
    \cardinal{\compl{\parti{P}}} ≤ \sum_{ε ≤ \cardtwo} 2^ε \cdot
    \cardtwo^{ε} ≤ \sum_{ε ≤ \cardtwo} 2^ε \cdot
      \cardone^{ε} = \sum_{ε ≤ \cardtwo} \cardone^{ε} =
    \cardone^\cardtwo
    \]
  \end{enumerate}
  The above applies both to finite and infinite
  $\cardtwo = \cardinal{\cardone \setsub B}$.
\end{proof}

\begin{corollary}\label{cor:compl_largeblocks}
  Let $\cardone$ be infinite and $\parti{P}\notin \{\bot,\top\}$ be a
  partition of $\cardone$.  If $\parti{P}$ contains two or more blocks
  of cardinality $\cardone$, then
  $\cardinal{\compl{\parti{P}}} = 2^\cardone$.
\end{corollary}

\begin{proof}
  If two blocks, $B_1$ and $B_2$, have cardinality
  $\cardone$, then $\cardinal{\cardone\setsub B_1} = \cardone$, and
  Lemma \ref{lem:oneblock_high} yields
  $\cardinal{\compl{\parti{P}}} ≥ \cardone^\cardone = 2^\cardone$.
\end{proof}

\begin{lemma}\label{lem:general_condition_one}
  Let $\cardone$ be infinite and $\parti{P} \notin \{\bot,\top\}$ be a
  partition of $\cardone$.  Then,
  $\cardone ≤ \cardinal{\compl{\parti{P}}} ≤ 2^\cardone$.
\end{lemma}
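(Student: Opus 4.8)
The plan is to obtain both bounds almost immediately from results already in hand, handling the lower bound by a case split on whether $\parti{P}$ possesses a block of cardinality \cardone.

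For the upper bound I would simply note that $\compl{\parti{P}}$ is, by definition, a subset of $\thelattice{\cardone}$, and that for infinite \cardone the whole lattice $\thelattice{\cardone}$ has cardinality $2^{\cardone}$ (as recorded at the end of Section~\ref{sec:prels}). Hence $\cardinal{\compl{\parti{P}}} \le 2^{\cardone}$ with no further argument.

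For the lower bound $\cardone \le \cardinal{\compl{\parti{P}}}$ I would distinguish two exhaustive cases according to the block structure of \parti{P}. If \parti{P} contains \emph{no} block of cardinality \cardone, then Lemma~\ref{lem:noblock_high} gives $\cardinal{\compl{\parti{P}}} = 2^{\cardone}$, which is certainly $\geq \cardone$. If instead \parti{P} contains a block $\bar{B}$ of cardinality \cardone, then since $\parti{P} \ne \top$ there is at least one element lying outside $\bar{B}$, so $\cardtwo := \cardinal{\cardone \setsub \bar{B}} \ge 1$; Lemma~\ref{lem:oneblock_high} then yields $\cardinal{\compl{\parti{P}}} = \cardone^{\cardtwo} \ge \cardone^{1} = \cardone$. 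As the two cases cover every non-trivial \parti{P}, the bound $\cardone \le \cardinal{\compl{\parti{P}}}$ holds in all cases, completing the proof.

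There is no real obstacle here: the statement is a consolidation of the preceding lemmas, and the only points requiring care are verifying that the case split is exhaustive and that the hypotheses of the cited lemmas are met — in particular that $\cardtwo \ge 1$ in the second case, which is exactly where the assumption $\parti{P} \ne \top$ is used. I would emphasize that the earlier Lemma~\ref{lem:general_condition_two}, giving $\cardinal{\compl{\parti{P}}} \ge 2^{\cardinal{\parti{P}}}$, is \emph{not} by itself sufficient for the lower bound, since $\cardinal{\parti{P}}$ may be finite (when \parti{P} has a block of size \cardone and only finitely many blocks), in which case $2^{\cardinal{\parti{P}}} < \cardone$; this is precisely why routing through Lemma~\ref{lem:oneblock_high} in the second case is what secures the clean bound $\geq \cardone$.
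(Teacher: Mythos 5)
Your proof is correct and follows essentially the same route as the paper's: the upper bound from $\cardinal{\thelattice{\cardone}} = 2^{\cardone}$, and the lower bound by the same case split on whether $\parti{P}$ has a block of cardinality $\cardone$, invoking Lemma~\ref{lem:noblock_high} and Lemma~\ref{lem:oneblock_high} respectively, with $\parti{P}\neq\top$ securing $\cardinal{\cardone\setsub\bar{B}}\ge 1$. Your closing remark on why Lemma~\ref{lem:general_condition_two} alone does not suffice is accurate but not needed.
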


\begin{proof}
  As $\cardinal{\thelattice{\cardone}} = 2^{\cardone}$, the upper
  bound is immediate, and it suffices to prove the lower bound.  There
  are two cases to consider: First, if $\parti{P}$ contains no block
  of cardinality $\cardone$, Lemma \ref{lem:noblock_high} yields
  $\cardinal{\compl{\parti{P}}} = 2^\cardone $. Otherwise, if
  $\parti{P}$ does contain a block $B$ of cardinality
  $\cardone$, Lemma \ref{lem:oneblock_high} implies
  $\cardinal{\compl{\parti{P}}} =
  \cardone^{\cardinal{\cardone\setsub B}}$.
  Because $\parti{P}\ne\top$, we have
  $\cardinal{\cardone\setsub B}≥ 1$, whereby
  $\cardinal{\compl{\parti{P}}} ≥ \cardone$.  In both cases,
  $\cardinal{\compl{\parti{P}}} ≥ \cardone$.
\end{proof}
%\clearpage
We can now state our main theorem:

\begin{theorem}\label{thm:number_of_complements}
  Let $\cardone$ be infinite and $\parti{P}\notin \{\bot,\top\}$
  be a partition in $\thelattice{\cardone}$. Then
  \begin{enumerate}
  \item $\cardone ≤ \cardinal{\compl{\parti{P}}} ≤
    2^{\cardone}$.
  \item $\cardinal{\compl{\parti{P}}} ≥ 2^{\cardinal{\parti{P}}}$.
  \item If $\parti{P}$ contains no block of cardinality $\cardone$, then
    $\cardinal{\compl{\parti{P}}} = 2^{\cardone}$.
  \item If $\parti{P}$ contains a block $B$ of cardinality
    $\cardone$, then
    $\cardinal{\compl{\parti{P}}} =
    \cardone^{\cardinal{\cardone\setsub B}}$.
  \item If $\parti{P}$ contains two or more blocks of cardinality
    $\cardone$, then $\cardinal{\compl{\parti{P}}} = 2^\cardone$.
  \end{enumerate}
\end{theorem}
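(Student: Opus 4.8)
The plan is to recognize that Theorem~\ref{thm:number_of_complements} consolidates results already established in this section, so that each of its five parts follows immediately by citing the corresponding lemma or corollary; the proof is therefore an assembly rather than a new argument. First I would confirm that every individual claim has indeed been proved in isolation, and then point to the relevant result for each item.

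Concretely, I would match the parts as follows. Part~(1) is precisely Lemma~\ref{lem:general_condition_one}, giving $\cardone \le \cardinal{\compl{\parti{P}}} \le 2^{\cardone}$. Part~(2) is Lemma~\ref{lem:general_condition_two}. Part~(3) is Lemma~\ref{lem:noblock_high}. Part~(4) is Lemma~\ref{lem:oneblock_high}, whose hypothesis covers any partition possessing a block of cardinality \cardone and hence a fortiori those with $\parti{P}\notin\{\bot,\top\}$. Part~(5) is Corollary~\ref{cor:high_obvious}.

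Before committing this to paper I would verify that the five parts are mutually consistent and that the case split is exhaustive: parts~(3),~(4), and~(5) partition the possibilities according to whether \parti{P} has zero, exactly one, or at least two blocks of cardinality \cardone. In particular~(5) is a special case of~(4), since deleting one block of cardinality \cardone from a partition possessing a second such block leaves $\cardinal{\cardone\setsub\bar{B}} = \cardone$, so that $\cardone^{\cardinal{\cardone\setsub\bar{B}}} = \cardone^{\cardone} = 2^{\cardone}$; this is exactly the computation performed in the corollary, and I would remark that~(5) is retained separately only for convenience. Since no further mathematical content is required, there is no genuine obstacle at this stage: all the substantive work lives in the preceding lemmas, most notably the singular-cardinal construction of Lemma~\ref{lem:noblock_high} and the matching upper and lower bounds of Lemma~\ref{lem:oneblock_high}.
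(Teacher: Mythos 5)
Your proposal is correct and matches the paper's own proof exactly: the theorem is proved by citing Lemmas \ref{lem:general_condition_one}, \ref{lem:general_condition_two}, \ref{lem:noblock_high}, \ref{lem:oneblock_high} and Corollary \ref{cor:high_obvious} for parts (1)--(5) respectively, and your observation that (5) follows from (4) via $\cardone^{\cardone} = 2^{\cardone}$ is precisely the computation in that corollary.
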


\begin{proof}
  (1)--(4) are Lemmas \ref{lem:general_condition_one},
  \ref{lem:general_condition_two}, \ref{lem:noblock_high} and
  \ref{lem:oneblock_high}, respectively.  (5) is Corollary
  \ref{cor:compl_largeblocks}.
\end{proof}

A consequence of Theorem \ref{thm:number_of_complements} is that
partitions with fewer complements than $2^{\cardone}$ must always have
exactly one large block, and a sufficiently small number of elements
remaining after removing it:
\begin{corollary}
  If $\cardinal{\compl{\parti{P}}} < 2^\cardone$, then $\parti{P}$
  contains exactly one block $B$ of size $\cardone$, and
  $\cardinal{\cardone\setsub B}<\cardone$.
\end{corollary}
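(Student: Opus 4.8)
The plan is to read this off as the contrapositive of the case analysis already carried out in Theorem~\ref{thm:number_of_complements}. Assuming $\cardinal{\compl{\parti{P}}} < 2^{\cardone}$, I would first invoke part (3): if \parti{P} contained no block of cardinality \cardone, then $\cardinal{\compl{\parti{P}}}$ would equal $2^{\cardone}$, contradicting the hypothesis. Hence \parti{P} contains at least one block of cardinality \cardone. Next I would apply part (5) in the same contrapositive fashion: two or more blocks of cardinality \cardone would again force $\cardinal{\compl{\parti{P}}} = 2^{\cardone}$. Together these two observations pin down that \parti{P} has exactly one block $\bar{B}$ of cardinality \cardone.

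It then remains to show $\cardinal{\cardone\setsub\bar{B}} < \cardone$. For this I would use part (4), which gives $\cardinal{\compl{\parti{P}}} = \cardone^{\cardinal{\cardone\setsub\bar{B}}}$. Writing $\cardtwo = \cardinal{\cardone\setsub\bar{B}}$ and noting $\cardtwo \le \cardone$ since $\cardone\setsub\bar{B}\subseteq\cardone$, I would argue by contradiction: if $\cardtwo = \cardone$, then $\cardinal{\compl{\parti{P}}} = \cardone^{\cardone} = 2^{\cardone}$, using the standard identity $\cardone^{\cardone} = 2^{\cardone}$ for infinite \cardone (which follows from $2^{\cardone} \le \cardone^{\cardone} \le (2^{\cardone})^{\cardone} = 2^{\cardone}$), once more contradicting the hypothesis. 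Hence $\cardtwo < \cardone$, as claimed.

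There is essentially no obstacle here: the corollary is a direct bookkeeping consequence of the five cases already established. The only point requiring a moment's care is the infinite-cardinal identity $\cardone^{\cardone} = 2^{\cardone}$ invoked to rule out the possibility $\cardtwo = \cardone$.
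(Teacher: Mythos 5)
Your proof is correct and is exactly the argument the paper intends: the paper states this corollary without proof as an immediate consequence of Theorem~\ref{thm:number_of_complements}, and your contrapositive reading of parts (3), (5), and (4), together with the standard identity $\cardone^{\cardone} = 2^{\cardone}$ (which the paper itself invokes elsewhere), is precisely that bookkeeping.
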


For any infinite cardinal $\cardone$, both $\cardone$ and
$2^{\cardone}$ can be realized as $\cardinal{\compl{\parti{P}} }$ for
some partition $\parti{P}$.  In fact, Theorem
\ref{thm:number_of_complements} provides a complete characterization
of the cardinals that can be realized as complement counts: it is
precisely those cardinals of the form $\cardone^\cardtwo$, with
$0≤ \cardtwo≤ \cardone$ (and $1≤ \cardtwo ≤ \cardone$ when considering
only non-trivial partitions), as the following corollary shows:

\begin{corollary}\label{cor:compl-existence}
  For any infinite cardinality $\cardone$, and every cardinal $0 ≤
  \cardtwo ≤ \cardone$, there is a partition $\parti{P}_\cardtwo$ of
  $\cardone$ for which $\cardinal{\compl{\parti{P}_\cardtwo}} =
  \cardone^\cardtwo$.

  In particular, there exist partitions $\parti{P}$ and $\parti{R}$ in
  $\thelattice{\cardone}$ with $\cardinal{\compl{\parti{P}}} =
  \cardone$, respectively $\cardinal{\compl{\parti{R}}} =
  2^{\cardone}$.

  No cardinal that is not of the form $\cardone^\cardtwo$ can be
  realised as $\cardinal{\compl{\parti{P}}}$.
\end{corollary}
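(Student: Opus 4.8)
The plan is to reduce everything to Lemma~\ref{lem:oneblock_high} and Theorem~\ref{thm:number_of_complements}, so that essentially no new work is needed beyond one uniform construction and two elementary cardinal-arithmetic identities.

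First I would establish existence. Given $\cardtwo$ with $0 \le \cardtwo \le \cardone$, the goal is a partition having a block of size $\cardone$ whose complement in $\cardone$ has size exactly $\cardtwo$, since Lemma~\ref{lem:oneblock_high} then immediately yields $\cardone^\cardtwo$ complements. The key observation is that for infinite $\cardone$ and any $\cardtwo \le \cardone$ we have $\cardone + \cardtwo = \cardone$, so $\cardone$ can be written as a disjoint union $\bar{B} \cup R$ with $\cardinal{\bar B} = \cardone$ and $\cardinal{R} = \cardtwo$. Setting $\parti{P}_\cardtwo = \{\bar B\} \cup \setof{\{r\}}{r \in R}$ produces a partition whose unique block of size $\cardone$ is $\bar B$ (the remaining blocks are singletons), with $\cardinal{\cardone \setsub \bar B} = \cardtwo$; Lemma~\ref{lem:oneblock_high} then gives $\cardinal{\compl{\parti{P}_\cardtwo}} = \cardone^\cardtwo$. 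This single construction covers every $\cardtwo$, including the degenerate endpoints: $\cardtwo = 0$ forces $R = \emptyset$ and $\parti{P}_0 = \top$ with $\cardone^0 = 1$ complement, while $\cardtwo = \cardone$ gives $\cardone^\cardone = 2^\cardone$ complements.

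The ``in particular'' clause then falls out by specialization: $\cardtwo = 1$ yields a partition with $\cardone^1 = \cardone$ complements (splitting off a single point), and $\cardtwo = \cardone$ yields one with $2^\cardone$ complements.

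For the converse, I would invoke Theorem~\ref{thm:number_of_complements} to exhaust all cases. For $\parti{P} \notin \{\bot,\top\}$, either $\parti{P}$ has no block of size $\cardone$, in which case part~(3) gives $2^\cardone$, or it has such a block $\bar B$, in which case part~(4) gives $\cardone^{\cardinal{\cardone\setsub\bar B}}$ with $\cardinal{\cardone\setsub\bar B} \le \cardone$; separately, each trivial partition $\bot$ and $\top$ has exactly one complement, namely $\cardone^0$. Every one of these counts has the form $\cardone^\cardtwo$ with $0 \le \cardtwo \le \cardone$, once we note the identity $2^\cardone = \cardone^\cardone$, which follows from $2^\cardone \le \cardone^\cardone \le (2^\cardone)^\cardone = 2^\cardone$. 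The only point demanding any care is checking that the case split in Theorem~\ref{thm:number_of_complements} is genuinely exhaustive and that the value $2^\cardone$ is recognized as $\cardone^\cardone$ rather than counted as a cardinal outside the claimed family; this is the closest thing to an obstacle, but it is entirely routine given the earlier results.
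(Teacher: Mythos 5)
Your proposal is correct and follows essentially the same route as the paper: realise $\cardone^\cardtwo$ via Theorem~\ref{thm:number_of_complements}(4) applied to a partition with one block of size $\cardone$ and co-size $\cardtwo$, and derive exhaustiveness from the case analysis of Theorem~\ref{thm:number_of_complements} together with $2^\cardone = \cardone^\cardone$ and $1 = \cardone^0$. The only (immaterial) difference is your choice of witness --- a singular partition with $\cardtwo$ singletons rather than the paper's doubleton $\{B,\bar{B}\}$ --- which has the mild advantage of covering $\cardtwo = 0$ uniformly.
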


\begin{proof}
  Theorem~\ref{thm:number_of_complements} fully describes the possible
  number of complements of non-trivial partition. Remembering that
  $\cardone^{\cardone} = 2^{\cardone}$ and that, for $\top$ and
  $\bot$, the number of complements is $1 = \cardone^0$, it is
  apparent that for any partition, the number of complements has the
  form $\cardone^{\cardtwo}$. Conversely, the cardinal
  $\cardone^\cardtwo$, $1≤ \cardtwo ≤ \cardone$, can be realised
  as the number of complements to the two-block partition
  $\{B, \bar{B}\}$ with $\cardinal{B} = \cardtwo$ and
  $\cardinal{\bar{B}} = \cardone$, using
  Theorem~\ref{thm:number_of_complements}(4).
\end{proof}

\subsection{Orthocomplements}
It is well-known that $\thelattice{\cardone}$ is relatively
complemented. To our knowledge, it has so far been unknown whether
there exists an $n > 2$ such that $\thelattice{n}$ is
orthocomplemented. We prove that for any cardinality $\cardone > 2$
(finite or transfinite), $\thelattice{\cardone}$ is \emph{not}
orthocomplemented.

\begin{proposition}[Orthocomplements]
  If $\cardone > 2$, then $\thelattice{\cardone}$ is not
  orthocomplemented.
\end{proposition}

\begin{proof}
  Orthocomplementation yields a bijection between atoms and
  co-atoms. Atoms are singular partitions whose non-singleton block is
  a pair, hence there are as many atoms as there are pairs, namely
  $\frac{\cardone \times (\cardone-1)}{2}$. Co-atoms are partitions
  with two blocks, a non-trivial subset of $\cardone$ and its
  complement, hence there are as many co-atoms as half the number of
  non-trivial subsets, namely $\frac{2^{\cardone}-2}{2}$. For
  $\cardone ≥ 3$, these numbers are different hence
  $\thelattice{\cardone}$ cannot be orthocomplemented. For
  $\cardone = 3$, one can easily check that $\thelattice{3}$ is not
  orthocomplemented either (because it contains an odd number of
  elements).
\end{proof}

\section{Results under GCH}
\label{sec:GCHresults}

Under the Generalized Continuum Hypothesis, the results from
the previous sections all simplify greatly, and it is possible
to obtain much stronger results.

\subsection{Maximal chains under GCH}
Under GCH, we can fully determine the possible cardinals for maximal chains in
$\thelattice{\cardone}$, as we will see in Theorem \ref{thm:GCHmaxchain} below.

\newcommand{\Pminab}{\parti{P}^-_{α, β}}

\begin{lemma} \label{lem:maxchainUB} Let $\cardone$ be an infinite
  cardinal, let $\chain{C}$ be a maximal chain in
  $\thelattice{\cardone}$. Given a partition $\parti{P} \in \chain{C}$,
  write $\parti{P}^+ = \glb\upperchain C {\parti{P}}$
  (cf. Definition~\ref{def:upper-lower}), and let
  \[
    \chain{C}^* =  \setof{\parti{P}\in\chain{C}}{ \parti{P} \prec \parti{P}^+}
  \]
  Then $\cardone ≤ 2^{|\chain{C}^*|}$.
\end{lemma}

\begin{proof}
  First note that, being maximal, $\chain{C}$ is end-point including,
  closed and covering by
  Lemma~\ref{lem:closed-covering-saturated}. Let
  $\Pminab = \lub \lowerchain{C}{\ordone, \ordtwo}$ (cf.
  Definition~\ref{def:xy-upper-lower}). By
  Lemma~\ref{lem:meet-join-chain}, we have
  $\Pminab \prec \glb \upperchain{C}{\ordone, \ordtwo}$ and by
  construction,
  $\upperchain{C}{\ordone, \ordtwo} = \upperchain{C}{\Pminab}$. Hence,
  $\Pminab \in \chain{C}^*$.

  We next construct a family of maps
  $\varphi_\ordone\colon \chain{C}^* \to \{0,1\}$ in the following
  way: Let $\parti{P} \in \chain{C}^*$. By definition, we have
  $\parti{P} \prec \parti{P}^+$, thus there exist an unique block
  $B_{\parti{P}}$ of $\parti{P}^+$ which is the union of two blocks of
  $\parti{P}$ (and all others are also blocks of $\parti{P}$). Choose,
  by axiom of choice, one of these two blocks as $B_{\parti{P},0}$.

  Now, for each $α \in \cardone$, we define
  $φ_α\colon \chain{C}^* \to \{0, 1\}$ by
  \[
  φ_α(\parti{P}) =
    \begin{cases}
      0 & \text{ if $α\in B_{\parti{P}, 0}$ }\\
      1 & \text{ otherwise } %($α$ in the other block or not in $B_{\parti{P}}$)}
    \end{cases}
  \]

  The previous construction shows that for any $α \neq β$, (i)
  $\Pminab \in \chain{C}^*$; (ii) $α$ and $β$ are precisely in
  $B_{\Pminab}$; and (iii) they are in different blocks of $\Pminab$,
  that is one of them is in $B_{\Pminab, 0}$ and the other is
  not. Hence, $φ_α(\Pminab) \neq φ_β(\Pminab)$ and consequently,
  $φ_α \neq φ_β$. Thus, the map $\ordone\mapsto \varphi_\ordone$ is
  injective from $\cardone$ to $\{0, 1\}^{\chain{C}^*}$, whereby
  $\cardone ≤ 2^{\cardinal{\chain{C}^*}}$.
\end{proof}

\begin{proposition}
  \label{prop:GCHchainLB}
  Let $\cardone$ be any infinite cardinal. Under GCH, any maximal
  chain $\chain{C}$ in $\thelattice{\cardone}$ has
  cardinality
  \[
  \card{\chain{C}} ≥
  \begin{cases}
    \cardone^- & \text{if $\cardone$ is a successor cardinal},\\
    \cardone   & \text{if $\cardone$ is a limit cardinal.}
  \end{cases}
  \]
\end{proposition}

\begin{proof}
  By Lemma \ref{lem:maxchainUB},
  $\cardone ≤ 2^{\cardinal{\chain{C}^*}} ≤ 2^{\cardinal{\chain{C}}}$
  for any maximal chain $\chain{C}$. Under GCH,
  $\cardinal{\chain{C}}^+ = 2^{\cardinal{\chain{C}}}$, whereby
  $\cardone ≤ \cardinal{\chain{C}}^+$.  Assume now that
  $\cardinal{\chain{C}} < \cardone$, i.e.~
  $\cardinal{\chain{C}} < \cardone ≤ \cardinal{\chain{C}}^+$.  By
  definition of the successor relation, this implies
  $\cardone = \cardinal{\chain{C}}^+$.

  If $\cardone$ is a limit cardinal, this is a contradiction, and so
  we must have $\cardinal{\chain{C}} ≥ \cardone$. If $\cardone$ is a
  successor cardinal, it follows that
  $\cardinal{\chain{C}} ≥ \cardone^-$.
\end{proof}

By combining the previous results, we can tightly bound the cardinal
of any maximal chain in $\thelattice{\cardone}$ under GCH.  In
addition, the following simple restriction of Theorems
\ref{thm:long-nochainreach} and \ref{thm:small_chain} to the case when
GCH is assumed, provides instances of long and short maximal chains
that realize the bounds established above.
\begin{corollary}
  \label{cor:GCH-long-and-short-chains}
  Let $\cardone$ be a infinite cardinal. Under GCH, there exists a
  maximal chain of length $\cardone^+$ in $\thelattice{\cardone}$; and
  there exists a chain of length $\cardone$ in
  $\thelattice{\cardone^+}$.
\end{corollary}

\begin{proof}
  If GCH is assumed, the first statement is an immediate consequence
  of Theorem \ref{thm:long-nochainreach}; and the second statement
  follows, because the precondition in Theorem \ref{thm:small_chain},
  $\cardtwo < \cardone$ only if $2^\cardtwo < 2^\cardone$, is always
  satisfied under GCH.
\end{proof}

\begin{theorem} \label{thm:GCHmaxchain} Let $\cardone$ be an infinite
  cardinal. Under GCH, the cardinality of any maximal chain in
  $\thelattice{\cardone}$ is:
    \begin{itemize}
    \item $\cardone^-$, $\cardone$, or $\cardone^+$ (and all three are
      always achieved) if $\cardone$ is a successor; and
    \item either $\cardone$ or $\cardone^+$ (and both are achieved) if
      $\cardone$ is a limit cardinal.
    \end{itemize}
\end{theorem}

\begin{proof}
  The lower bounds are given by Proposition \ref{prop:GCHchainLB}, and
  the upper bound is the cardinality of $\thelattice{\cardone}$ itself.

  Furthermore, each possible value is always realized: For any
  infinite cardinal $\cardone$, there always exists a well-founded
  maximal chain of cardinality $\cardone$. By Corollary
  \ref{cor:GCH-long-and-short-chains}, there also exists a chain of
  length $\cardone^+$, and for successor cardinals $\cardone$, a chain
  of length $\cardone^-$.
\end{proof}

The bounds can also be stated in a symmetrical fashion:
\begin{corollary}
  For any infinite cardinal $\cardone$ (successor or limit), the
  cardinality of any maximal chain lies between
  $\sup\{\cardtwo < \cardone\}$ and $\inf \{\cardtwo > \cardone\}$.
\end{corollary}

\subsection{Maximal antichains under GCH}

\begin{corollary}
  Under GCH, when $\cardone$ is an infinite cardinal, the length of
  any maximal antichain in $\thelattice{\cardone}$ is either
  $\cardone$ or $\cardone^+$, and both are realized.
\end{corollary}

\begin{proof}
  This is the content of Theorems \ref{thm:antichain_all_card},
  \ref{thm:antichain_tight_bounds}, together with the fact that
  $\cardinal{\thelattice{\cardone}} = 2^\cardone$, when the assumption
  $2^\cardone = \cardone^+$ is made.
\end{proof}

\subsection{Complements under GCH}

Under GCH, the number of complements $\cardinal{\compl{\parti{P}}}$
for partitions $\parti{P}\notin\{\top,\bot\}$ of an infinite cardinal
$\cardone$ is either $\cardone$ or $\cardone^+ = 2^\cardone$. In
addition, the simplified rules for arithmetic under GCH strengthen
Theorem \ref{thm:number_of_complements}:

\begin{theorem}
  \label{thm:GCHkappa_complements}
  Let $\cardone$ be an infinite cardinal, and
  $\parti{P} \notin \{\top,\bot\}$ be a partition of $\cardone$.
  Assuming GCH, then
  \[
  \cardinal{\compl{\parti{P}}} =
  \begin{cases}
    \cardone & \text{if and only if exactly one block $B
      \in \parti{P}$ has $\cardinal{B} = \cardone$,}\\
    & \text{and $\cardinal{\cardone \setsub B} <
      \cofinality{\cardone}$}\\
    2^\cardone & \text{otherwise}
  \end{cases}
  \]
\end{theorem}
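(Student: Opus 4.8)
The plan is to combine Theorem~\ref{thm:number_of_complements} with the GCH evaluation of cardinal powers of the form $\cardone^\cardtwo$. Theorem~\ref{thm:number_of_complements} already splits the non-trivial \parti{P} according to how many blocks have cardinality \cardone. When \parti{P} has no such block (part (3)) or two or more of them (part (5)), the count is $2^\cardone$ unconditionally; in both situations the hypothesis ``exactly one block of cardinality \cardone'' fails, so these land in the ``otherwise'' branch and agree with the claimed value. The only case that requires work is that \parti{P} has exactly one block $\bar{B}$ with $\cardinal{\bar{B}} = \cardone$, and here part (4) gives $\cardinal{\compl{\parti{P}}} = \cardone^{\cardtwo}$, where $\cardtwo = \cardinal{\cardone \setsub \bar{B}}$ satisfies $1 \le \cardtwo \le \cardone$ since $\parti{P} \ne \top$.

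Everything therefore reduces to evaluating $\cardone^\cardtwo$ under GCH for $1 \le \cardtwo \le \cardone$ and matching the result against the cofinality threshold. I would split on whether $\cardtwo < \cofinality{\cardone}$. If $\cardtwo < \cofinality{\cardone}$, I would first dispose of finite \cardtwo (where $\cardone^\cardtwo = \cardone$ is immediate) and then treat infinite \cardtwo: every map from \cardtwo into \cardone has range bounded strictly below \cardone, so the set of all such maps is the union over $\ordfour < \order{\cardone}$ of those with values below \ordfour. Bounding each piece by $\cardinal{\ordfour}^{\cardtwo} \le 2^{\max(\cardinal{\ordfour},\cardtwo)}$ and invoking GCH ($2^{\cardthree} = \cardthree^+$ for each $\cardthree < \cardone$) shows every piece has cardinality $\le \cardone$; hence Lemma~\ref{lem:basic_arith}(\ref{lem:arith:sum}) gives $\cardone^\cardtwo \le \cardone$, and the reverse inequality $\cardone^\cardtwo \ge \cardone$ is trivial, so $\cardone^\cardtwo = \cardone$. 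This is exactly the ``\cardone'' branch of the theorem.

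If instead $\cofinality{\cardone} \le \cardtwo \le \cardone$, I would show $\cardone^\cardtwo = 2^\cardone$. The upper bound follows from $\cardone^\cardtwo \le \cardone^\cardone = 2^\cardone$. For the matching lower bound, K{\"o}nig's Theorem gives $\cardone^{\cofinality{\cardone}} > \cardone$, hence $\cardone^{\cofinality{\cardone}} \ge \cardone^+$; since GCH yields $\cardone^{\cofinality{\cardone}} \le \cardone^\cardone = 2^\cardone = \cardone^+$, this forces $\cardone^{\cofinality{\cardone}} = 2^\cardone$, and monotonicity of the exponent gives $\cardone^\cardtwo \ge \cardone^{\cofinality{\cardone}} = 2^\cardone$. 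This case, together with the two cases covered by parts (3) and (5), exhausts the ``otherwise'' branch.

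I expect the main obstacle to be the first sub-case, $\cardtwo < \cofinality{\cardone}$: establishing $\cardone^\cardtwo = \cardone$ rests on the boundedness of short sequences together with a uniform appeal to GCH at every cardinal below \cardone, and it is precisely this argument that makes $\cofinality{\cardone}$---rather than \cardone itself---the correct threshold. The remaining steps are direct consequences of Theorem~\ref{thm:number_of_complements}, K{\"o}nig's Theorem, and the standard GCH exponentiation rules.
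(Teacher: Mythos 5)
Your proposal is correct and follows essentially the same route as the paper: both reduce everything to Theorem~\ref{thm:number_of_complements} and then evaluate $\cardone^\cardtwo$ under GCH against the threshold $\cofinality{\cardone}$, using K{\"o}nig's Theorem for the case $\cardtwo \ge \cofinality{\cardone}$. The only difference is that you supply the proofs of the GCH exponentiation rules (the bounded-range argument for $\cardtwo < \cofinality{\cardone}$ and the squeeze $\cardone^+ \le \cardone^{\cofinality{\cardone}} \le 2^\cardone = \cardone^+$), which the paper simply cites as standard.
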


\begin{proof}
  Consider the three cases:
  \begin{enumerate}
  \item First, if $\parti{P}$ contains either zero or at least two
    blocks of size $\cardone$, then Theorem
    \ref{thm:number_of_complements}(2) or
    \ref{thm:number_of_complements}(5) yields
    $\cardinal{\compl{\parti{P}}} = 2^\cardone$.
  \item Next, assume $\parti{P}$ contains exactly one block $B$ of
    size $\cardone$, and
    $\cardinal{\cardone\setsub B} ≥ \cofinality{\cardone}$. By Theorem
    \ref{thm:number_of_complements}(4),
    $\cardinal{\compl{\parti{P}}} = \cardone^{\vert \cardone \setsub B
      \vert} ≥ \cardone^{\cofinality{\cardone}} > \cardone$.
    GCH, together with $\cardinal{\compl{\parti{P}}} ≤ 2^\cardone$,
    then yields $\cardinal{\compl{\parti{P}}} = 2^\cardone$.
  \item Finally, if $\parti{P}$ contains exactly one block $B$
    of size $\cardone$ in $\parti{P}$, and
    $\cardinal{\cardone\setsub B} < \cofinality{\cardone}$. Under
    GCH, $\cardone^\cardtwo = \cardone$ if and only if
    $1 ≤ \cardtwo < \cofinality{\cardone}$.  Together with Theorem
    \ref{thm:number_of_complements}(4) and $\parti{P}\ne\top$, this
    yields $\cardinal{\compl{\parti{P}}} =
    \cardone^{\cardinal{\cardone\setsub B}} = \cardone$.\\[-2em]
  \end{enumerate}
\end{proof}

In Theorem \ref{thm:GCHkappa_complements}, the assumption of GCH is
necessary in the sense that it is easy to construct non-GCH examples
that violate the result.  For example, consider a model where
$2^{\aleph_1} = 2^{\aleph_2} = \aleph_3$, which is consistent with ZFC
by Easton's Theorem, and let $\cardone = \aleph_2$. Consider a
partition $\parti{P} = \{B,B'\}$, where
$\cardinal{B} = \aleph_2$, and
$\cardinal{B'} = \aleph_1 < \cofinality{\aleph_2}$. Then
$\cardinal{\cardone\setsub B} = \cardinal{B'} = \aleph_1$, so
\ref{thm:number_of_complements}(4) yields
$\cardinal{\compl{\parti{P}}} = \aleph_2^{\aleph_1} ≥
2^{\aleph_1}$.
But in this model,
$2^{\aleph_1} = 2^{\aleph_2} = 2^\cardone > \cardone$, despite
$\cardinal{\cardone\setsub B} < \cofinality{\cardone}$.

However, the result does hold for some classes of cardinals,
regardless of whether GCH is assumed.  In particular, whenever
$\cardthree^\cardtwo < \cardone$ for all $\cardthree < \cardone$ and
all $\cardtwo < \cofinality{\cardone}$.  This is for example the case
when $\cardone$ is a strong limit cardinal.

\subsection{Gathering all results under GCH}

Collecting the results of this section gives us the
following concise characterization of chains, antichains,
and complements in infinite partition lattices under GCH:
\begin{theorem}
  \label{cor:allGCH}
  Under GCH, when $\cardone$ is an infinite cardinal:
  \begin{enumerate}
  \item Any maximal well-founded chain in $\thelattice{\cardone}$
    always has cardinality $\cardone$.
  \item Any general maximal chain in $\thelattice{\cardone}$ has
    cardinality
    \begin{enumerate}
    \item $\cardone^-$, $\cardone$, or $\cardone^+$ (and all three are
      always achieved) if $\cardone$ is a successor cardinal; and
    \item either $\cardone$ or $\cardone^+$ (and both are achieved) if
      $\cardone$ is a limit cardinal.
    \end{enumerate}
  \item Any non-trivial maximal antichain in $\thelattice{\cardone}$
    has cardinality either $\cardone$ or $\cardone^+$, and both are
    achieved.
  \item Any non-trivial partition has either $\cardone$ or
    $\cardone^+$ complements.  $\parti{P}\notin\{\bot,\top\}$ has
    $\cardone$ complements if and only if (i) $\parti{P}$ contains
    exactly one block, $B$, of cardinality $\cardone$, and (ii)
    $\cardinal{\cardone\setsub B} < \cofinality{\cardone}$; otherwise,
    $\parti{P}$ has $\cardone^+$ complements.
  \end{enumerate}
\end{theorem}

\section{Acknowledgments}
The authors thank Beno\^{\i}t Kloeckner for his many comments on a
preliminary version of this article, and thank the anonymous
referees for valuable suggestions.
James Avery was supported by VILLUM FONDEN through the network for
Experimental Mathematics in Number Theory, Operator Algebras, and
Topology.
Jean-Yves Moyen was partially supported by the ANR project ``Elica''
ANR-14-CE25-0005 and by the Marie Sk\l{}odowska--Curie action
  ``Walgo'', program H2020-MSCA-IF-2014, number 655222.
Pavel Ruzicka was partially supported by the Grant Agency of the Czech
Republic under the grant no. GACR 14-15479S.
Jakob Grue Simonsen was partially supported by the Danish Council for
Independent Research \emph{Sapere Aude} grant ``Complexity through Logic
and Algebra'' (COLA).
\bibliographystyle{alpha}
\bibliography{bibmacros,bibjym,bibliographie}

\end{document}